\documentclass[11pt]{article}
\usepackage{amsmath,amsthm,amsfonts,amssymb,amscd,amsxtra}
\usepackage{algorithmic} 
\usepackage{algorithm}
\usepackage{url}
\usepackage{graphicx}
\usepackage{subcaption}
\usepackage{xcolor}
\usepackage{cite}
\usepackage{booktabs}
\usepackage[margin=2.5cm,nohead]{geometry}
\theoremstyle{plain}
\newtheorem{theorem}{Theorem}
\newtheorem{lemma}{Lemma}
\newtheorem{corollary}{Corollary}
\newtheorem{proposition}{Proposition}

\theoremstyle{definition}
\newtheorem{definition}{Definition}

\newtheorem{remark}{Remark}
\newtheorem{example}{Example}
\newtheorem{assumption}{Assumption}

\DeclareMathOperator{\diag}{diag}
\DeclareMathOperator{\sgn}{sgn}

\newcommand{\authorinfo}[2]{%
  \thanks{#1. Email: \protect\url{#2}.}%
}

\graphicspath{{figs/}{figs_revised/}}

\begin{document}

\title{A Safeguarded Projected-Gradient Framework for Complementarity Constrained Least Squares Problems}

\author{%
Lianghai Xiao\authorinfo{College of Information Science and Technology,
Jinan University}{xiaolh@jnu.edu.cn}
\and Wei Zhang\authorinfo{Corresponding Author, School of Mathematics, South China University of Technology}{zw2020@scut.edu.cn}
\and Jiayi Zhong\authorinfo{College of Information Science and Technology,
Jinan University}{zjyeexo@stu2024.jnu.edu.cn}}

\date{}
\maketitle
\begin{abstract} 
Both generalized absolute value equations (GAVEs) and linear complementarity
problems (LCPs) can  be formulated as least-squares problems over the complementarity
set. 
Because this feasible set is nonconvex, 
projected stationarity does not in general imply zero residual. We develop
a safeguarded projected-gradient framework with a linear-system refinement
on a selected polyhedral face. 
We establish matrix conditions under which every
projected stationary point is a global solution and derive corresponding
global convergence results.
Specifically, when refinement on a correct active face returns a solution, we give
explicit iteration bounds for active-face identification and prove finite
termination. 
Numerical experiments on
GAVE and LCP benchmarks demonstrate the high accuracy of the proposed
framework across the tested settings.
\end{abstract}
\noindent
\section{Introduction} 

Complementarity systems arise in equilibrium models, constrained
optimization, and piecewise-linear equations
\cite{cottle1992linear,facchinei2003finite,hladik2026overview}. Their defining disjunction---at
most one member of each complementary pair can be positive---provides useful
structure but also makes the feasible set nonconvex. To alleviate this difficulty, most existing methods try to reformulate the complementary structure. Here, we directly consider the
complementarity-constrained least-squares problem
\begin{equation}\label{eq:general_model}
    \min_{(u,v)\in\mathcal C}
    \Phi(u,v)
    :=
    \frac12\|Pu+Qv-c\|^2,
\end{equation}
where $P,Q\in\mathbb R^{m\times n}$, $c\in\mathbb R^m$, and
\[
    \mathcal C
    :=
    \left\{
    (u,v)\in\mathbb R^n\times\mathbb R^n:
    u\geq0,\;
    v\geq0,\;
    u\odot v=0
    \right\},
\]
with $\odot$ denoting the Hadamard product. 
The zero-level set of \eqref{eq:general_model} consists precisely of the
complementary solutions of the linear system
\[
    Pu+Qv=c,
    \qquad
    u\geq0,\quad v\geq0,\quad u\odot v=0.
\]
The following two instances motivate our development.

\begin{example}[GAVE]\label{ex:GAVE}
Consider the generalized absolute value equation
\begin{equation}\label{eq:gave}
    Ax-B|x|=b,
    \qquad x\in\mathbb R^n,
\end{equation}
where $A,B\in\mathbb R^{m\times n}$ and $b\in\mathbb R^m$. It is represented by \eqref{eq:general_model} with
\[
    u=x^+,\qquad
    v=x^-,
    \qquad
    P=A-B,\qquad
    Q=-A-B,\qquad
    c=b.
\]
A zero-residual feasible point therefore yields a solution
$x=x^+-x^-$ of \eqref{eq:gave}.
\end{example}

\begin{example}[LCP]\label{ex:LCP}
Consider the linear complementarity problem
\begin{equation}
\label{eq:lcp}
    z\geq0,\qquad
    w=Mz-b\geq0,\qquad
    z\odot w=0,
\end{equation}
where $M\in\mathbb R^{n\times n}$ and $b\in\mathbb R^n$. It can be written
as an instance of \eqref{eq:general_model} by setting
\[
    u=z,\qquad
    v=w,\qquad
    P=M,\qquad
    Q=-I,\qquad
    c=b.
\] 
\end{example}

\subsection{Related work}

GAVEs and LCPs are closely related classes of piecewise-linear systems.
The connection between absolute value equations and complementarity
problems is developed in \cite{mangasarian2006absolute}, while the theory
and classical algorithms for LCPs are treated systematically in
\cite{cottle1992linear}; see also the recent AVE survey
\cite{hladik2026overview}. Despite their close relationship, the numerical
literatures on GAVEs and LCPs often emphasize different algorithmic tools.

For AVEs and GAVEs, a major line of research applies generalized,
semismooth, or smoothing Newton methods to the piecewise-linear equation
\cite{mangasarian2009generalized,bello2016global,chen2025nonmonotone}.
Picard iterations and matrix-splitting schemes instead use fixed-point or
relaxed linear solvers \cite{rohn2014iterative,zhou2021newton}. Optimization
reformulations have also led to successive linearization, complementarity
smoothing, and gradient-type methods
\cite{mangasarian2007absolute,abdallah2018solving,yang2023modified}. These
approaches are effective under suitable regularity, contractivity, or
unique-solvability conditions. Many of them are formulated for square systems,
whereas alternating-projection and randomized-sketching methods explicitly
accommodate rectangular GAVEs
\cite{alcantara2023method,xie2025randomized}.
For the square AVE $Ax-|x|=b$, finite convergence of generalized Newton
schemes has also been established under conditions tied to the signed
linearizations, including interval regularity of $[A-I,A+I]$
\cite{zhang2009global} and
conditions ensuring its convergence \cite{guo2025comments}.
Such results are tailored to the equation reformulation and do not directly
extend to a general GAVE or to a projected method on the complementarity set.

LCP algorithms include pivoting methods, projected stationary iterations,
and equation-based methods; comprehensive accounts are given in
\cite{cottle1992linear,facchinei2003finite}. Projected Gauss--Seidel and
projected SOR exploit matrix structure and are classical choices for
structured LCPs \cite{cryer1971solution,cottle1992linear}. More generally,
first-order methods are typically analyzed through asymptotic convergence. Still,
there are specialized finite-termination results: for example, a conjugate
gradient method finitely reaches the exact solution for LCPs with a symmetric M-matrix
\cite{li2008conjugate}, while projection-type methods for variational
inequalities can converge finitely under additional geometric conditions such
as weak sharpness \cite{alhomidan2017finite}. 
Neither directly provides an active-face refinement
for the nonconvex least-squares model \eqref{eq:general_model}.

Another influential approach replaces complementarity by a nonsmooth or
smoothed equation, using, for example, the minimum or Fischer--Burmeister
function, and then applies a semismooth or smoothing Newton method
\cite{fischer1992special,kanzow1996global,qi2000newlook}. In addition to
asymptotic superlinear or quadratic convergence, finite termination has been
established for particular Newton-type LCP algorithms. Fischer and Kanzow
proved a local one-step termination property for a nonsmooth Newton-type
iteration and used it to establish finite termination of an existing LCP algorithm
\cite{fischer1996finite}; finite termination was also proved for a damped
Newton algorithm \cite{sun1998finite}. For LCPs with a sufficient matrix, Sun
and Huang developed a smoothing Newton algorithm that finitely reaches a
maximally complementary solution \cite{sun2006smoothing}. These results rely
on the specific equation reformulation and its regularity or matrix
assumptions.

Projection methods provide a closer link between the two literatures.
Alcantara et al. reformulated a general GAVE as a nonconvex
feasibility problem and applied alternating projections
\cite{alcantara2023method}. Alcantara and Lee subsequently developed a
union-convex analysis for several projection methods, proving convergence
to the solution set for P-matrix LCPs and introducing acceleration schemes
based on the active union component \cite{alcantara2025global}. These results
highlight the usefulness of the union-convex structure. Their methods are
built on projections associated with particular feasibility reformulations,
and their analysis is formulated in terms of the corresponding sets and
fixed-point mappings. To the best of our knowledge, no existing framework provides both an
explicit iteration bound for active-face identification and a finite-
termination guarantee.

Motivated by these developments, we reformulate both GAVEs and LCPs as
linear least-squares problems over the complementarity set and develop a
common projected-gradient framework for solving them. The projected-gradient
iteration provides globalization, while a refinement step solves the linear
system associated with a selected polyhedral face. Under explicit conditions,
the refinement produces a zero-residual solution once an appropriate face is
selected, thereby yielding finite termination.

\subsection{Contributions}

The main contributions are as follows.
\begin{itemize}
    \item We develop a safeguarded projected-gradient framework for GAVEs
    and LCPs that combines a common first-order iteration with a safeguarded
    linear-system refinement on a selected polyhedral face.

    \item We establish verifiable conditions under which every projected
    stationary point has zero residual and is therefore a global solution.
    We further establish global convergence to the zero-residual solution set,
    entire-sequence convergence in unique-solution settings, and linear
    convergence to the solution set under error-bound conditions.

    \item We quantify active-face identification and finite termination.
    In particular, we derive explicit iteration bounds after which the faces
    containing the projected-gradient points intersect the
    solution set. When the selected-face linear system returns a solution on
    such a face, we further derive an explicit iteration bound within which the safeguarded
    refinement reaches a zero-residual solution.
\end{itemize}
\subsection{Organization}

The remainder of this paper is organized as follows.
Section~\ref{sec:preliminaries} introduces the complementarity projection,
stationarity measure, and active-face notation.
Section~\ref{sec:algorithmic_framework} presents the safeguarded
projected-gradient framework. Section~\ref{sec:convergence} gives conditions
under which projected stationary points solve \eqref{eq:general_model} and
develops the global convergence analysis. Moreover, using Hoffman error 
bounds, it further derives linear
convergence rates and quantitative bounds for active-face identification and
finite termination.
Section~\ref{sec:numerics} describes the implementation and numerical
experiments. Section~\ref{sec:conclusion} concludes the paper.
\section{Preliminaries}\label{sec:preliminaries}
Throughout the paper, $\|\cdot\|$ denotes the Euclidean norm for vectors and the
spectral norm for matrices. For $W\in\mathbb R^{m\times n}$,
$\sigma_{\min}(W)$ denotes the smallest of its $\min\{m,n\}$ singular values,
and $\operatorname{Range}(W)$ denotes its column space. For a nonempty set
$\Omega$, define
\(
    \operatorname{dist}(y,\Omega):=\inf_{z\in\Omega}\|y-z\|.
\)
For two nonempty sets $\Omega_1$ and $\Omega_2$, define
\(
    \operatorname{dist}(\Omega_1,\Omega_2)
    :=\inf\{\|z_1-z_2\|:z_1\in\Omega_1,\ z_2\in\Omega_2\}.
\)
If $\Omega$ is also closed, let
\(
    \Pi_\Omega(y):=\operatorname*{argmin}_{z\in\Omega}\|y-z\|
\)
denote its Euclidean projection; this mapping may be set-valued when
$\Omega$ is nonconvex.  The symbol $I$ denotes an identity matrix whose
dimension is determined by context. We use $\mathbb N:=\{0,1,2,\ldots\}$.
For an index set $\mathcal I$, $W_{:,\mathcal I}$ denotes the submatrix formed
by the columns indexed by $\mathcal I$, while $W_{\mathcal I\mathcal I}$
denotes the corresponding principal submatrix when $W$ is square.
For $t\in\mathbb R$, let $t^+:=\max\{t,0\}$,
$t^-:=\max\{-t,0\}$, and let $\operatorname{sgn}(t)$ equal $1$, $0$, or
$-1$ according as $t>0$, $t=0$, or $t<0$, respectively. Vector operations
and inequalities are understood componentwise; in particular,
$x=x^+-x^-$ and $|x|=x^++x^-$.
For $y=(u,v)\in\mathcal C$, define
\[
\begin{aligned}
    \mathcal I_{+0}(y)&:=\{i:u_i>0,\ v_i=0\},\\
    \mathcal I_{0+}(y)&:=\{i:u_i=0,\ v_i>0\},\\
    \mathcal I_{00}(y)&:=\{i:u_i=v_i=0\}.
\end{aligned}
\]
Indices in $\mathcal I_{00}(y)$ are called \emph{biactive}.
\begin{definition}
The point $y$ is \emph{strictly complementary} if the biactive set
$\mathcal I_{00}(y)=\varnothing$.
\end{definition}
For a point $y=(u,v)\in\mathcal C$, define
\[
    \vartheta_i(y)
    :=\sgn(u_i-v_i)
    =
    \begin{cases}
        1,&u_i>0,\\
       -1,&v_i>0,\\
        0,&u_i=v_i=0,
    \end{cases}
    \qquad i=1,\ldots,n.
\]
Thus, $\vartheta_i(y)=0$ if and only if
$i\in\mathcal I_{00}(y)$.
For a strictly complementary point $y$, define the active matrix
\(
    \mathcal H(y):=
    [h_1(y) ~\cdots ~h_n(y)],
\)
where
\(
    h_i(y):=
    \begin{cases}
        P_{:i},&i\in\mathcal I_{+0}(y),\\
        Q_{:i},&i\in\mathcal I_{0+}(y).
    \end{cases}
\).
To select one face at a biactive point, assign each biactive coordinate to
the first axis and define the face-selection vector
\[
    \chi_i(y)
    :=
    \begin{cases}
        1,&u_i>0\ \text{or}\ u_i=v_i=0,\\
       -1,&v_i>0.
    \end{cases}
\]
The deterministic active face selected at $y$ is
\[
    F(y)
    :=
    \left\{(\widetilde u,\widetilde v)\in\mathcal C:
    \widetilde v_i=0\ \text{if }\chi_i(y)=1,\quad
    \widetilde u_i=0\ \text{if }\chi_i(y)=-1
    \right\}.
\]
More explicitly, for each $s\in\{-1,1\}^n$, define
\[
    F_s
    :=
    \left\{(\widetilde u,\widetilde v)\in\mathcal C:
    \widetilde v_i=0\ \text{if }s_i=1,\quad
    \widetilde u_i=0\ \text{if }s_i=-1
    \right\},
\]
and let
\begin{equation}
\label{eq:finite_face_collection}
    \mathfrak F:=\{F_s:s\in\{-1,1\}^n\}.
\end{equation}
Thus $\mathfrak F$ is a finite collection of $2^n$ polyhedral cones and
$F(y)=F_{\chi(y)}\in\mathfrak F$.  If $y$ is strictly complementary, then
$F(y)$ is the unique face selected by its nonzero components; the
deterministic rule only resolves the face selection at biactive indices.
For $s\in\{-1,1\}^n$, define
\(
    D_s:=\operatorname{diag}(s)
\) and
\(
    \Theta_s:=\tfrac12(I+D_s).
\)
Then
\begin{equation}
\label{eq:face_signed_parameterization}
    F_s
    =
    \left\{
    \bigl(\Theta_s\xi,-(I-\Theta_s)\xi\bigr):
    D_s\xi\geq0
    \right\}.
\end{equation}
Write $r(y):=Pu+Qv-c$ and
$\mathcal S:=\{y\in\mathcal C:r(y)=0\}$. When
$\mathcal S\neq\varnothing$, we have
\(
    \mathcal S=\underset{y\in\mathcal C}{\operatorname{argmin}}\,\Phi(y),
\) and 
\(
    \min_{y\in\mathcal C}\Phi(y)=0.
\)
For $a\leq b$, let
$\mathcal D_{[a,b]}:=\{\diag(d):d\in[a,b]^n\}$. For
\(\Theta\in\mathcal D_{[0,1]}\), define the matrix
\begin{equation}
\label{eq:matrix_family}
    C_\Theta:=P\Theta-Q(I-\Theta),
\end{equation}
and the uniform matrix modulus
\[
    \mu_C:=\inf_{\Theta\in\mathcal D_{[0,1]}}\sigma_{\min}(C_\Theta).
\]
When $m\leq n$, the condition $\mu_C>0$ means that every $C_\Theta$ has
full row rank, uniformly in $\Theta$; when $m\geq n$, it means uniform full
column rank.
Consequently, if $y\in F_s$ has the representation in
\eqref{eq:face_signed_parameterization}, then
\begin{equation}
\label{eq:face_residual_representation}
    r(y)=C_{\Theta_s}\xi-c.
\end{equation}
A matrix $M\in\mathbb R^{n\times n}$ is a \emph{P-matrix} if all its principal
minors are positive. Equivalently, every nonzero $p\in\mathbb R^n$ has an
index $i$ such that $p_i(Mp)_i>0$; see \cite{cottle1992linear}.

Let $\Pi_{\mathcal C}$ denote the Euclidean projection onto $\mathcal C$. Although $\mathcal C$ is nonconvex, it is the Cartesian
product of $n$ unions of two nonnegative coordinate axes. Hence, its Euclidean
projection is available in closed form.

\begin{proposition}
For $(\xi,\eta)\in\mathbb R^n\times\mathbb R^n$, a projection onto
$\mathcal C$ is obtained coordinatewise by setting
\begin{equation}\label{eq:scalar_projection_rule}
    (u_i,v_i)=
    \begin{cases}
        (\xi_i^+,0),&\xi_i^+\geq\eta_i^+,\\[1mm]
        (0,\eta_i^+),&\eta_i^+>\xi_i^+,
    \end{cases}
    \qquad i=1,\ldots,n.
\end{equation}
Then $(u,v)\in\Pi_{\mathcal C}(\xi,\eta)$.
\end{proposition}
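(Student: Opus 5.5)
The plan is to use the product structure of $\mathcal C$ to split the projection problem into $n$ independent two-dimensional problems. Write $\mathcal C=\prod_{i=1}^n \mathcal C_i$, where
\[
\mathcal C_i:=\{(a,b)\in\mathbb R^2:a\geq0,\ b\geq0,\ ab=0\}=(\mathbb R_+\times\{0\})\cup(\{0\}\times\mathbb R_+)
\]
(after reordering coordinates into pairs $(u_i,v_i)$). Since
\[
\|(\xi,\eta)-(u,v)\|^2=\sum_{i=1}^n\bigl[(\xi_i-u_i)^2+(\eta_i-v_i)^2\bigr]
\]
and the constraints decouple across $i$, a point $(u,v)\in\mathcal C$ minimizes the distance if and only if each pair $(u_i,v_i)$ minimizes $(\xi_i-a)^2+(\eta_i-b)^2$ over $\mathcal C_i$. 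It therefore suffices to show that the rule \eqref{eq:scalar_projection_rule} gives a minimizer of this scalar problem for each $i$.

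For the scalar problem, take $(\xi_i,\eta_i)=(s,t)$. The set $\mathcal C_i$ is the union of two closed convex rays, so the minimum over $\mathcal C_i$ is the smaller of the minima over the two rays.

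On the ray $A=\mathbb R_+\times\{0\}$, the projection is $(s^+,0)$, and the squared distance is $(s-s^+)^2+t^2=(s^-)^2+t^2$.

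On the ray $B=\{0\}\times\mathbb R_+$, the projection is $(0,t^+)$, and the squared distance is $s^2+(t^-)^2$.

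Using $s^2=(s^+)^2+(s^-)^2$ and $t^2=(t^+)^2+(t^-)^2$, the difference of the two squared distances is
\[
\bigl[(s^-)^2+t^2\bigr]-\bigl[s^2+(t^-)^2\bigr]=(t^+)^2-(s^+)^2.
\]
This difference is $\leq0$ exactly when $s^+\geq t^+$. Hence $(s^+,0)$ is a nearest point whenever $\xi_i^+\geq\eta_i^+$, and $(0,t^+)$ is a nearest point otherwise, which is precisely \eqref{eq:scalar_projection_rule}. In the tie case both candidates are optimal, which is why the statement only asserts membership in $\Pi_{\mathcal C}(\xi,\eta)$ rather than equality.

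Finally, I will check that the constructed $(u,v)$ lies in $\mathcal C$: each pair $(u_i,v_i)$ is nonnegative with at least one entry zero. Combining the coordinatewise optimality with the decoupling gives $(u,v)\in\Pi_{\mathcal C}(\xi,\eta)$.

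The only delicate step is justifying the decoupling, namely that minimizing a separable sum over a Cartesian product can be done termwise. This is immediate because any choice of pairwise minimizers is feasible and each term is bounded below by its own minimum. I do not expect a genuine obstacle; the main care needed is the sign bookkeeping in the distance comparison.
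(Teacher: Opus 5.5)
Your proof is correct and takes the same route as the paper: both compare the two axis candidates $(\xi_i^+,0)$ and $(0,\eta_i^+)$ coordinatewise, and both use the fact that their squared distances differ by $(\eta_i^+)^2-(\xi_i^+)^2$. Your explicit justification that the problem decouples over the Cartesian product is a detail the paper leaves implicit, but the argument is otherwise identical.
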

\begin{proof}
For each $i$, the closest points on the two nonnegative axes are
$(\xi_i^+,0)$ and $(0,\eta_i^+)$. Their squared distances differ by
\[
    \bigl((\xi_i-\xi_i^+)^2+\eta_i^2\bigr)
    -\bigl(\xi_i^2+(\eta_i-\eta_i^+)^2\bigr)
    =(\eta_i^+)^2-(\xi_i^+)^2.
\]
Thus the first candidate is no farther precisely when
$\xi_i^+\geq\eta_i^+$, proving \eqref{eq:scalar_projection_rule}.
\end{proof}
When $\xi_i^+=\eta_i^+$, \eqref{eq:scalar_projection_rule} selects
$(\xi_i^+,0)$. If their common value is positive, then $(0,\eta_i^+)$ is
another Euclidean projection, so $\Pi_{\mathcal C}$ is set-valued at this
coordinate. When the choice of projection matters, the analysis retains the
full set-valued mapping.
\begin{definition}
For $\alpha>0$, a point $y\in\mathcal C$ is \emph{$\alpha$-projected
stationary} if
\begin{equation}\label{eq:projected_stationary}
    y\in\Pi_{\mathcal C}\bigl(y-\alpha\nabla\Phi(y)\bigr).
\end{equation}
\end{definition}
The following condition states that projected stationarity is sufficient for
global optimality in \eqref{eq:general_model}:
\begin{equation}
\label{eq:projected_stationarity_implies_solution}
    \bigcup_{\alpha>0}
    \left\{y\in\mathcal C:
    y\in\Pi_{\mathcal C}\bigl(y-\alpha\nabla\Phi(y)\bigr)\right\}
    \subseteq\mathcal S.
\end{equation}
As this is not always true, later we will discuss in which situations this holds.
For a selected projection
$\hat y\in\Pi_{\mathcal C}(y-\alpha\nabla\Phi(y))$, define the
projected-gradient residual by
\begin{equation}\label{eq:pg_residual}
    G_\alpha(y;\hat y):=\frac{1}{\alpha}(y-\hat y).
\end{equation}
The dependence on $\hat y$ accounts for the possible nonuniqueness of the
projection.
\section{Algorithmic Framework}\label{sec:algorithmic_framework}

Our method applies a projected-gradient step to \eqref{eq:compact_model} and
solves the linear system associated with the selected face
$F(\bar y^{k+1})$. The projected-gradient step produces the trial point
\(\bar y^{k+1}\); a refinement is accepted only if it belongs to
$\mathcal C$ and does not increase $\Phi$ relative to \(\bar y^{k+1}\).

Let \(y:=[u^\top,v^\top]^\top\in\mathbb R^{2n}\) and $H:=[P,~Q]$.
Problem \eqref{eq:general_model} can be written as
\begin{equation}\label{eq:compact_model}
    \min_{y\in\mathcal C}
    \Phi(y)
    :=
    \frac12\|Hy-c\|^2.
\end{equation}
Its gradient is \(\nabla\Phi(y)=H^\top(Hy-c)\), which is Lipschitz continuous
with constant \(L=\|H\|_2^2\).
Starting from $y^k\in\mathcal C$, we compute
\begin{equation}\label{eq:pg_candidate}
    \bar y^{k+1}
    \in
    \Pi_{\mathcal C}
    \left(
        y^k-\alpha_k\nabla\Phi(y^k)
    \right).
\end{equation}

For every iteration, denote $\vartheta_k:=\vartheta(y^k)$, and 
$\bar\vartheta_{k+1}:=\vartheta(\bar y^{k+1})$.
Define the diagonal selector associated with the deterministic face
$F(\bar y^{k+1})$ by
\[
    \bar\Theta_{k+1}
    :=
    \frac12\left(
        I+\operatorname{diag}(\chi(\bar y^{k+1}))
    \right).
\]
The projected-gradient trial point serves as the reference point in the
acceptance test. The linear-system refinement associated with its selected
face is based on
\begin{equation}
\label{eq:selected_face_refinement_system}
    C_{\bar\Theta_{k+1}}\widetilde\xi=c.
\end{equation}
If $(2\bar\Theta_{k+1}-I)\widetilde\xi\geq0$, the corresponding point on the
selected face is
\begin{equation}
\label{eq:selected_face_refinement_point}
    \widetilde y
    :=
    \bigl(
        \bar\Theta_{k+1}\widetilde\xi,
        -(I-\bar\Theta_{k+1})\widetilde\xi
    \bigr).
\end{equation}
If the system cannot be solved or no
feasible trial point is produced, no refinement is accepted. 
A refinement is attempted
only when a prescribed trigger $\mathcal T_k$ holds. Initialize $\nu_0:=0$
and set
\begin{equation}
\label{eq:activity_vector_stability_counter}
    \nu_{k+1}:=
    \begin{cases}
        \nu_k+1,&\bar\vartheta_{k+1}=\vartheta_k,\\
        0,&\text{otherwise}.
    \end{cases}
\end{equation}
For prescribed $q\geq1$ and $\tau\geq0$, define the trigger
\begin{equation}\label{eq:refinement_trigger}
    \mathcal T_k\;{\rm holds.}
    \quad\Longleftrightarrow\quad
    \{\rho(\bar y^{k+1})\leq\tau
    \;\text{or}\;
    \nu_{k+1}\geq q\},
\end{equation}
where
\begin{equation}
\label{eq:relative_residual_scaling}
    \rho(y):=\frac{\|Hy-c\|}{s_c},
    \qquad
    s_c:=\max\{1,\|c\|\}.
\end{equation}
The first condition of \eqref{eq:refinement_trigger} tests the normalized equation residual, while the second
records that the activity vector $\vartheta$ has
remained unchanged for at least \(q\) consecutive iterations.
When $\mathcal T_k$ holds, let
$\widehat y^{k+1}$ be a refinement obtained from
\eqref{eq:selected_face_refinement_system} and
accept it according to
\begin{equation}\label{eq:refinement_safeguard}
    y^{k+1}
    =
    \begin{cases}
        \widehat y^{k+1},
        &
        \widehat y^{k+1}\in\mathcal C
        \ \text{and}\
        \Phi(\widehat y^{k+1})
        \leq
        \Phi(\bar y^{k+1}),
        \\[1mm]
        \bar y^{k+1},
        &
        \text{otherwise}.
    \end{cases}
\end{equation}
If no refinement is performed, we simply set
$y^{k+1}=\bar y^{k+1}$. Thus, irrespective of how the refinement candidate
is constructed,
\begin{equation}\label{eq:refinement_no_increase}
    \Phi(y^{k+1})
    \leq
    \Phi(\bar y^{k+1}).
\end{equation}

Algorithm~\ref{alg:main} summarizes the framework. The construction of
$\widehat y^{k+1}$ is deliberately left open here; concrete GAVE and LCP
refinements are specified in Section~\ref{sec:numerics}.

\begin{algorithm}[htbp]
\caption{Projected-gradient method with safeguarded refinement}
\label{alg:main}
\begin{algorithmic}[1]
\REQUIRE Problem data $(P,Q,c)$, a refinement associated with
\eqref{eq:selected_face_refinement_system}, trigger parameters $q\geq1$ and
$\tau\geq0$, and $y^0\in\mathcal C$.
\STATE Initialize $\nu_0=0$.
\FOR{$k=0,1,2,\ldots$}
    \STATE Select a stepsize $\alpha_k>0$.
    \STATE Compute $\bar y^{k+1}$ from \eqref{eq:pg_candidate}.
    \IF{the prescribed trigger $\mathcal T_k$ holds}
        \STATE Compute a refinement candidate $\widehat y^{k+1}$ using
        \eqref{eq:selected_face_refinement_system}.
    \ELSE
        \STATE Set $\widehat y^{k+1}=\bar y^{k+1}$.
    \ENDIF
    \STATE Determine $y^{k+1}$ according to
    \eqref{eq:refinement_safeguard}.
\ENDFOR
\end{algorithmic}
\end{algorithm}

\section{Convergence Analysis}\label{sec:convergence}

Using the descent property, we first establish worst-case complexity
for every refinement satisfying \eqref{eq:refinement_safeguard}. We then use
active faces and the matrix family \eqref{eq:matrix_family} 
to characterize conditions under which
projected stationarity implies zero residual, including at biactive points.
With a concrete analysis, error bounds for the complementarity constrained 
least squares is derived, which yield linear convergence of the algorithm. 
Finally, we demonstrate explicit bounds for
active-face identification and finite-termination of the algorithm. 
The GAVE- and LCP-specific assumptions enter only through verification of the
common matrix conditions.

\subsection{Convergence of the algorithm}

As a start, we verify the
descent property. Once this is done, convergence to projected stationarity and worst-case complexity follow from
standard telescoping arguments. As usual, we use the following stepsize condition.

\begin{assumption}\label{ass:stepsize}
There exist constants $\underline\alpha$ and $\overline\alpha$ such that
\(0<\underline\alpha\leq\alpha_k\leq\overline\alpha<1/L\), for all $k$.
\end{assumption}

For later use, define the uniform descent constant and the initial objective
gap by
\[
    c_d:=\frac{1}{2\overline\alpha}-\frac{L}{2}>0,
    \qquad
    \Delta_0:=\Phi(y^0)-\inf_{y\in\mathcal C}\Phi(y).
\]

With this step size, the following lemma transfers the decrease of the
projected-gradient point to the safeguarded iterate. 
\begin{lemma}\label{lem:pg_descent}
Suppose that Assumption~\ref{ass:stepsize} holds. Then the iterate satisfies
\begin{equation}\label{eq:descent_final}
    \Phi(y^{k+1})
    \leq
    \Phi(y^k)
    -c_d\|\bar y^{k+1}-y^k\|^2.
\end{equation}
\end{lemma}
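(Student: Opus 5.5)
The plan is to combine the descent lemma for $\Phi$, which has an $L$-Lipschitz gradient, with the variational characterization of the (possibly nonconvex) projection onto $\mathcal C$, and then transfer the resulting decrease to $y^{k+1}$ through the safeguard \eqref{eq:refinement_no_increase}. Since $\mathcal C$ is nonconvex, the usual obtuse-angle inequality for convex projections is unavailable. We use only the defining minimality of a Euclidean projection: $y^k\in\mathcal C$ is a feasible competitor.

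First, by \eqref{eq:pg_candidate}, $\bar y^{k+1}$ minimizes $\|z-(y^k-\alpha_k\nabla\Phi(y^k))\|^2$ over $z\in\mathcal C$. Comparing with $z=y^k$ gives
\[
\|\bar y^{k+1}-y^k+\alpha_k\nabla\Phi(y^k)\|^2\leq\|\alpha_k\nabla\Phi(y^k)\|^2 .
\]
Expanding and dividing by $\alpha_k$ yields
\[
\langle\nabla\Phi(y^k),\bar y^{k+1}-y^k\rangle\leq-\tfrac{1}{2\alpha_k}\|\bar y^{k+1}-y^k\|^2 .
\]
Second, the standard descent lemma for $\Phi(y)=\frac12\|Hy-c\|^2$ holds on all of $\mathbb R^{2n}$, so convexity of $\mathcal C$ is not needed:
\[
\Phi(\bar y^{k+1})\leq\Phi(y^k)+\langle\nabla\Phi(y^k),\bar y^{k+1}-y^k\rangle+\tfrac L2\|\bar y^{k+1}-y^k\|^2 .
\]
In fact, since $\Phi$ is quadratic, this is an exact identity with $\frac12\|H(\bar y^{k+1}-y^k)\|^2\le\frac L2\|\bar y^{k+1}-y^k\|^2$. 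Combining the two displays gives
\[
\Phi(\bar y^{k+1})\leq\Phi(y^k)-\left(\tfrac{1}{2\alpha_k}-\tfrac L2\right)\|\bar y^{k+1}-y^k\|^2 .
\]

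Third, Assumption~\ref{ass:stepsize} gives $\alpha_k\le\overline\alpha$, hence $\frac1{2\alpha_k}-\frac L2\ge c_d>0$, and so $\Phi(\bar y^{k+1})\le\Phi(y^k)-c_d\|\bar y^{k+1}-y^k\|^2$. Finally, \eqref{eq:refinement_no_increase} gives $\Phi(y^{k+1})\le\Phi(\bar y^{k+1})$. This holds in all three branches: no trigger, an accepted refinement, or a rejected refinement. It yields \eqref{eq:descent_final}.

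The only point needing care is the first step. One must avoid the convex projection inequality and instead use the global minimality of $\bar y^{k+1}$ against the feasible point $y^k$. This works for any selection from the set-valued $\Pi_{\mathcal C}$, and it is the reason the constant is $\frac1{2\alpha}$ rather than $\frac1\alpha$. Everything else is routine.
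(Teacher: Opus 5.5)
Your proof is correct and matches the paper's argument step for step. You compare the nonconvex projection against the feasible competitor $y^k$, combine this with the descent lemma for the $L$-smooth quadratic $\Phi$, use $\alpha_k\leq\overline\alpha$ to obtain the constant $c_d$, and transfer the decrease to $y^{k+1}$ through \eqref{eq:refinement_no_increase}.
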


\begin{proof}
Since $y^k\in\mathcal C$ and $\bar y^{k+1}$ is a Euclidean projection,
\[
    \|\bar y^{k+1}-(y^k-\alpha_k\nabla\Phi(y^k))\|^2
    \leq
    \|y^k-(y^k-\alpha_k\nabla\Phi(y^k))\|^2.
\]
Expanding this inequality gives
\[
    \langle\nabla\Phi(y^k),\bar y^{k+1}-y^k\rangle
    \leq
    -\frac{1}{2\alpha_k}\|\bar y^{k+1}-y^k\|^2.
\]
 As $\Phi(\cdot)$ has an
$L$-Lipschitz gradient, we have
\[
    \Phi(\bar y^{k+1})
    \leq\Phi(y^k)+\langle\nabla\Phi(y^k),\bar y^{k+1}-y^k\rangle+\frac{L}{2}\|\bar y^{k+1}-y^k\|^2.
\]
Combining the above two inequalities proves 
\begin{equation}\label{eq:basic_descent}
    \Phi(\bar y^{k+1})
    \leq
    \Phi(y^k)
    -\left(\frac{1}{2\alpha_k}-\frac{L}{2}\right)
    \|\bar y^{k+1}-y^k\|^2.
\end{equation}
Under Assumption~\ref{ass:stepsize},
\[
    \frac{1}{2\alpha_k}-\frac{L}{2}
    \geq
    c_d>0.
\]
Combining this bound with \eqref{eq:basic_descent} and
\eqref{eq:refinement_no_increase} proves \eqref{eq:descent_final}.
\end{proof}
 
We now obtain the basic asymptotic result. The normalized projected
displacements are precisely the stationarity measure in
\eqref{eq:pg_residual}.
\begin{theorem}\label{thm:subseq_convergence}
Suppose that Assumption~\ref{ass:stepsize} holds. Then
\begin{equation}\label{eq:summable_steps}
    \sum_{k=0}^{\infty}
    \|\bar y^{k+1}-y^k\|^2<\infty,
    \qquad
    \|G_{\alpha_k}(y^k;\bar y^{k+1})\|\to0.
\end{equation}
Moreover, every accumulation point $y^*$ of $\{y^k\}$ is
$\alpha_*$-projected stationary for some
$\alpha_*\in[\underline\alpha,\overline\alpha]$.
\end{theorem}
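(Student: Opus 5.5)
The plan has two parts: telescope Lemma~\ref{lem:pg_descent} to get \eqref{eq:summable_steps}, then pass to the limit in the projection inequality that defines $\bar y^{k+1}$ to get stationarity of accumulation points.

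\emph{Summability.} Summing \eqref{eq:descent_final} over $k=0,\dots,K-1$ gives
\[
c_d\sum_{k=0}^{K-1}\|\bar y^{k+1}-y^k\|^2\le\Phi(y^0)-\Phi(y^K)\le\Delta_0 .
\]
Here $\Delta_0<\infty$ because $\Phi\ge0$, so $\inf_{\mathcal C}\Phi\ge0$. Letting $K\to\infty$ yields the first claim. Consequently $\|\bar y^{k+1}-y^k\|\to0$. Since $\alpha_k\ge\underline\alpha$, we get $\|G_{\alpha_k}(y^k;\bar y^{k+1})\|\le\underline\alpha^{-1}\|\bar y^{k+1}-y^k\|\to0$.

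\emph{Accumulation points.} Let $y^{k_j}\to y^*$. Then $y^*\in\mathcal C$, since $\mathcal C$ is closed. Because $\alpha_{k_j}\in[\underline\alpha,\overline\alpha]$, we may pass to a further subsequence with $\alpha_{k_j}\to\alpha_*\in[\underline\alpha,\overline\alpha]$. Along this subsequence $\bar y^{k_j+1}\to y^*$, because the displacements vanish. The key step is that the set-valued projection onto the closed set $\mathcal C$ has a closed graph, which we verify directly rather than invoke. Write $z^j:=y^{k_j}-\alpha_{k_j}\nabla\Phi(y^{k_j})$. By continuity of $\nabla\Phi$, $z^j\to z^*:=y^*-\alpha_*\nabla\Phi(y^*)$. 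For every $w\in\mathcal C$ the projection property gives
\[
\|\bar y^{k_j+1}-z^j\|\le\|w-z^j\|.
\]
Letting $j\to\infty$ gives $\|y^*-z^*\|\le\|w-z^*\|$ for all $w\in\mathcal C$. Since $y^*\in\mathcal C$, this says $y^*\in\Pi_{\mathcal C}(y^*-\alpha_*\nabla\Phi(y^*))$, i.e.\ $y^*$ is $\alpha_*$-projected stationary.

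\emph{Main obstacle.} The argument must not assume single-valuedness or continuity of $\Pi_{\mathcal C}$, since $\mathcal C$ is nonconvex. The closed-graph argument above avoids this and needs only closedness of $\mathcal C$. The other delicate point is the varying stepsize, which is handled by extracting a convergent subsequence of $\{\alpha_{k_j}\}$. The refinement step causes no difficulty: it enters only through \eqref{eq:refinement_no_increase}, which is already built into Lemma~\ref{lem:pg_descent}, and the stationarity argument uses only the pairs $(y^k,\bar y^{k+1})$.
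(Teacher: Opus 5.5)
Your proposal is correct and follows the paper's proof essentially step for step. You telescope Lemma~\ref{lem:pg_descent} to get summability and $G_{\alpha_k}\to0$, then extract a convergent stepsize subsequence and pass to the limit through the closed graph of $\Pi_{\mathcal C}$; the paper cites that closed-graph property, whereas you verify it directly, which also makes $y^*\in\mathcal C$ explicit.
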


\begin{proof}
By Lemma \ref{lem:pg_descent}, $\{\Phi(y^k)\}$ is nonincreasing and,
because $\Phi$ is nonnegative, convergent. Summing
\eqref{eq:descent_final} gives
\[
    c_d\sum_{k=0}^{K}\|\bar y^{k+1}-y^k\|^2
    \leq \Phi(y^0)-\Phi(y^{K+1})
    \leq \Delta_0.
\]
Letting $K\to\infty$ proves the first part of \eqref{eq:summable_steps}. Hence
$\|\bar y^{k+1}-y^k\|\to0$, and the lower stepsize bound yields
$\|G_{\alpha_k}(y^k;\bar y^{k+1})\|\to0$.

Let $y^{k_j}\to y^*$. After passing to a further subsequence,
$\alpha_{k_j}\to\alpha_*$ for some
$\alpha_*\in[\underline\alpha,\overline\alpha]$. As $\|\bar y^{k+1}-y^k\|\to0$, we also have
$\bar y^{k_j+1}\to y^*$, while \eqref{eq:pg_candidate} gives
\[
    \bar y^{k_j+1}\in
    \Pi_{\mathcal C}
    \bigl(y^{k_j}-\alpha_{k_j}\nabla\Phi(y^{k_j})\bigr).
\]
As the projection mapping onto a closed set has a closed graph, passing to
the limit gives \eqref{eq:projected_stationary}. 
\end{proof}

With the decrease property, we can also quantify the number of iterations
needed to produce an approximately projected stationary point.
\begin{theorem}\label{thm:complexity}
Suppose that Assumption~\ref{ass:stepsize} holds. For any $\epsilon>0$, define
\(
    T_\epsilon
    :=\min\bigl\{k\geq0:
    \|G_{\alpha_k}(y^k;\bar y^{k+1})\|\leq\epsilon\bigr\},
\) 
with the convention $\min\varnothing:=+\infty$. Then $T_\epsilon$ is finite
and
\begin{equation}\label{eq:complexity_bound}
    T_\epsilon
    \leq
    \frac{\Delta_0}{c_d\underline\alpha^2\epsilon^2}.
\end{equation}
Consequently, the projected-gradient residual is at most
$\epsilon$ in $O(\epsilon^{-2})$ iterations.
\end{theorem}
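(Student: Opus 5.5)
The bound follows from the descent inequality of Lemma~\ref{lem:pg_descent} by a telescoping argument, rewritten in terms of the projected-gradient residual. By definition \eqref{eq:pg_residual},
\[
\|\bar y^{k+1}-y^k\| = \alpha_k\|G_{\alpha_k}(y^k;\bar y^{k+1})\| \geq \underline\alpha\,\|G_{\alpha_k}(y^k;\bar y^{k+1})\|,
\]
using Assumption~\ref{ass:stepsize}. Substituting into \eqref{eq:descent_final} gives, for every $k$,
\[
\Phi(y^{k+1})\leq \Phi(y^k)-c_d\underline\alpha^2\,\|G_{\alpha_k}(y^k;\bar y^{k+1})\|^2 .
\]

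Finiteness of $T_\epsilon$ comes directly from Theorem~\ref{thm:subseq_convergence}. There, $\|G_{\alpha_k}(y^k;\bar y^{k+1})\|\to0$, so some index has residual at most $\epsilon$ and the set defining $T_\epsilon$ is nonempty.

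For the quantitative bound, fix $K:=T_\epsilon$ (now finite). For every $k<K$ the residual exceeds $\epsilon$, so each step decreases $\Phi$ by more than $c_d\underline\alpha^2\epsilon^2$. Summing the displayed inequality over $k=0,\dots,K-1$ yields
\[
K c_d\underline\alpha^2\epsilon^2\leq \Phi(y^0)-\Phi(y^K)\leq \Phi(y^0)-\inf_{y\in\mathcal C}\Phi(y)=\Delta_0 .
\]
The second inequality uses $y^K\in\mathcal C$, which holds because the safeguard \eqref{eq:refinement_safeguard} only accepts points of $\mathcal C$. Dividing through gives \eqref{eq:complexity_bound}. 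If $K=0$ the bound is trivial, since $\Delta_0\geq0$.

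The $O(\epsilon^{-2})$ statement is then just a restatement of \eqref{eq:complexity_bound}, because $\Delta_0$, $c_d$ and $\underline\alpha$ do not depend on $\epsilon$.

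There is no real obstacle. The only point needing care is that the descent inequality involves $\bar y^{k+1}$ while the iterate is $y^{k+1}$. Lemma~\ref{lem:pg_descent} already handles this via \eqref{eq:refinement_no_increase}, so the telescoping runs over $y^k$ without any issue coming from the refinement step.
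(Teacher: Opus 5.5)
Your proof is correct and follows the paper's argument: you telescope the same one-step estimate $c_d\underline\alpha^2\|G_{\alpha_k}(y^k;\bar y^{k+1})\|^2\leq\Phi(y^k)-\Phi(y^{k+1})$ over $k=0,\ldots,T_\epsilon-1$ and bound the sum by $\Delta_0$. The only difference is cosmetic: you get finiteness of $T_\epsilon$ from $\|G_{\alpha_k}(y^k;\bar y^{k+1})\|\to0$ in Theorem~\ref{thm:subseq_convergence}, whereas the paper derives it directly by contradiction from the same summed estimate; both routes are valid and non-circular.
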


\begin{proof}
Using \eqref{eq:pg_residual} in \eqref{eq:descent_final} and
$\alpha_k\geq\underline\alpha$ yields
\[
    c_d\underline\alpha^2
    \|G_{\alpha_k}(y^k;\bar y^{k+1})\|^2
    \leq
    \Phi(y^k)-\Phi(y^{k+1}).
\]
If $T_\epsilon=+\infty$, then the residual norm is greater than $\epsilon$
at every iteration. Summing the one-step estimate over $k=0,\ldots,N$ would
then give
\[
    c_d\underline\alpha^2(N+1)\epsilon^2
    < \Delta_0
\]
for any $N$, which is impossible when $N$ is large. Hence $T_\epsilon$ is finite. If
$T_\epsilon=0$, then \eqref{eq:complexity_bound} is immediate. Otherwise,
its definition gives
\[
    \|G_{\alpha_k}(y^k;\bar y^{k+1})\|>\epsilon,
    \qquad k=0,\ldots,T_\epsilon-1.
\]
Summing the one-step estimate over these indices yields
\[
    c_d\underline\alpha^2 T_\epsilon\epsilon^2
    < \Phi(y^0)-\Phi(y^{T_\epsilon})
    \leq \Delta_0,
\]
which proves \eqref{eq:complexity_bound} and the stated
$O(\epsilon^{-2})$ bound.
\end{proof}

The bound in Theorem~\ref{thm:complexity} concerns approximate projected
stationarity of the baseline projected-gradient step. By itself, it does not
give an $\epsilon$-bound for the equation residual $\|Hy^k-c\|$, the distance
to the solution set, or the objective gap. Such conclusions require the
conditions ensuring that projected stationary points belong to $\mathcal S$
or the error bounds developed below.

\subsection{Projected stationarity and global convergence}

Projected stationarity need not imply zero residual on the nonconvex set
$\mathcal C$. At a strictly complementary point, stationarity reduces to a
normal equation on one active face. At a biactive coordinate, however, more
than one face is admissible, so a fixed active-face matrix is insufficient.
We begin with an auxiliary facewise characterization and then address the
biactive case through the matrix family \eqref{eq:matrix_family}.

\begin{proposition}
\label{prop:active_face}
Let $y^*=(u^*,v^*)$ be an $\alpha$-projected stationary point of
\eqref{eq:compact_model}. Then
\begin{align}
    P_{:i}^{\top}r(y^*)&=0,
    &&i\in\mathcal I_{+0}(y^*),\label{eq:active_u_normal}\\
    Q_{:i}^{\top}r(y^*)&=0,
    &&i\in\mathcal I_{0+}(y^*),\label{eq:active_v_normal}\\
    P_{:i}^{\top}r(y^*)&\geq0,
    \quad Q_{:i}^{\top}r(y^*)\geq0,
    &&i\in\mathcal I_{00}(y^*).\label{eq:biactive_gradient}
\end{align}
If $\mathcal I_{00}(y^*)=\emptyset$, then
\begin{equation}\label{eq:active_face_normal_equation}
    \mathcal H(y^*)^{\top}r(y^*)=0.
\end{equation}
Moreover,
\[
    y^*\in
    \underset{y\in F(y^*)}{\operatorname{argmin}}\,\Phi(y).
\]
\end{proposition}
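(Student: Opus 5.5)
The plan is to unpack the stationarity inclusion $y^*\in\Pi_{\mathcal C}(y^*-\alpha\nabla\Phi(y^*))$ coordinatewise, using the closed-form description of the projection. Write $g:=\nabla\Phi(y^*)=H^\top r(y^*)$. Its $u$-block is $P^\top r(y^*)$ and its $v$-block is $Q^\top r(y^*)$. Set $\xi:=u^*-\alpha P^\top r(y^*)$ and $\eta:=v^*-\alpha Q^\top r(y^*)$. Since $\mathcal C$ is a product of the $n$ sets $\{(a,b):a,b\ge0,\ ab=0\}$, a point is a Euclidean projection of $(\xi,\eta)$ onto $\mathcal C$ if and only if each pair $(u_i^*,v_i^*)$ is a nearest point of that two-axis set to $(\xi_i,\eta_i)$. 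By the proof of the projection proposition, the nearest points are $(\xi_i^+,0)$ when $\xi_i^+>\eta_i^+$, $(0,\eta_i^+)$ when $\eta_i^+>\xi_i^+$, and both when the two are equal.

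We then treat three cases.
\begin{itemize}
\item If $i\in\mathcal I_{+0}(y^*)$, then $u_i^*>0$, so the selected point must be $(\xi_i^+,0)$ with $\xi_i^+=u_i^*>0$. Hence $\xi_i=u_i^*$, which forces $P_{:i}^\top r(y^*)=0$; this is \eqref{eq:active_u_normal}.
\item If $i\in\mathcal I_{0+}(y^*)$, the symmetric argument gives $\eta_i=v_i^*$, hence \eqref{eq:active_v_normal}.
\item If $i\in\mathcal I_{00}(y^*)$, the nearest point is $(0,0)$. This is only possible if $\xi_i^+=\eta_i^+=0$; otherwise the nearest point would have a positive component. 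So $-\alpha P_{:i}^\top r\le0$ and $-\alpha Q_{:i}^\top r\le0$, which is \eqref{eq:biactive_gradient}.
\end{itemize}
When $\mathcal I_{00}=\emptyset$, each column $h_i(y^*)$ is $P_{:i}$ or $Q_{:i}$ according to the first two cases, so stacking these identities gives \eqref{eq:active_face_normal_equation}.

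For the minimality statement, note that $F(y^*)=F_s$ with $s=\chi(y^*)$ is a convex polyhedral cone. In the $y$-coordinates it is $\{(u,v): u_i\ge0,\ v_i=0 \text{ if } s_i=1;\ u_i=0,\ v_i\ge0 \text{ if } s_i=-1\}$, and the complementarity constraint holds automatically there. Since $\Phi$ is convex, it suffices to verify the first-order condition $\langle g,y-y^*\rangle\ge0$ for all $y\in F(y^*)$; indeed convexity gives $\Phi(y)\ge\Phi(y^*)+\langle g,y-y^*\rangle$. We also have $y^*\in F(y^*)$ by the definition of $\chi$. Expanding $\langle g,y-y^*\rangle$ componentwise over the free coordinates of $F(y^*)$:
\begin{itemize}
\item For $s_i=1$ the contribution is $P_{:i}^\top r\,(u_i-u_i^*)$. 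If $u_i^*>0$ this vanishes by \eqref{eq:active_u_normal}. If $i$ is biactive, $u_i^*=0$ and the term equals $P_{:i}^\top r\,u_i\ge0$ by \eqref{eq:biactive_gradient}.
\item For $s_i=-1$ we have $v_i^*>0$, and the contribution vanishes by \eqref{eq:active_v_normal}.
\end{itemize}
The fixed coordinates contribute nothing, since they are zero for both $y$ and $y^*$. Summing gives $\langle g,y-y^*\rangle\ge0$, and hence $\Phi(y)\ge\Phi(y^*)$ on $F(y^*)$.

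The main point requiring care is the biactive case in the coordinatewise analysis. The projection is set-valued, so the argument must hold for an arbitrary element of $\Pi_{\mathcal C}$, not just the deterministic rule \eqref{eq:scalar_projection_rule}. This is why we argue through the full set of nearest points to the two-axis set rather than through the selection rule. The remaining steps are routine.
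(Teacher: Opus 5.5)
Your proof is correct. The coordinatewise part (the three index classes and the stacked normal equation) is the same as the paper's. Your insistence on arbitrary nearest points rather than the selection rule \eqref{eq:scalar_projection_rule} is slightly more careful, although at a biactive index $(0,0)$ can be a nearest point only when it is the unique one, so nothing is lost either way.

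The genuine difference is in the facewise minimality. The paper proves it only in the strictly complementary case. For $\widetilde y\in F(y^*)$ it writes $r(\widetilde y)=r(y^*)+\mathcal H(y^*)(\widetilde s-s^*)$ with $s^*=u^*+v^*$ and $\widetilde s=\widetilde u+\widetilde v$. It then uses the exact expansion $\Phi(\widetilde y)-\Phi(y^*)=\langle\mathcal H(y^*)^\top r(y^*),\widetilde s-s^*\rangle+\tfrac12\|\mathcal H(y^*)(\widetilde s-s^*)\|^2$ together with \eqref{eq:active_face_normal_equation}. Both $\mathcal H(y^*)$ and that normal equation require $\mathcal I_{00}(y^*)=\varnothing$.

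You instead combine global convexity of $\Phi$ with the first-order inequality $\langle\nabla\Phi(y^*),y-y^*\rangle\geq0$ on $F(y^*)$. The rule $\chi_i(y^*)=1$ at biactive indices lets \eqref{eq:biactive_gradient} supply the nonnegative terms $P_{:i}^\top r(y^*)\,u_i$. Your route therefore gives the ``Moreover'' claim for every projected stationary point, biactive or not. This matches the unconditional phrasing of the statement and is more than the paper's written argument covers. The paper's expansion, in exchange, gives an exact expression for the objective gap on the face, including the quadratic term.
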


\begin{proof}
 As $y^*$ is projected stationary, we have  $y^*\in\Pi_{\mathcal C}\bigl(y^*-\alpha\nabla\Phi(y^*)\bigr)$. When
$i\in\mathcal I_{+0}(y^*)$, the $i$th projected pair of $y^*-\alpha\nabla\Phi(y^*)$ should equal $(u_i^*,0)$ some $u_i^*>0$. Write $\nabla\Phi(y^*)=[P^{\top}r(y^*);Q^{\top}r(y^*)]$. 
The closest point on the first nonnegative axis to $y^*-\alpha\nabla\Phi(y^*)$ has first component
\(
    \bigl(u_i^*-\alpha P_{:i}^{\top}r(y^*)\bigr)^+.
\)
It can equal $u_i^*$ only if $P_{:i}^{\top}r(y^*)=0$, so 
\eqref{eq:active_u_normal} is proved. The proof of \eqref{eq:active_v_normal} is
identical. If $i\in\mathcal I_{00}(y^*)$, the projection of
$(-\alpha P_{:i}^{\top}r(y^*),-\alpha Q_{:i}^{\top}r(y^*))$ is $(0,0)$.
The projection rule \eqref{eq:scalar_projection_rule} therefore proves
\eqref{eq:biactive_gradient}. 
Under strict complementarity, that is, $\mathcal I_{00}(y^*)=\emptyset$, combining \eqref{eq:active_u_normal} and \eqref{eq:active_v_normal} 
gives \eqref{eq:active_face_normal_equation}. For any
$\widetilde y=(\widetilde u,\widetilde v)\in F(y^*)$, set
$s^*:=u^*+v^*$ and $\widetilde s:=\widetilde u+\widetilde v$. By the definition of $\mathcal H(y^*)$, we have
\[
    r(\widetilde y)
    =r(y^*)+\mathcal H(y^*)(\widetilde s-s^*),
\]
and hence
\[
    \Phi(\widetilde y)-\Phi(y^*)
    =\left\langle
      \mathcal H(y^*)^\top r(y^*),\widetilde s-s^*
     \right\rangle
     +\frac12\left\|\mathcal H(y^*)(\widetilde s-s^*)\right\|^2
    \geq0
\]
by \eqref{eq:active_face_normal_equation}, which proves the facewise
optimality statement.
\end{proof}

The proposition is only facewise: a stationary point minimizes the quadratic
model on its active face, but the minimum residual on that face may be
positive. This active-face viewpoint is analogous to the active-set
decomposition used in MPCC analysis \cite[Section~4]{nurkanovic2026sqpcc}.

The active-face normal equation admits the following formulation in terms of
the matrix family \eqref{eq:matrix_family}.

\begin{corollary}
\label{cor:strict_complementarity_zero_residual}
Let $y^*$ be a strictly complementary $\alpha$-projected stationary point of
\eqref{eq:compact_model}, and define
\(
    \Theta_*:=\tfrac12
    \bigl(I+\operatorname{diag}(\vartheta(y^*))\bigr).
\)
Then
\begin{equation}
\label{eq:active_face_matrix_normal_equation}
    C_{\Theta_*}^{\top}r(y^*)=0.
\end{equation}
Moreover, $r(y^*)=0$ if
\begin{equation}
\label{eq:pointwise_matrix_conditions}
    \operatorname{rank}(C_{\Theta_*})=m
    \quad\text{or}\quad
    c\in\operatorname{Range}(C_{\Theta_*}).
\end{equation}
\end{corollary}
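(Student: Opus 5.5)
The plan is to identify $C_{\Theta_*}$ with a column-signed version of the active matrix $\mathcal H(y^*)$, transfer the normal equation \eqref{eq:active_face_normal_equation} from Proposition~\ref{prop:active_face}, and then use a standard linear-algebra argument to obtain zero residual.

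\textbf{Step 1: identify $C_{\Theta_*}$.} Since $y^*$ is strictly complementary, every $\vartheta_i(y^*)$ lies in $\{\pm1\}$. Hence $\Theta_*$ is a $0/1$ diagonal matrix: $(\Theta_*)_{ii}=1$ for $i\in\mathcal I_{+0}(y^*)$ and $0$ for $i\in\mathcal I_{0+}(y^*)$. By \eqref{eq:matrix_family}, the $i$th column of $C_{\Theta_*}$ is $P_{:i}$ in the first case and $-Q_{:i}$ in the second. Therefore
\[
C_{\Theta_*}=\mathcal H(y^*)\,D,\qquad D:=\operatorname{diag}(\vartheta(y^*)).
\]

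\textbf{Step 2: the normal equation.} Proposition~\ref{prop:active_face} gives $\mathcal H(y^*)^\top r(y^*)=0$ in the strictly complementary case. Then
\[
C_{\Theta_*}^\top r(y^*)=D\,\mathcal H(y^*)^\top r(y^*)=0,
\]
which proves \eqref{eq:active_face_matrix_normal_equation}.

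\textbf{Step 3: zero residual.} Treat the two alternatives of \eqref{eq:pointwise_matrix_conditions} separately.
\begin{itemize}
\item If $\operatorname{rank}(C_{\Theta_*})=m$, then $C_{\Theta_*}^\top$ is injective on $\mathbb R^m$, so $r(y^*)=0$ directly.
\item Suppose instead $c\in\operatorname{Range}(C_{\Theta_*})$. Write $y^*=(\Theta_*\xi^*,-(I-\Theta_*)\xi^*)$ as in \eqref{eq:face_signed_parameterization} with $s=\vartheta(y^*)$; here $\xi^*=u^*-v^*$. Then \eqref{eq:face_residual_representation} gives $r(y^*)=C_{\Theta_*}\xi^*-c$. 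Both terms lie in $\operatorname{Range}(C_{\Theta_*})$, so $r(y^*)\in\operatorname{Range}(C_{\Theta_*})$. Step 2 places $r(y^*)$ in $\ker C_{\Theta_*}^\top=\operatorname{Range}(C_{\Theta_*})^\perp$. A vector in a subspace and in its orthogonal complement is zero, so $r(y^*)=0$.
\end{itemize}

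The argument is elementary. The only point needing care is the sign bookkeeping in Step 1: $C_\Theta$ carries $-Q$, so the $v$-columns are negated, and this is absorbed by $D$. One must also check that $\Theta_s$ with $s=\vartheta(y^*)$ matches $\Theta_*$, so that \eqref{eq:face_residual_representation} applies with $\xi^*=u^*-v^*$.
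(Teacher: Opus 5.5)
Your proposal is correct and follows essentially the same route as the paper, which writes the identity as $\mathcal H(y^*)=C_{\Theta_*}D_*$ with $D_*=\operatorname{diag}(\vartheta(y^*))$ (equivalent to your $C_{\Theta_*}=\mathcal H(y^*)D$ since $D^2=I$), transfers the normal equation from Proposition~\ref{prop:active_face}, and then handles the two alternatives by injectivity of $C_{\Theta_*}^\top$ and by the range versus orthogonal-complement argument via \eqref{eq:face_residual_representation}. Your explicit choice $\xi^*=u^*-v^*$ and your check that $\Theta_s=\Theta_*$ for $s=\vartheta(y^*)$ spell out a step the paper leaves implicit.
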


\begin{proof}
Set $D_*:=2\Theta_*-I=\operatorname{diag}(\vartheta(y^*))$. Strict
complementarity implies that $\Theta_*$ is a diagonal matrix with the $i$-th entry equal to $1$ for $i\in \mathcal I_{+0}$ and equal to $0$ for $i\in \mathcal I_{0+}$,
while  $D_*$ is nonsingular. 
Since $C_{\Theta_*}=P\Theta_*-Q(I-\Theta_*)$, we have
$\mathcal {H}(y^*)=C_{\Theta_*}D_*$. Hence
\eqref{eq:active_face_normal_equation} is equivalent to
\eqref{eq:active_face_matrix_normal_equation} in this situation. If
$\operatorname{rank}(C_{\Theta_*})=m$, then
$C_{\Theta_*}^{\top}$ is injective, so \eqref{eq:active_face_matrix_normal_equation}
gives $r(y^*)=0$. Alternatively,
\eqref{eq:face_residual_representation} and
$c\in\operatorname{Range}(C_{\Theta_*})$ give
$r(y^*)\in\operatorname{Range}(C_{\Theta_*})$, whereas
\eqref{eq:active_face_matrix_normal_equation} gives
$r(y^*)\in\operatorname{Range}(C_{\Theta_*})^\perp$. Thus $r(y^*)=0$.
\end{proof}

\begin{remark}
For a strictly complementary GAVE point, let
$D_*:=\operatorname{diag}(\vartheta(y^*))$. Then
$C_{\Theta_*}=A-BD_*$, so either condition in
\eqref{eq:pointwise_matrix_conditions} can be checked directly from
$A-BD_*$. For a strictly complementary LCP point, let
$\mathcal I:=\mathcal I_{+0}(y^*)$. After applying the same ordering
$(\mathcal I,\mathcal I^c)$ to its rows and columns, $C_{\Theta_*}$ has the
block form
\[
    \begin{bmatrix}
        M_{\mathcal I\mathcal I}&0\\
        M_{\mathcal I^c\mathcal I}&I
    \end{bmatrix}.
\]
Hence $\det(M_{\mathcal I\mathcal I})\neq0$ is sufficient for
$r(y^*)=0$.
\end{remark}

The preceding corollary gives conditions under which an individual strictly
complementary projected stationary point has zero residual. The following
theorem uses these pointwise conditions to establish convergence to the
solution set of \eqref{eq:compact_model}.

\begin{theorem} 
\label{thm:active_face_global_convergence}
Suppose that Assumption~\ref{ass:stepsize} holds, $\{y^k\}$ is bounded, and
every accumulation point $y^*$ satisfies $\mathcal I_{00}(y^*)=\emptyset$
and, with
\(\Theta_*:=\tfrac12(I+\operatorname{diag}(\vartheta(y^*)))\), either
$\operatorname{rank}(C_{\Theta_*})=m$ or
$c\in\operatorname{Range}(C_{\Theta_*})$. Then
$\mathcal S\neq\varnothing$, every accumulation point belongs to
$\mathcal S$, and $\operatorname{dist}(y^k,\mathcal S)\to0$.
Specifically, if $\mathcal S=\{y^\dagger\}$, then $y^k\to y^\dagger$.
\end{theorem}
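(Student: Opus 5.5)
The plan is to combine Theorem~\ref{thm:subseq_convergence} with Corollary~\ref{cor:strict_complementarity_zero_residual}, then run a standard compactness argument for the distance to $\mathcal S$.

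\textbf{Step 1: accumulation points are solutions.} Since $\{y^k\}$ is bounded, it has at least one accumulation point $y^*$. Because $\mathcal C$ is closed, $y^*\in\mathcal C$. By Theorem~\ref{thm:subseq_convergence}, $y^*$ is $\alpha_*$-projected stationary for some $\alpha_*\in[\underline\alpha,\overline\alpha]$. By hypothesis, $y^*$ is strictly complementary and satisfies one of the two conditions in \eqref{eq:pointwise_matrix_conditions} with $\Theta_*$. Corollary~\ref{cor:strict_complementarity_zero_residual} then gives $r(y^*)=0$, so $y^*\in\mathcal S$. Since an accumulation point exists, $\mathcal S\neq\varnothing$. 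This step is essentially bookkeeping.

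\textbf{Step 2: $\operatorname{dist}(y^k,\mathcal S)\to0$.} I argue by contradiction. Suppose there are $\varepsilon>0$ and a subsequence $\{k_j\}$ with $\operatorname{dist}(y^{k_j},\mathcal S)\geq\varepsilon$ for all $j$. By boundedness we may extract a further subsequence converging to some $\hat y$. By Step~1, $\hat y\in\mathcal S$. Since $\operatorname{dist}(\cdot,\mathcal S)$ is $1$-Lipschitz, this gives $\operatorname{dist}(y^{k_j},\mathcal S)\le\|y^{k_j}-\hat y\|\to0$, a contradiction. One could alternatively note that $\Phi(y^k)$ is nonincreasing, but that is not needed: every subsequential limit lying in $\mathcal S$ suffices.

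\textbf{Step 3: the unique-solution case.} If $\mathcal S=\{y^\dagger\}$, then every accumulation point of the bounded sequence equals $y^\dagger$. A bounded sequence in $\mathbb R^{2n}$ whose only accumulation point is $y^\dagger$ converges to it: otherwise some subsequence stays $\varepsilon$-away from $y^\dagger$, and a limit point of that subsequence would be a different accumulation point. Equivalently, this follows directly from Step~2.

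\textbf{Main obstacle.} The only delicate point is the legitimacy of invoking the Corollary at a limit point. Theorem~\ref{thm:subseq_convergence} must supply projected stationarity for a genuine stepsize $\alpha_*>0$, which holds because $\alpha_*\geq\underline\alpha>0$. The Corollary needs only $\alpha$-projected stationarity for some $\alpha>0$, so this is satisfied. The hypotheses are imposed at the accumulation points themselves, so no continuity of $\vartheta$ or $\Theta_*$ along the sequence is required. I therefore expect no real difficulty beyond careful subsequence extraction.
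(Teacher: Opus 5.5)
Your proof is correct and follows the paper's argument: Theorem~\ref{thm:subseq_convergence} makes every accumulation point projected stationary, and Corollary~\ref{cor:strict_complementarity_zero_residual} then places it in $\mathcal S$. The standard subsequence/compactness argument gives $\operatorname{dist}(y^k,\mathcal S)\to0$ and convergence when $\mathcal S=\{y^\dagger\}$; the paper additionally records $\Phi(y^k)\to0$ via monotonicity, which, as you note, is not needed for the stated conclusions.
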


\begin{proof}
Let $y^*$ be any accumulation point. Theorem~\ref{thm:subseq_convergence} shows that $y^*$ is
$\alpha_*$-projected stationary for some
$\alpha_*\in[\underline\alpha,\overline\alpha]$. Corollary~
\ref{cor:strict_complementarity_zero_residual} then gives
$r(y^*)=0$, so $y^*\in\mathcal S$, indicating that $\mathcal S \neq \emptyset$.

Since $\{y^k\}$ is bounded,
monotonicity of $\{\Phi(y^k)\}$ and continuity along a convergent subsequence
give $\Phi(y^k)\to0$. Moreover, a bounded sequence whose accumulation points
all lie in the closed set $\mathcal S$ satisfies
$\operatorname{dist}(y^k,\mathcal S)\to0$ by the standard subsequence
argument. If $\mathcal S=\{y^\dagger\}$, every accumulation point equals
$y^\dagger$, and boundedness then gives $y^k\to y^\dagger$.
\end{proof}

The theorem therefore upgrades a pointwise zero-residual criterion to
convergence of the generated sequence to the solution set. Strict
complementarity is used only to select a single active-face matrix at each
accumulation point.

If the zero-residual solution set $\mathcal S$ is empty, then the assumptions
of Theorem~\ref{thm:active_face_global_convergence} cannot all hold.
Nevertheless, problem~\eqref{eq:compact_model} still has a global minimizer. 
Indeed,
$\mathcal C$ is a finite union of polyhedral cones, and the image of
each such cone under $H$ is closed and polyhedral. Hence $H(\mathcal C)$ is
closed, so the distance from $c$ to $H(\mathcal C)$ is attained. However, 
Proposition~\ref{prop:active_face} only 
shows that a projected stationary point minimizes
$\Phi$ over its active face, it does not 
guarantee global optimality over $\mathcal C$. In this inconsistent case,
convergence to a global least-squares solution 
would require an additional condition, such as an appropriate projected
Polyak--\L{}ojasiewicz inequality. This case is beyond the scope of the paper. 

The preceding condition is pointwise and requires strict complementarity. At
a biactive coordinate, the key step is to construct a diagonal selector
$\Theta\in\mathcal D_{[0,1]}$ from the coordinatewise stationarity
inequalities. This gives one equation $C_\Theta^\top r(y)=0$ without selecting
one of the incident faces.

\begin{theorem}
\label{thm:uniform_matrix_stationarity}
Suppose that $m\leq n$ and $\mu_C>0$.
Then every $\alpha$-projected stationary point of
\eqref{eq:compact_model} belongs to $\mathcal S$, for every $\alpha>0$.
If, in addition, Assumption~\ref{ass:stepsize} holds and $\{y^k\}$ is
bounded, then
\begin{equation}
\label{eq:uniform_matrix_global_convergence}
    \Phi(y^k)\to0,
    \qquad
    \operatorname{dist}(y^k,\mathcal S)\to0,
\end{equation}
and every accumulation point belongs to $\mathcal S$.
\end{theorem}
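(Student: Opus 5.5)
The plan is to build, at an arbitrary $\alpha$-projected stationary point $y=(u,v)$, a diagonal selector $\Theta\in\mathcal D_{[0,1]}$ with $C_\Theta^\top r(y)=0$. Since $m\le n$ and $\mu_C>0$, the matrix $C_\Theta$ has full row rank $m$. Hence $C_\Theta^\top$ is injective, and $r(y)=0$ follows.

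Write $a:=P^\top r(y)$ and $b:=Q^\top r(y)$. The $i$th column of $C_\Theta^\top r(y)$ is $\theta_i a_i-(1-\theta_i)b_i$. Proposition~\ref{prop:active_face} gives the three cases.
\begin{itemize}
\item For $i\in\mathcal I_{+0}(y)$ we have $a_i=0$, and we take $\theta_i=1$.
\item For $i\in\mathcal I_{0+}(y)$ we have $b_i=0$, and we take $\theta_i=0$.
\end{itemize}
Each of these cases gives a zero entry.

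For a biactive index, \eqref{eq:biactive_gradient} gives only $a_i\ge0$ and $b_i\ge0$. We need $\theta_i\in[0,1]$ with $\theta_i a_i=(1-\theta_i)b_i$.
\begin{itemize}
\item If $a_i+b_i>0$, take $\theta_i=b_i/(a_i+b_i)$. Then $\theta_i a_i=a_ib_i/(a_i+b_i)=(1-\theta_i)b_i$.
\item If $a_i=b_i=0$, any $\theta_i$ works.
\end{itemize}
This is the key step. The same-sign conditions are exactly what make the convex combination admissible. Assembling the entries gives $C_\Theta^\top r(y)=0$, so $\sigma_{\min}(C_\Theta)\ge\mu_C>0$ forces $r(y)=0$ and $y\in\mathcal S$. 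This holds for every $\alpha>0$, which establishes \eqref{eq:projected_stationarity_implies_solution}.

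For the convergence part, argue as in Theorem~\ref{thm:active_face_global_convergence}. By Theorem~\ref{thm:subseq_convergence}, every accumulation point $y^*$ of the bounded sequence is $\alpha_*$-projected stationary, so by the first part $y^*\in\mathcal S$. In particular $\mathcal S\neq\varnothing$.

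Take a subsequence converging to such a $y^*$. Continuity of $\Phi$ gives $\Phi(y^{k_j})\to\Phi(y^*)=0$. Since $\{\Phi(y^k)\}$ is nonincreasing by Lemma~\ref{lem:pg_descent}, the whole sequence satisfies $\Phi(y^k)\to0$.

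Finally, $\mathcal S$ is closed, being the zero set of the continuous map $r$ intersected with the closed set $\mathcal C$. Suppose $\operatorname{dist}(y^k,\mathcal S)\ge\varepsilon$ along some subsequence. By boundedness, that subsequence has a further accumulation point. That point lies in $\mathcal S$, and continuity of the distance function then gives a contradiction.

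I expect no serious obstacle. The only delicate point is the biactive construction, specifically checking that the signs from \eqref{eq:biactive_gradient} are the ones needed. Note that $C_\Theta=P\Theta-Q(I-\Theta)$ carries the minus sign on the $Q$ part. This is why nonnegative $a_i$ and $b_i$ can be balanced, rather than requiring opposite signs.
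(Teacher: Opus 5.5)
Your proof is correct and follows the paper's argument essentially verbatim: you use the same diagonal selector, namely $\theta_i=1$ on $\mathcal I_{+0}$, $\theta_i=0$ on $\mathcal I_{0+}$, and $\theta_i=b_i/(a_i+b_i)$ at biactive indices. You then get injectivity of $C_\Theta^\top$ from $m\le n$ and $\mu_C>0$, and close with the same monotonicity-plus-compactness argument for convergence. The only differences are cosmetic: the paper fixes $\theta_i=0$ when $a_i=b_i=0$, and where you wrote ``column'' you mean the $i$th entry of $C_\Theta^\top r(y)$.
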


\begin{proof}
Let $y=(u,v)$ be an $\alpha$-projected stationary point and write
$p_i:=P_{:i}^{\top}r(y)$ and $q_i:=Q_{:i}^{\top}r(y)$. Define
$\theta_i=1$ on $\mathcal I_{+0}(y)$ and $\theta_i=0$ on
$\mathcal I_{0+}(y)$. At a biactive index, where $p_i,q_i\geq0$ by
\eqref{eq:biactive_gradient}, set
\(
    \theta_i:=q_i/(p_i+q_i)
\)
if $p_i+q_i>0$, and set $\theta_i:=0$ otherwise. Then
$\Theta:=\operatorname{diag}(\theta)\in\mathcal D_{[0,1]}$ and
\(
    \theta_i p_i-(1-\theta_i)q_i=0
\)
for every $i$. Hence $C_\Theta^{\top}r(y)=0$.
The condition $\mu_C>0$ makes
$C_\Theta^{\top}$ injective, so $r(y)=0$.

Now assume that Assumption~\ref{ass:stepsize} holds and $\{y^k\}$ is
bounded. Theorem~\ref{thm:subseq_convergence} and the first conclusion show
that every accumulation point belongs to $\mathcal S$. As $\Phi$ is continuous and decreasing, we have $\Phi(y^k)\to0$. By the same compactness argument used
in the proof of Theorem~\ref{thm:active_face_global_convergence} we also have 
$\operatorname{dist}(y^k,\mathcal S)\to0$.
\end{proof}

The uniform matrix condition thus removes the strict-complementarity
requirement, as it makes $C_\Theta^\top$ injective for the
diagonal selector produced at a biactive point.

\begin{corollary}
\label{cor:gave_uniform_matrix_consequences}
For the GAVE \eqref{eq:gave}, suppose that $m\leq n$ and
\begin{equation}
\label{eq:gave_uniform_row_regular_condition}
    \inf_{D\in\mathcal D_{[-1,1]}}
    \sigma_{\min}(A-BD)>0.
\end{equation}
Then the conclusions of Theorem~\ref{thm:uniform_matrix_stationarity} hold.
If, more specifically, $m=n$ and
\begin{equation}
\label{eq:gave_uniform_regular_condition}
    \delta:=\sigma_{\min}(A)-\|B\|_2>0,
\end{equation}
then \eqref{eq:gave} has a unique solution $x^\dagger$. In this situation, every projected stationary point
equals
$y^\dagger:=((x^\dagger)^+,(x^\dagger)^-)$. Moreover, if Assumption~\ref{ass:stepsize} holds, then
\begin{equation}
\label{eq:gave_global_convergence}
    y^k\to y^\dagger,
    \qquad
    \Phi(y^k)\to0.
\end{equation}
\end{corollary}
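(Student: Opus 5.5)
The plan has three parts: translate the GAVE data into the matrix family \eqref{eq:matrix_family}, show that \eqref{eq:gave_uniform_regular_condition} makes the GAVE uniquely solvable, and then use Theorem~\ref{thm:uniform_matrix_stationarity} together with a boundedness argument.

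\textbf{Step 1: the matrix family for GAVE.} With $P=A-B$ and $Q=-A-B$ from Example~\ref{ex:GAVE}, a direct computation gives
\[
C_\Theta=(A-B)\Theta+(A+B)(I-\Theta)=A-B(2\Theta-I).
\]
As $\Theta$ ranges over $\mathcal D_{[0,1]}$, the matrix $D:=2\Theta-I$ ranges exactly over $\mathcal D_{[-1,1]}$. Hence $\mu_C$ equals the infimum in \eqref{eq:gave_uniform_row_regular_condition}. Under $m\le n$, Theorem~\ref{thm:uniform_matrix_stationarity} then applies verbatim, which gives the first claim.

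\textbf{Step 2: the square case, positivity and uniqueness.} Let $m=n$ and $\delta>0$. For any $D\in\mathcal D_{[-1,1]}$ we have $\|BD\|\le\|B\|$, so Weyl's inequality gives
\[
\sigma_{\min}(A-BD)\ge\sigma_{\min}(A)-\|B\|=\delta>0.
\]
Thus \eqref{eq:gave_uniform_row_regular_condition} holds, and every projected stationary point lies in $\mathcal S$.

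For existence, I would use a fixed-point argument. Write \eqref{eq:gave} as $x=A^{-1}(B|x|+b)$. The map $x\mapsto A^{-1}(B|x|+b)$ has Lipschitz constant $\|A^{-1}\|\|B\|=\|B\|/\sigma_{\min}(A)<1$, so Banach's theorem gives a unique solution $x^\dagger$.

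Next I show $\mathcal S=\{y^\dagger\}$. Any $(u,v)\in\mathcal S$ satisfies $u\odot v=0$ and $u,v\ge0$. Therefore $x:=u-v$ has $x^+=u$, $x^-=v$ and $|x|=u+v$, so $x$ solves the GAVE and equals $x^\dagger$. This forces $(u,v)=y^\dagger$. Combined with the previous paragraph, every projected stationary point equals $y^\dagger$.

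\textbf{Step 3: boundedness of the iterates (the main obstacle).} Theorem~\ref{thm:uniform_matrix_stationarity} requires a bounded sequence, and this is not assumed in the corollary, so it has to be derived. Lemma~\ref{lem:pg_descent} gives $\Phi(y^k)\le\Phi(y^0)$. It therefore suffices to show that $\Phi$ is coercive on $\mathcal C$.

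Take $y=(u,v)\in\mathcal C$ and set $x=u-v$. Then $Hy=Ax-B|x|$. Since $|x|=Dx$ with $D=\operatorname{diag}(\operatorname{sgn}(x))\in\mathcal D_{[-1,1]}$, we get
\[
\|Hy\|=\|(A-BD)x\|\ge\delta\|x\|.
\]
Moreover $\|y\|=\|x\|$, because $u$ and $v$ have disjoint supports. Hence $\|Hy-c\|\ge\delta\|y\|-\|c\|$, so the sublevel set $\{y\in\mathcal C:\Phi(y)\le\Phi(y^0)\}$ is bounded, and the iterates stay in it.

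Theorem~\ref{thm:uniform_matrix_stationarity} now gives $\Phi(y^k)\to0$ and $\operatorname{dist}(y^k,\mathcal S)\to0$. Since $\mathcal S=\{y^\dagger\}$, this distance is $\|y^k-y^\dagger\|$, which yields \eqref{eq:gave_global_convergence}.
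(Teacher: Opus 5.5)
Your proposal is correct and follows essentially the same route as the paper. The paper also identifies $C_\Theta=A-B(2\Theta-I)$ so that the infimum equals $\mu_C$, obtains the unique solution from the contraction $x\mapsto A^{-1}(B|x|+b)$, and derives boundedness from the coercivity estimate $\sqrt{2\Phi(y)}\geq\delta\|y\|-\|b\|$ on $\mathcal C$ before invoking Theorem~\ref{thm:uniform_matrix_stationarity}; you additionally supply details the paper leaves implicit, namely the Weyl bound, the identity $|x|=\operatorname{diag}(\operatorname{sgn}(x))\,x$ behind the coercivity estimate, and the explicit check that $\mathcal S=\{y^\dagger\}$.
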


\begin{proof}
For the GAVE representation, \eqref{eq:matrix_family} gives
\(C_\Theta=A-BD\), where $D:=2\Theta-I$ ranges over
$\mathcal D_{[-1,1]}$. Thus
\eqref{eq:gave_uniform_row_regular_condition} is precisely
the condition $\mu_C>0$.

Under \eqref{eq:gave_uniform_regular_condition},
$\sigma_{\min}(A-BD)\geq\delta$ for every
$D\in\mathcal D_{[-1,1]}$. Moreover,
$\|A^{-1}B\|_2<1$, so $x\mapsto A^{-1}(B|x|+b)$ is a contraction and has a
unique fixed point $x^\dagger$. For $y=(u,v)\in\mathcal C$, let $x:=u-v$.
Then $|x|=u+v$, $\|x\|=\|y\|$, and
\(
    \sqrt{2\Phi(y)}\geq\delta\|y\|-\|b\|.
\)
Hence $\Phi$ is coercive on $\mathcal C$. Following a similar argument to that in
Theorem~\ref{thm:uniform_matrix_stationarity}, unique solvability gives
\eqref{eq:gave_global_convergence}.
\end{proof}

\begin{remark}
The continuous family $\Theta\in\mathcal D_{[0,1]}$ accounts for the
fractional diagonal entries that may arise at biactive coordinates.
The condition $\mu_C>0$ is sufficient for
\eqref{eq:projected_stationarity_implies_solution} and is not necessary in
general. Notably, the solvability result of GAVE recovers that of \cite{xie2025randomized}.
\end{remark}

\begin{corollary}
\label{cor:lcp_p_matrix_consequences}
Suppose that $M$ is a P-matrix in the LCP \eqref{eq:lcp}. Then
\begin{equation}
\label{eq:lcp_uniform_matrix_modulus}
    \mu_{\rm L}
    :=
    \min_{\Theta\in\mathcal D_{[0,1]}}
    \sigma_{\min}(I-\Theta+M\Theta)
    >0.
\end{equation}
Consequently, every $\alpha$-projected stationary point of
\eqref{eq:compact_model} is the unique LCP solution
$y^\dagger=(z^\dagger,w^\dagger)$. If Assumption~\ref{ass:stepsize} holds,
then
\begin{equation}
\label{eq:lcp_global_convergence}
    (z^k,w^k)\to(z^\dagger,w^\dagger),
    \qquad
    \Phi(z^k,w^k)\to0.
\end{equation}
\end{corollary}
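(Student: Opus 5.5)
For the LCP data $P=M$, $Q=-I$, the matrix family \eqref{eq:matrix_family} becomes $C_\Theta=M\Theta+(I-\Theta)$. Hence $\mu_{\rm L}$ coincides with $\mu_C$, and $m=n$. The plan is to prove $\mu_{\rm L}>0$ and then apply Theorem~\ref{thm:uniform_matrix_stationarity}. To close the argument we add the classical existence and uniqueness theory for P-matrix LCPs and a coercivity argument that yields boundedness of the iterates.

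\textbf{Step 1: $\mu_{\rm L}>0$.} The set $\mathcal D_{[0,1]}$ is compact and $\Theta\mapsto\sigma_{\min}(C_\Theta)$ is continuous, so the infimum is attained as a minimum. It therefore suffices to show that each $C_\Theta$ is nonsingular. Suppose $(I-\Theta+M\Theta)p=0$ with $p\neq0$, and set $w:=\Theta p$.
\begin{itemize}
\item If $w=0$, then $(I-\Theta)p=0$, so $p=\Theta p+(I-\Theta)p=0$, a contradiction.
\item If $w\neq0$, the P-matrix characterization gives an index $i$ with $w_i(Mw)_i>0$. Then $w_i\neq0$, so $\theta_i>0$. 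Componentwise, $(Mw)_i=-(1-\theta_i)p_i$, hence
\[
    w_i(Mw)_i=-\theta_i(1-\theta_i)p_i^2\leq0,
\]
again a contradiction.
\end{itemize}
This shows $\mu_{\rm L}=\mu_C>0$.

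\textbf{Step 2: stationary points solve the LCP.} By Theorem~\ref{thm:uniform_matrix_stationarity} (with $m=n$), every $\alpha$-projected stationary point lies in $\mathcal S$. The set $\mathcal S$ is exactly the solution set of the LCP \eqref{eq:lcp}, with $w=Mz-b$. By the classical theorem cited in \cite{cottle1992linear}, a P-matrix LCP has a unique solution. So $\mathcal S=\{y^\dagger\}$, and every stationary point equals $y^\dagger$.

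\textbf{Step 3: convergence of the iterates.} To invoke the second part of Theorem~\ref{thm:uniform_matrix_stationarity}, we need $\{y^k\}$ bounded. Lemma~\ref{lem:pg_descent} gives $\Phi(y^k)\leq\Phi(y^0)$, so it suffices to show that $\Phi$ is coercive on $\mathcal C$.

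For $y=(z,w)\in\mathcal C$, define $\xi:=z-w$ and $\theta_i:=1$ if $z_i>0$, $\theta_i:=0$ otherwise. Then $z=\Theta\xi$ and $w=-(I-\Theta)\xi$, so by \eqref{eq:face_residual_representation},
\[
    r(y)=C_\Theta\xi-b.
\]
Since $z\odot w=0$, we have $\|\xi\|=\|y\|$. Therefore
\[
    \sqrt{2\Phi(y)}\geq\mu_{\rm L}\|y\|-\|b\|,
\]
which is the same estimate used in Corollary~\ref{cor:gave_uniform_matrix_consequences}. Thus the iterates are bounded.

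Theorem~\ref{thm:uniform_matrix_stationarity} then gives $\Phi(y^k)\to0$ and that every accumulation point lies in $\mathcal S=\{y^\dagger\}$. A bounded sequence whose only accumulation point is $y^\dagger$ converges to it, which proves \eqref{eq:lcp_global_convergence}.

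The main obstacle is Step 1. The argument must handle fractional $\theta_i$ through the sign identity $w_i(Mw)_i=-\theta_i(1-\theta_i)p_i^2$, and it uses compactness to turn nonsingularity of each $C_\Theta$ into a uniform lower bound $\mu_{\rm L}>0$.
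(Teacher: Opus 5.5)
Your proposal is correct and follows essentially the same route as the paper: nonsingularity of each $C_\Theta$ via the P-matrix sign characterization (with the same identity $w_i(Mw)_i=-\theta_i(1-\theta_i)p_i^2$), then compactness, Theorem~\ref{thm:uniform_matrix_stationarity}, classical uniqueness, and a coercivity bound on $\mathcal C$ that makes the iterates bounded. The only difference is cosmetic: you use $\mu_{\rm L}$ directly in the coercivity estimate, whereas the paper uses $\widehat\mu_{\rm L}\geq\mu_{\rm L}$, taken as the minimum over binary selectors only.
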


\begin{proof}
For the LCP representation, \eqref{eq:matrix_family} gives
\(C_\Theta=I-\Theta+M\Theta\). Fix
$\Theta=\operatorname{diag}(d)\in\mathcal D_{[0,1]}$. To prove that
$C_\Theta$ is nonsingular, suppose that $C_\Theta p=0$ and set
$v:=\Theta p$. If $v=0$, then $(I-\Theta)p=0$ and hence $p=0$. Otherwise,
$C_\Theta p=0$ gives $Mv=-(I-\Theta)p$, so
\[
    v_i(Mv)_i=-d_i(1-d_i)p_i^2\leq0,
    \qquad \forall i.
\]
This contradicts the P-matrix characterization. Therefore $C_\Theta$ is
nonsingular for every $\Theta\in\mathcal D_{[0,1]}$. Continuity of
$\Theta\mapsto\sigma_{\min}(C_\Theta)$ and compactness of
$\mathcal D_{[0,1]}$ give \eqref{eq:lcp_uniform_matrix_modulus}. Theorem~
\ref{thm:uniform_matrix_stationarity} applies, and the classical P-matrix
theory gives a unique LCP solution; see \cite{cottle1992linear}.

For boundedness, let $\mathcal E$ be the finite set of diagonal matrices with
diagonal entries in $\{0,1\}$. Equation
\eqref{eq:lcp_uniform_matrix_modulus} gives
\(\widehat\mu_{\rm L}:=\min_{E\in\mathcal E}
\sigma_{\min}(I-E+ME)>0\).
For every $(z,w)\in\mathcal C$, taking $x=z-w$, and selecting a binary matrix $E$ having $E_{ii}$ equal to $1$ if $x_i\geq0$, while $E_{ii}$ equal to $0$ if $x_i<0$, we have
\[
    Mz-w=(I-E+ME)x,
    \qquad \|x\|=\|(z,w)\|.
\]
Therefore $\sqrt{2\Phi(z,w)}=\|Mz-w-b\|\geq \widehat\mu_{\rm L}\|(z,w)\|-\|b\|$.

If Assumption~\ref{ass:stepsize} holds, the generated sequence
lies in a bounded level set by Lemma~\ref{lem:pg_descent} and the
coercivity estimate. Theorem~\ref{thm:uniform_matrix_stationarity} and
uniqueness show that the entire sequence converges to
$(z^\dagger,w^\dagger)$, and continuity of $\Phi$ gives
\eqref{eq:lcp_global_convergence}.
\end{proof}

\subsection{Error bounds and linear convergence}
\label{subsec:error_bound_linear_convergence}

The preceding results give qualitative convergence to the zero-residual
solution set. Next we show the linear convergence rate. A linear rate requires two quantitative links: the equation
residual must control distance to $\mathcal S$, and the projected-gradient
residual must control the same distance. The second link is more delicate
because the projection can be set-valued and the stepsize may vary. To proceed, we need the following assumption:

\begin{assumption}
\label{ass:bounded_initial_level_set}
The initial sublevel set 
\[
\mathcal L_0:=\{y\in\mathcal C:\Phi(y)\leq\Phi(y^0)\}
\]
is bounded.
\end{assumption}
By sufficient descent, the iterate sequence remains in \(\mathcal L_0\).
Thus, Assumption~\ref{ass:bounded_initial_level_set} concerns only the region
visited by the algorithm, rather than the whole feasible set \(\mathcal C\).

The following proposition supplies the residual error bound. Part~(i) uses
standard Hoffman bounds on finitely many faces; part~(ii) uses the common
matrix family to obtain a global estimate.

\begin{proposition}
\label{prop:problem_residual_error_bounds}
Suppose that \(\mathcal S\neq\varnothing\). Then the following statements hold.
\begin{enumerate}
    \item[(i)] If Assumption~\ref{ass:bounded_initial_level_set} holds, then
    there exists \(\kappa_r>0\) such that
    \begin{equation}
    \label{eq:abstract_residual_error_bound}
        \operatorname{dist}(y,\mathcal S)
        \leq
        \kappa_r\|Hy-c\|
        =
        \kappa_r\sqrt{2\Phi(y)},
        \qquad \forall y\in\mathcal L_0.
    \end{equation}

    \item[(ii)] If \(m\geq n\) and \(\mu_C>0\), then
    \(\mathcal S=\{y^\dagger\}\), and
    \begin{equation}
    \label{eq:general_global_error_bound}
        \|y-y^\dagger\|
        \leq
        \frac{1}{\mu_C}\|Pu+Qv-c\|,
        \qquad
        \forall y=(u,v)\in\mathcal C.
    \end{equation}
\end{enumerate}
\end{proposition}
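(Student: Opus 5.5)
Part~(i) will be handled by splitting $\mathcal C$ into the finitely many faces $F_s\in\mathfrak F$ and applying Hoffman's bound on each. On a face, \eqref{eq:face_signed_parameterization} writes points as $y=(\Theta_s\xi,-(I-\Theta_s)\xi)$ with $D_s\xi\ge0$, and \eqref{eq:face_residual_representation} gives $r(y)=C_{\Theta_s}\xi-c$. Hence $\mathcal S_s:=\mathcal S\cap F_s$ is the polyhedron $\{\xi: D_s\xi\ge0,\ C_{\Theta_s}\xi=c\}$, mapped linearly and isometrically (since $\|y\|=\|\xi\|$) into $\mathbb R^{2n}$.

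Fix a face $F_s$ with $\mathcal S_s\neq\varnothing$. Hoffman's lemma gives a constant $\kappa_s$ such that, for every $y\in F_s$,
\[
\operatorname{dist}(y,\mathcal S)\le\operatorname{dist}(y,\mathcal S_s)\le\kappa_s\|C_{\Theta_s}\xi-c\|=\kappa_s\|Hy-c\|.
\]
The inequality constraint $D_s\xi\ge0$ contributes no residual because $y\in F_s$.

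The faces with $\mathcal S_s=\varnothing$ are the main obstacle, since there $r$ cannot be compared to the distance to $\mathcal S\cap F_s$. I will use compactness instead. The set $F_s\cap\mathcal L_0$ is compact by Assumption~\ref{ass:bounded_initial_level_set} and closedness. Because $\mathcal S_s=\varnothing$, the continuous function $\|Hy-c\|$ is strictly positive on this set, so it has a positive minimum $\gamma_s$. Meanwhile $\operatorname{dist}(y,\mathcal S)\le R_s$ there, where $R_s$ is finite because $\mathcal S\neq\varnothing$ and $\mathcal L_0$ is bounded. Hence $\operatorname{dist}(y,\mathcal S)\le (R_s/\gamma_s)\|Hy-c\|$ on $F_s\cap\mathcal L_0$. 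If $F_s\cap\mathcal L_0$ is empty, the face can be ignored.

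Since the faces cover $\mathcal C$, taking $\kappa_r$ as the maximum over the $2^n$ faces of $\kappa_s$ or $R_s/\gamma_s$ proves \eqref{eq:abstract_residual_error_bound}. The identity $\|Hy-c\|=\sqrt{2\Phi(y)}$ holds by definition of $\Phi$.

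For part~(ii), take any $y=(u,v)$ and $y'=(u',v')$ in $\mathcal C$, and set $x=u-v$ and $x'=u'-v'$. Then $\|y-y'\|$ equals $\|x-x'\|$ only when $y$ and $y'$ lie on a common face, so I will build a mixing diagonal $\Theta$ directly. Write
\[
Pu+Qv-(Pu'+Qv')=P(u-u')+Q(v-v').
\]
Coordinatewise, the goal is a scalar $\xi_i$ and a weight $\theta_i\in[0,1]$ with $u_i-u'_i=\theta_i\xi_i$ and $v_i-v'_i=-(1-\theta_i)\xi_i$. This is possible whenever $(u_i-u'_i)$ and $(v_i-v'_i)$ are not of the same strict sign. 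By complementarity of both pairs, $u_i-u'_i>0$ forces $u_i>0$, hence $v_i=0$, hence $v_i-v'_i=-v'_i\le0$; the other sign is symmetric. So the required sign condition always holds.

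Take $\xi_i:=(u_i-u'_i)-(v_i-v'_i)$, and $\theta_i:=(u_i-u'_i)/\xi_i$ when $\xi_i\neq0$, with $\theta_i$ arbitrary otherwise. Then $r(y)-r(y')=C_\Theta\xi$ with $\Theta\in\mathcal D_{[0,1]}$. Moreover $\|\xi\|\ge\|y-y'\|$, because $|a-b|^2\ge a^2+b^2$ when $ab\le0$.

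Since $m\ge n$ and $\mu_C>0$, $C_\Theta$ has uniform full column rank, so
\[
\|r(y)-r(y')\|\ge\mu_C\|\xi\|\ge\mu_C\|y-y'\|.
\]
Applied to two points of $\mathcal S$, this gives uniqueness, so $\mathcal S=\{y^\dagger\}$. Applied with $y'=y^\dagger$, it gives \eqref{eq:general_global_error_bound}.
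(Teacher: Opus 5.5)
Your proposal is correct and follows essentially the same route as the paper: part~(i) uses Hoffman's bound on each face meeting $\mathcal S$, plus a compactness ratio (maximal distance to $\mathcal S$ over minimal residual) on the remaining faces intersected with $\mathcal L_0$; part~(ii) writes $r(y)-r(y^\dagger)=C_\Theta d$ with $d=(u-v)-(u^\dagger-v^\dagger)$ and uses $\|y-y^\dagger\|\le\|d\|$. Your explicit coordinatewise sign argument ($ab\le 0$) for building $\theta_i\in[0,1]$ spells out the step the paper only asserts ``by complementarity.''
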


\begin{proof}
For each $F\in\mathfrak F$, set
\(\mathcal S_F:=\{y\in F:Hy-c=0\}\). If
\(\mathcal S_F\neq\varnothing\), Hoffman's error bound
\cite{hoffman1952approximate} guarantees a
constant \(\kappa_F>0\) such that
\[
    \operatorname{dist}(y,\mathcal S)
    \leq
    \operatorname{dist}(y,\mathcal S_F)
    \leq
    \kappa_F\|Hy-c\|,
    \qquad y\in F.
\]
If \(\mathcal S_F=\varnothing\), set
\(\mathcal K_F:=F\cap\mathcal L_0\).
Whenever \(\mathcal K_F\neq\varnothing\), compactness gives
\(\epsilon_F:=\min_{y\in\mathcal K_F}\|Hy-c\|>0\) and
\(R_F:=\max_{y\in\mathcal K_F}
\operatorname{dist}(y,\mathcal S)<\infty\).
Consequently,
\[
    \operatorname{dist}(y,\mathcal S)
    \leq
    \frac{R_F}{\epsilon_F}\|Hy-c\|,
    \qquad y\in\mathcal K_F.
\]
Taking the maximum of these finitely many constants over all faces $F\in\mathfrak F$ proves part~(i).

For part~(ii), fix \(y=(u,v)\in\mathcal C\) and
\(y^\dagger=(u^\dagger,v^\dagger)\in\mathcal S\), and set
\(d:=(u-v)-(u^\dagger-v^\dagger)\). By complementarity, there exists some
\(\Theta\in\mathcal D_{[0,1]}\) such that
\(u-u^\dagger=\Theta d\) and
\(v-v^\dagger=-(I-\Theta)d\). As $Pu^\dagger+Qv^\dagger-c=0$, it follows that
\[
\begin{aligned}
    Pu+Qv-c
    &=
    P(u-u^\dagger)+Q(v-v^\dagger)\\
    &=
    \bigl(P\Theta-Q(I-\Theta)\bigr)d
    =C_\Theta d.
\end{aligned}
\]
Moreover, because every diagonal entry of \(\Theta\) lies in \([0,1]\),
\[
\begin{aligned}
    \|y-y^\dagger\|^2
    &=\|\Theta d\|^2+\|(I-\Theta)d\|^2
    \leq\|d\|^2.
\end{aligned}
\]
The definition of \(\mu_C\) now gives
\[
    \|Pu+Qv-c\|
    \geq
    \mu_C\|d\|
    \geq
    \mu_C\|y-y^\dagger\|,
\]
which proves \eqref{eq:general_global_error_bound}.
Applying this bound to any other point in \(\mathcal S\) proves uniqueness.
\end{proof}

The bounded-sublevel-set assumption in part~(i) controls faces that do not
intersect $\mathcal S$. The stronger matrix condition in part~(ii) removes
this localization and also implies uniqueness.

The next corollary specializes the global bound in part~(ii) to the two
problem classes considered in this paper.

\begin{corollary}
\label{cor:gave_lcp_residual_error_bounds}
The following statements hold.
\begin{enumerate}
    \item[(i)] Consider a GAVE with \(m=n\) and
    \(\mathcal S\neq\varnothing\). Define
    \[
        \mu_{\rm G}
        :=
        \inf_{D\in\mathcal D_{[-1,1]}}
        \sigma_{\min}(A-BD).
    \]
    If \(\mu_{\rm G}>0\), then \(\mathcal S=\{y^\dagger\}\), where
    \(y^\dagger=((x^\dagger)^+,(x^\dagger)^-)\), and
    \[
        \|y-y^\dagger\|
        \leq
        \frac{1}{\mu_{\rm G}}
        \|Ax-B|x|-b\|,
        \qquad
        \forall y=(x^+,x^-)\in\mathcal C.
    \]
    In particular, if
    \(\delta=\sigma_{\min}(A)-\|B\|_2>0\), then
    \(\mu_{\rm G}\geq\delta\).

    \item[(ii)] Consider an LCP with a P-matrix $M$, and let
    \(\mu_{\rm L}>0\) be defined by
    \eqref{eq:lcp_uniform_matrix_modulus}. Then
    \(\mathcal S=\{y^\dagger\}\), where
    \(y^\dagger=(z^\dagger,w^\dagger)\) is the unique LCP solution. Moreover,
    \[
        \|y-y^\dagger\|
        \leq
        \frac{1}{\mu_{\rm L}}
        \|Mz-w-b\|,
        \qquad
        \forall y=(z,w)\in\mathcal C.
    \]
\end{enumerate}
\end{corollary}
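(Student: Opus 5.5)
The plan is to derive both parts as direct specializations of Proposition~\ref{prop:problem_residual_error_bounds}(ii). The work lies in identifying $C_\Theta$ for each problem class, checking that $m\geq n$ (indeed $m=n$ in both cases), and relating the generic residual $Pu+Qv-c$ to the problem-specific residual. Since $\mathcal S\neq\varnothing$ is assumed in (i) and follows from P-matrix theory in (ii), the proposition applies once $\mu_C>0$ is confirmed.

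For part~(i), I will use the GAVE representation of Example~\ref{ex:GAVE}. As computed in the proof of Corollary~\ref{cor:gave_uniform_matrix_consequences}, $C_\Theta=(A-B)\Theta+(A+B)(I-\Theta)=A-BD$ with $D=2\Theta-I$. The map $\Theta\mapsto 2\Theta-I$ is a bijection from $\mathcal D_{[0,1]}$ onto $\mathcal D_{[-1,1]}$, so $\mu_C=\mu_{\rm G}$. With $m=n$, part~(ii) of the proposition then gives uniqueness $\mathcal S=\{y^\dagger\}$ together with the stated bound. Two identifications remain:
\begin{itemize}
\item For $y=(x^+,x^-)$, the residual is $Pu+Qv-c=(A-B)x^+-(A+B)x^--b=Ax-B|x|-b$.
\item Every point of $\mathcal S$ has the form $(x^+,x^-)$ for a GAVE solution $x$, so the unique element is $y^\dagger=((x^\dagger)^+,(x^\dagger)^-)$.
\end{itemize}
For the lower bound $\mu_{\rm G}\geq\delta$, I will use Weyl's inequality for singular values: $\sigma_{\min}(A-BD)\geq\sigma_{\min}(A)-\|BD\|_2\geq\sigma_{\min}(A)-\|B\|_2$, since $\|D\|_2\leq1$.

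For part~(ii), I will use the LCP representation of Example~\ref{ex:LCP}, which gives $C_\Theta=M\Theta+(I-\Theta)$. Corollary~\ref{cor:lcp_p_matrix_consequences} already shows that the minimum $\mu_{\rm L}$ of its smallest singular value over $\mathcal D_{[0,1]}$ is positive, so $\mu_C=\mu_{\rm L}>0$ and $m=n$. The residual is $Pu+Qv-c=Mz-w-b$. Nonemptiness and uniqueness of the LCP solution follow from classical P-matrix theory, already cited in that corollary, and proposition~(ii) independently confirms $\mathcal S=\{(z^\dagger,w^\dagger)\}$ with the stated bound.

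There is no real obstacle, since everything reduces to bookkeeping. The step that needs the most care is matching the sign conventions in $C_\Theta$ ($Q=-A-B$, respectively $Q=-I$) so that $C_\Theta$ has exactly the stated form, including the direction of the bijection between $\Theta$ and $D$.
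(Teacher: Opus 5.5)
Your proposal is correct and follows the paper's proof essentially step for step: you identify $C_\Theta=A-BD$ with $D=2\Theta-I$ (so $\mu_C=\mu_{\rm G}$) and $C_\Theta=I-\Theta+M\Theta$ (so $\mu_C=\mu_{\rm L}$), invoke part~(ii) of Proposition~\ref{prop:problem_residual_error_bounds}, and bound $\sigma_{\min}(A-BD)\geq\sigma_{\min}(A)-\|B\|_2$. Your explicit residual identifications, and your note that $\mathcal S\neq\varnothing$ must be supplied (assumed in (i), from P-matrix theory in (ii)), are details the paper leaves implicit.
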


\begin{proof}
For the representation of \eqref{eq:gave} in \eqref{eq:general_model},
\(C_\Theta=A-B(2\Theta-I)\). As \(\Theta\)
ranges over \(\mathcal D_{[0,1]}\), \(D:=2\Theta-I\) ranges over
\(\mathcal D_{[-1,1]}\), so \(\mu_C=\mu_{\rm G}\). For the representation of
\eqref{eq:lcp} in \eqref{eq:general_model},
\(C_\Theta=I-\Theta+M\Theta\), and
\eqref{eq:lcp_uniform_matrix_modulus} gives the corresponding value of
\(\mu_C\).

The uniqueness statements and residual error bounds now follow directly from
part~(ii) of Proposition~\ref{prop:problem_residual_error_bounds} under these
two representations. Finally,
\(\sigma_{\min}(A-BD)\geq\sigma_{\min}(A)-\|B\|_2\) for every
\(D\in\mathcal D_{[-1,1]}\), which gives \(\mu_{\rm G}\geq\delta\).
\end{proof}

The preceding convergence results allow $\alpha_k$ to vary arbitrarily in
$[\underline\alpha,\overline\alpha]$. For the linear-rate analysis, however,
the projected-gradient error-bound constant may depend on the stepsize.
We therefore first establish the bound for each fixed $\alpha$ and then
obtain a uniform constant by restricting the stepsizes to a finite set
$\mathcal A$. This still permits variable stepsizes and avoids requiring
the error-bound constants to be uniformly bounded over the entire interval.

As specified in \eqref{eq:scalar_projection_rule}, the algorithm uses a
single-valued selection of $\Pi_{\mathcal C}$, choosing
$(\xi_i^+,0)$ whenever $\xi_i^+=\eta_i^+$. However, the underlying Euclidean
projection is nevertheless set-valued when $\xi_i^+=\eta_i^+>0$, because
both $(\xi_i^+,0)$ and $(0,\eta_i^+)$ are then distinct projections.
Define the set-valued projected-gradient mapping
\[
    \mathcal G_\alpha(y)
    :=
    \left\{
        \frac{1}{\alpha}(y-\widehat y):
        \widehat y\in
        \Pi_{\mathcal C}
        \bigl(y-\alpha\nabla\Phi(y)\bigr)
    \right\},
\]
and its least-norm residual
\(
    g_\alpha(y)
    :=
    \operatorname{dist}\bigl(0,\mathcal G_\alpha(y)\bigr).
\)

The projection can be represented by finitely
many affine mappings on polyhedral regions. Hoffman bounds then apply to each
piece, including pieces whose zero set is empty.

\begin{proposition}
\label{prop:pg_error_bound}
Suppose that \(\mathcal S\neq\varnothing\),
Assumption~\ref{ass:bounded_initial_level_set} holds, and
condition~\eqref{eq:projected_stationarity_implies_solution} holds.
For every fixed
\(\alpha\in[\underline\alpha,\overline\alpha]\), there exists
\(\kappa_\alpha>0\) such that
\begin{equation}
\label{eq:fixed_pg_error_bound}
    \operatorname{dist}(y,\mathcal S)
    \leq
    \kappa_\alpha g_\alpha(y),
    \qquad
    \forall y\in\mathcal L_0.
\end{equation}
Consequently, for every selected projection
\(\widehat y\in\Pi_{\mathcal C}
(y-\alpha\nabla\Phi(y))\),
\begin{equation}
\label{eq:fixed_selected_pg_error_bound}
    \operatorname{dist}(y,\mathcal S)
    \leq
    \frac{\kappa_\alpha}{\alpha}
    \|y-\widehat y\|,
    \qquad
    \forall y\in\mathcal L_0.
\end{equation}
\end{proposition}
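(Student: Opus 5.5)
The plan is to decompose \(\mathcal L_0\) into finitely many polyhedral pieces on which every selection of \(\Pi_{\mathcal C}(y-\alpha\nabla\Phi(y))\) is affine in \(y\), apply Hoffman's bound piecewise, and handle pieces with no zeros by compactness, exactly in the spirit of part~(i) of Proposition~\ref{prop:problem_residual_error_bounds}. Fix \(\alpha\). The point \(z(y):=y-\alpha\nabla\Phi(y)=(I-\alpha H^\top H)y+\alpha H^\top c\) is affine in \(y\). By the coordinatewise rule \eqref{eq:scalar_projection_rule}, each projection \(\widehat y\) is determined by a pattern \(\omega\). For each \(i\), \(\omega\) records which axis is chosen and whether the corresponding coordinate of \(z\) is positive or clipped to zero. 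Each pattern is valid on a polyhedral region
\[
R_\omega:=\{y\in F_s:\ \text{linear inequalities in } z(y) \text{ encoding } \omega\},
\]
where we also fix the face \(F_s\in\mathfrak F\) containing \(y\) so that \(y\) is parametrized linearly. These inequalities include \(\xi_i^+\ge\eta_i^+\) on the chosen side, or the reverse. Taking closed inequalities (\(\geq\) for both choices when ties occur) makes each \(R_\omega\) a closed polyhedron. On \(R_\omega\) the candidate \(\widehat y_\omega(y)\) is an affine map, and it lies in \(\Pi_{\mathcal C}(z(y))\) because ties are allowed on both sides. Conversely, every element of \(\Pi_{\mathcal C}(z(y))\) equals \(\widehat y_\omega(y)\) for some \(\omega\) with \(y\in R_\omega\). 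Hence
\[
g_\alpha(y)=\min_{\omega:\,y\in R_\omega}\tfrac1\alpha\|y-\widehat y_\omega(y)\|,
\]
and there are only finitely many \(\omega\).

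Next, for each \(\omega\) let \(Z_\omega:=\{y\in R_\omega: y=\widehat y_\omega(y)\}\), a polyhedron defined by linear equalities and inequalities. Every point of \(Z_\omega\) is \(\alpha\)-projected stationary, so \(Z_\omega\subseteq\mathcal S\) by \eqref{eq:projected_stationarity_implies_solution}.
\begin{itemize}
\item If \(Z_\omega\neq\varnothing\), Hoffman's bound \cite{hoffman1952approximate} gives \(\kappa_\omega\) with
\[
\operatorname{dist}(y,\mathcal S)\le\operatorname{dist}(y,Z_\omega)\le\kappa_\omega\|y-\widehat y_\omega(y)\| \quad \text{for all } y\in R_\omega.
\]
Only the equality residual enters, since the inequalities hold on \(R_\omega\).
\item If \(Z_\omega=\varnothing\), consider \(K_\omega:=R_\omega\cap\mathcal L_0\). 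This set is compact by Assumption~\ref{ass:bounded_initial_level_set} and closedness of \(\mathcal L_0\). The continuous function \(\|y-\widehat y_\omega(y)\|\) has no zero on \(K_\omega\), so it has a positive minimum \(\epsilon_\omega\). With \(R_\omega':=\max_{K_\omega}\operatorname{dist}(\cdot,\mathcal S)<\infty\), we get \(\operatorname{dist}(y,\mathcal S)\le(R_\omega'/\epsilon_\omega)\|y-\widehat y_\omega(y)\|\) on \(K_\omega\).
\end{itemize}
Taking \(\kappa_\alpha:=\alpha\max_\omega\{\kappa_\omega,R'_\omega/\epsilon_\omega\}\) and applying the relevant piece to a minimizing \(\omega\) gives \eqref{eq:fixed_pg_error_bound}. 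Then \eqref{eq:fixed_selected_pg_error_bound} follows immediately from \(g_\alpha(y)\le\frac1\alpha\|y-\widehat y\|\).

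The main obstacle is the bookkeeping in the first step. We must check that the piecewise description exactly captures the set-valued projection, including the ties \(\xi_i^+=\eta_i^+\) and the clipping \(\xi_i\le0\). We must also check that the regions are closed polyhedra on which the selections are genuinely projections, so that the minimum over \(\omega\) equals \(g_\alpha\). I would verify this coordinatewise, enumerating the four cases (\(u\)-axis or \(v\)-axis, positive or clipped) and using the squared-distance difference \((\eta_i^+)^2-(\xi_i^+)^2\) from the projection proposition. The inequality \(\xi_i^+\ge\eta_i^+\) is not linear in general, but it becomes linear once the signs of \(\xi_i\) and \(\eta_i\) are fixed by further splitting. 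That further splitting is what keeps every piece polyhedral.
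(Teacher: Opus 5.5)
Your proposal is correct and follows essentially the same route as the paper. The paper likewise splits $\mathcal C$ into finitely many closed polyhedral pieces $D_j=F\cap z_\alpha^{-1}(\mathcal R_{E_\xi,E_\eta,s})$, indexed by face, sign pattern of $(\xi,\eta)$, and tie-allowing axis choice, on each of which the projection selection is affine. It then applies Hoffman's bound on pieces with a nonempty zero set (contained in $\mathcal S$ by \eqref{eq:projected_stationarity_implies_solution}), uses compactness of $\mathcal L_0$ on pieces without zeros, and takes the maximum of the finitely many constants; the extra factor $\alpha$ in your $\kappa_\alpha$ only reflects measuring the residual as $\|y-\widehat y_\omega(y)\|$ instead of $\alpha^{-1}\|y-\widehat y_\omega(y)\|$.
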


\begin{proof}
Fix \(\alpha\in[\underline\alpha,\overline\alpha]\) and define the affine
map \(z_\alpha(y):=y-\alpha\nabla\Phi(y)\). Recall from
\eqref{eq:finite_face_collection} that
\(\mathcal C=\bigcup_{F\in\mathfrak F}F\), where every $F$ is a polyhedral
cone.
Let \(\mathcal E:=\{\operatorname{diag}(e):e\in\{0,1\}^n\}\). For
\(E_\xi,E_\eta\in\mathcal E\), define the sign region
\(
    \mathcal R_{E_\xi,E_\eta}
    :=
    \left\{(\xi,\eta)\in\mathbb R^{2n}:
    (2E_\xi-I)\xi\geq0,\quad
    (2E_\eta-I)\eta\geq0
    \right\}.
\)
For every \((\xi,\eta)\in\mathcal R_{E_\xi,E_\eta}\),
\(\xi^+=E_\xi\xi\) and \(\eta^+=E_\eta\eta\). Moreover,
\(\mathbb R^{2n}=\bigcup_{E_\xi,E_\eta\in\mathcal E}
\mathcal R_{E_\xi,E_\eta}\). Let \(s\in\{-1,1\}^n\).
Define
\[
\begin{aligned}
    \mathcal R_{E_\xi,E_\eta,s}
    :=\bigl\{(\xi,\eta)\in
    \mathcal R_{E_\xi,E_\eta}:\;&
    (E_\xi\xi)_i\geq(E_\eta\eta)_i
    &&\text{if }s_i=1,\\
    & (E_\eta\eta)_i\geq(E_\xi\xi)_i
    &&\text{if }s_i=-1,\quad i=1,\ldots,n
    \bigr\}.
\end{aligned}
\]
Then \(\mathcal R_{E_\xi,E_\eta}
=
\bigcup_{s\in\{-1,1\}^n}
\mathcal R_{E_\xi,E_\eta,s}.\) Define the linear
projection selection \(p_{E_\xi,E_\eta,s}\) coordinatewise by
\[
    \bigl(p_{E_\xi,E_\eta,s}(\xi,\eta)\bigr)_i
    :=
    \begin{cases}
        ((E_\xi\xi)_i,0),&s_i=1,\\
        (0,(E_\eta\eta)_i),&s_i=-1.
    \end{cases}
\]
If \((E_\xi\xi)_i=(E_\eta\eta)_i>0\), both choices of \(s_i\) satisfy the
defining inequalities and give the two distinct projections at coordinate
\(i\).

Let \(\mathcal J_\alpha\) index all tuples
\(j=(F,E_\xi,E_\eta,s)\), where $F\in\mathfrak F$. For each such tuple,
define
\(
    D_j
    :=
    F\cap
    z_\alpha^{-1}(\mathcal R_{E_\xi,E_\eta,s}),
\)
and
\(
    p_j(\xi,\eta):=p_{E_\xi,E_\eta,s}(\xi,\eta).
\)
Since \(z_\alpha\) is affine, every \(D_j\) is a closed polyhedral subset
of \(\mathcal C\). Moreover,
\[
    \mathcal C=\bigcup_{j\in\mathcal J_\alpha}D_j,
    \qquad
    \Pi_{\mathcal C}(z_\alpha(y))
    =
    \{p_j(z_\alpha(y)):j\in\mathcal J_\alpha,\ y\in D_j\}.
\]
For each \(j\in\mathcal J_\alpha\), define the affine mapping
\(G_j(y):=\alpha^{-1}(y-p_j(z_\alpha(y)))\), and set
\(\mathcal J_\alpha(y):=\{j\in\mathcal J_\alpha:y\in D_j\}\). By this definition, 
$p_j(z_\alpha(y))$ is one possible next iteration of $z_\alpha(y)$, while $\mathcal J_\alpha(y)$ 
collects all the possibilities. Then
\(
    \mathcal G_\alpha(y)
    =
    \{G_j(y):j\in\mathcal J_\alpha(y)\}.
\)
For every \(j\in\mathcal J_\alpha\), set
\(\mathcal S_j:=\{y\in D_j:G_j(y)=0\}\), which consists of all the stationary points in the area $D_j$.
For every $y\in\mathcal S$, we have $r(y)=0$ and hence
$\nabla\Phi(y)=H^\top r(y)=0$. Since $y\in\mathcal C$,
\(
    y=\Pi_{\mathcal C}(y)
    =
    \Pi_{\mathcal C}
    \bigl(y-\alpha\nabla\Phi(y)\bigr),
\)
and therefore $\mathcal G_\alpha(y)=\{0\}$.
Conversely, as we assume \eqref{eq:projected_stationarity_implies_solution} holds, 
every point in \(\mathcal G_\alpha^{-1}(0)\) belongs to \(\mathcal S\).
Hence
\[
    \mathcal G_\alpha^{-1}(0)
    =
    \bigcup_{j\in\mathcal J_\alpha}\mathcal S_j
    =
    \mathcal S.
\]

Consider  an index \(j\) for which \(\mathcal S_j\neq\varnothing\).
The set \(\mathcal S_j\) is described by the polyhedral constraints defining
\(D_j\), together with the affine equation \(G_j(y)=0\). Hoffman's error
bound \cite{hoffman1952approximate}  provides \(h_j>0\) such that,
for every \(y\in D_j\),
\begin{equation}
\label{eq:piecewise_hoffman_bound}
    \operatorname{dist}(y,\mathcal S)
    \leq
    \operatorname{dist}(y,\mathcal S_j)
    \leq
    h_j\|G_j(y)\|,
\end{equation}
where the first inequality follows from
\(\mathcal S_j\subseteq\mathcal S\).

By Assumption~\ref{ass:bounded_initial_level_set}, the closed level set
\(\mathcal L_0\) is compact, and 
\(\mathcal S\) is a nonempty closed subset of \(\mathcal L_0\). Set
\(R_0:=\max_{y\in\mathcal L_0}
\operatorname{dist}(y,\mathcal S)<\infty\). When \(D_j\cap\mathcal L_0\neq\varnothing\) and
\(\mathcal S_j=\varnothing\), we obtain
\(\delta_j:=\min_{y\in D_j\cap\mathcal L_0}\|G_j(y)\|>0\) , following the same compactness argument used for the case
\(\mathcal S_F=\varnothing\) in the proof of
Proposition~\ref{prop:problem_residual_error_bounds}.
Consequently, if $D_j$ doesn't contain any point in $S$,
\begin{equation}
\label{eq:empty_piece_error_bound}
    \operatorname{dist}(y,\mathcal S)
    \leq
    \frac{R_0}{\delta_j}\|G_j(y)\|,
    \qquad y\in D_j\cap\mathcal L_0.
\end{equation}

Combining these two cases, we define the index set
\(
    \mathcal J_\alpha^0
    :=
    \{j\in\mathcal J_\alpha:
    D_j\cap\mathcal L_0\neq\varnothing\},
\)
which is finite and nonempty. For every \(j\in\mathcal J_\alpha^0\), set
\(
    c_j
    :=
    \begin{cases}
        h_j,&\mathcal S_j\neq\varnothing,\\
        R_0/\delta_j,&\mathcal S_j=\varnothing.
    \end{cases} 
\), and 
\(
    \kappa_\alpha
    :=
    \max_{j\in\mathcal J_\alpha^0}c_j.
\)
For any \(y\in\mathcal L_0\), choose
\(
    j_y
    \in
    \underset{j\in\mathcal J_\alpha(y)}{\operatorname{argmin}}
    \|G_j(y)\|.
\)
Then \(j_y\in\mathcal J_\alpha^0\) and
\(g_\alpha(y)=\|G_{j_y}(y)\|\). Applying
\eqref{eq:piecewise_hoffman_bound} or
\eqref{eq:empty_piece_error_bound}, gives
\[
    \operatorname{dist}(y,\mathcal S)
    \leq
    c_{j_y}\|G_{j_y}(y)\|
    \leq
    \kappa_\alpha g_\alpha(y),
\]
which proves \eqref{eq:fixed_pg_error_bound}.

Finally, for any
\(\widehat y\in\Pi_{\mathcal C}(y-\alpha\nabla\Phi(y))\), the representation
of \(\mathcal G_\alpha(y)\) gives an index
\(j\in\mathcal J_\alpha(y)\) such that
\(\alpha^{-1}(y-\widehat y)=G_j(y)\). Therefore
\[ 
    \operatorname{dist}(y,\mathcal S)
    \leq
    \kappa_\alpha g_\alpha(y)
    \leq
    \kappa_\alpha\|G_j(y)\|
    =
    \frac{\kappa_\alpha}{\alpha}\|y-\widehat y\|,
\]
which proves \eqref{eq:fixed_selected_pg_error_bound}.
\end{proof}

This proposition controls the least-norm element of the full set-valued
mapping, and hence every possible projection selected by the algorithm. Consequently,
the particular selection specified in \eqref{eq:scalar_projection_rule}
requires no separate error-bound argument.

\begin{corollary}
\label{cor:finite_stepsize_pg_error_bound}
Suppose that the assumptions of
Proposition~\ref{prop:pg_error_bound} hold, and let
\(\mathcal A\subseteq
[\underline\alpha,\overline\alpha]\) be finite and nonempty.  Then
\[
    \operatorname{dist}(y,\mathcal S)
    \leq
    \kappa_G g_\alpha(y),
    \qquad
    \forall y\in\mathcal L_0,\quad \forall \alpha\in\mathcal A,
\]
where
\(
    \kappa_G:=\max_{\alpha\in\mathcal A}\kappa_\alpha<\infty.
\)
Consequently, for every \(\alpha\in\mathcal A\) and every selected
projection
\(\widehat y\in\Pi_{\mathcal C}
(y-\alpha\nabla\Phi(y))\),
\begin{equation}
\label{eq:selected_pg_error_bound}
    \operatorname{dist}(y,\mathcal S)
    \leq
    \frac{\kappa_G}{\alpha}
    \left\|
        (y-\widehat y)
    \right\|, \qquad \forall y\in\mathcal L_0.
\end{equation}
\end{corollary}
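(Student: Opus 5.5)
The corollary follows directly from Proposition~\ref{prop:pg_error_bound}, applied once for each stepsize in the finite set $\mathcal A$. Its hypotheses are $\mathcal S\neq\varnothing$, Assumption~\ref{ass:bounded_initial_level_set}, and condition~\eqref{eq:projected_stationarity_implies_solution}. None of these depends on the stepsize, and the corollary assumes all three. Because $\mathcal A\subseteq[\underline\alpha,\overline\alpha]$, the proposition applies at every $\alpha\in\mathcal A$. It gives a constant $\kappa_\alpha>0$ such that $\operatorname{dist}(y,\mathcal S)\leq\kappa_\alpha g_\alpha(y)$ for all $y\in\mathcal L_0$.

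Since $\mathcal A$ is finite and nonempty, $\kappa_G:=\max_{\alpha\in\mathcal A}\kappa_\alpha$ is a maximum over finitely many positive reals. Hence $\kappa_G$ is finite and positive, and $\kappa_\alpha\leq\kappa_G$ for every $\alpha\in\mathcal A$. Because $g_\alpha(y)\geq0$, we get $\operatorname{dist}(y,\mathcal S)\leq\kappa_\alpha g_\alpha(y)\leq\kappa_G g_\alpha(y)$ for all $y\in\mathcal L_0$ and all $\alpha\in\mathcal A$. This is the first claim.

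For \eqref{eq:selected_pg_error_bound}, fix $\alpha\in\mathcal A$, $y\in\mathcal L_0$, and $\widehat y\in\Pi_{\mathcal C}(y-\alpha\nabla\Phi(y))$. By the definition of $\mathcal G_\alpha(y)$, the vector $\alpha^{-1}(y-\widehat y)$ belongs to $\mathcal G_\alpha(y)$. Since $g_\alpha(y)$ is the distance from $0$ to this set, $g_\alpha(y)\leq\alpha^{-1}\|y-\widehat y\|$. Combining this with the first claim gives the bound with constant $\kappa_G/\alpha$. Equivalently, one can take \eqref{eq:fixed_selected_pg_error_bound} directly and replace $\kappa_\alpha$ by $\kappa_G$.

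There is no real obstacle here. The only point to note is why $\mathcal A$ must be finite: Proposition~\ref{prop:pg_error_bound} gives no control on how $\kappa_\alpha$ varies with $\alpha$. Its Hoffman constants come from the $\alpha$-dependent polyhedral pieces $D_j$, so over the whole interval they could be unbounded. Finiteness of $\mathcal A$ avoids this issue.
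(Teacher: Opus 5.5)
Your proposal is correct and matches the paper's proof, which simply applies Proposition~\ref{prop:pg_error_bound} at each $\alpha\in\mathcal A$ and takes the maximum of the finitely many constants $\kappa_\alpha$. Your explicit derivation of \eqref{eq:selected_pg_error_bound} via $g_\alpha(y)\leq\alpha^{-1}\|y-\widehat y\|$ is the same step the paper uses at the end of that proposition's proof.
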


\begin{proof}
Apply Proposition~\ref{prop:pg_error_bound} to every
\(\alpha\in\mathcal A\) and take the maximum of the finitely many resulting
constants.
\end{proof}

\begin{remark}
\label{rem:pg_error_bound_scope}
The finite-set condition includes a fixed stepsize and finite-grid
backtracking rules while retaining variable stepsizes.  If the stepsizes
range over the whole interval
\([\underline\alpha,\overline\alpha]\), the same conclusions remain valid
provided that the Hoffman constants of the finitely many polyhedral pieces
are uniformly bounded on
that interval.  The finite-set condition avoids imposing this additional
uniform bound on the main convergence theory.
\end{remark}

With sufficient descent and the projected-gradient error bound, 
both objective decrease and distance to the solution set can be derived.

\begin{theorem}
\label{thm:error_bound_linear_convergence}
Suppose that Assumption~\ref{ass:stepsize} and the assumptions of
Proposition~\ref{prop:pg_error_bound} hold, and that
\(\alpha_k\in\mathcal A\) for all \(k\), where
\(\mathcal A\subseteq[\underline\alpha,\overline\alpha]\) is finite and
nonempty.
Let \(\kappa_r>0\) satisfy \eqref{eq:abstract_residual_error_bound}. Let \(\kappa_G\) be given by
Corollary~\ref{cor:finite_stepsize_pg_error_bound}, and enlarge
\(\kappa_G\), if necessary, so that
\(
    \eta
    :=
    \frac{2c_d\underline\alpha^2}{L\kappa_G^2}
    \in(0,1)
\)
and \(\theta:=1-\eta\in(0,1).\)
Then the safeguarded sequence satisfies
\begin{align}
    \Phi(y^k)
    &\leq
    \theta^k\Phi(y^0),
    \label{eq:linear_objective_rate}\\
    \operatorname{dist}(y^k,\mathcal S)
    &\leq
    \kappa_r\sqrt{2\Phi(y^0)}\,\theta^{k/2}.
    \label{eq:linear_iterate_rate}
\end{align}
\end{theorem}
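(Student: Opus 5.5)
Combine the sufficient-descent inequality of Lemma~\ref{lem:pg_descent} with an upper bound on $\Phi$ in terms of the projected step. This yields a one-step contraction $\Phi(y^{k+1})\le\theta\Phi(y^k)$. Lemma~\ref{lem:pg_descent} gives
\[
    \Phi(y^{k+1})\le\Phi(y^k)-c_d\|\bar y^{k+1}-y^k\|^2 .
\]
It remains to show that $\|\bar y^{k+1}-y^k\|^2$ dominates a fixed multiple of $\Phi(y^k)$.

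Since the iterates lie in $\mathcal L_0$ by descent and $\alpha_k\in\mathcal A$, the bound \eqref{eq:selected_pg_error_bound} applied with $\widehat y=\bar y^{k+1}$ gives
\[
    \operatorname{dist}(y^k,\mathcal S)\le\frac{\kappa_G}{\alpha_k}\|y^k-\bar y^{k+1}\| .
\]
Next, upper-bound $\Phi(y^k)$ by the squared distance. Let $y^\star\in\Pi_{\mathcal S}(y^k)$, which exists because $\mathcal S$ is closed and nonempty. Then $Hy^\star=c$, so
\[
    \Phi(y^k)=\tfrac12\|H(y^k-y^\star)\|^2\le\tfrac L2\operatorname{dist}(y^k,\mathcal S)^2 .
\]
Chaining these,
\[
    \Phi(y^k)\le\frac{L\kappa_G^2}{2\alpha_k^2}\|\bar y^{k+1}-y^k\|^2\le\frac{L\kappa_G^2}{2\underline\alpha^2}\|\bar y^{k+1}-y^k\|^2 ,
\]
that is, $c_d\|\bar y^{k+1}-y^k\|^2\ge\eta\,\Phi(y^k)$ with $\eta=2c_d\underline\alpha^2/(L\kappa_G^2)$. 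Substituting into the descent inequality gives $\Phi(y^{k+1})\le(1-\eta)\Phi(y^k)$, and induction gives \eqref{eq:linear_objective_rate}.

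Enlarging $\kappa_G$ only weakens the error bound, so it stays valid. It also guarantees $\eta<1$. Positivity of $\eta$ follows from $c_d>0$. In any case $\eta\le1$ is automatic, since otherwise $\Phi(y^{k+1})<0$ whenever $\Phi(y^k)>0$.

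For \eqref{eq:linear_iterate_rate}, apply part~(i) of Proposition~\ref{prop:problem_residual_error_bounds} at $y^k\in\mathcal L_0$ to get $\operatorname{dist}(y^k,\mathcal S)\le\kappa_r\sqrt{2\Phi(y^k)}$, then insert the objective rate to obtain $\kappa_r\sqrt{2\Phi(y^0)}\,\theta^{k/2}$. Part~(i) requires $\mathcal S\neq\varnothing$ and Assumption~\ref{ass:bounded_initial_level_set}, and both are included among the assumptions of Proposition~\ref{prop:pg_error_bound}.

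The main thing to check is the use of the error bound at the \emph{selected} projection $\bar y^{k+1}$ with the actual stepsize $\alpha_k$. The selection rule \eqref{eq:scalar_projection_rule} returns an element of $\Pi_{\mathcal C}(y^k-\alpha_k\nabla\Phi(y^k))$, and Corollary~\ref{cor:finite_stepsize_pg_error_bound} covers every selection and every $\alpha\in\mathcal A$, so this is fine. One also needs the refinement step not to break the chain. This holds because the descent lemma already accounts for the safeguard via \eqref{eq:refinement_no_increase}, and the error bound is evaluated at $y^k$, the actual current iterate, which lies in $\mathcal L_0$.
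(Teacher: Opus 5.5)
Your proposal is correct and follows the paper's proof essentially step for step: the paper likewise bounds $\Phi(y^k)\le\frac L2\operatorname{dist}^2(y^k,\mathcal S)\le\frac{L\kappa_G^2}{2}\|G_k\|^2$ using a nearest solution and \eqref{eq:selected_pg_error_bound}, combines this with Lemma~\ref{lem:pg_descent} and $\alpha_k\ge\underline\alpha$ to obtain $\Phi(y^{k+1})\le\theta\Phi(y^k)$, and then applies \eqref{eq:abstract_residual_error_bound} to get the distance rate. The only difference is cosmetic: the paper writes the argument in terms of $G_k=(y^k-\bar y^{k+1})/\alpha_k$ instead of the raw displacement.
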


\begin{proof}
Let
\[
    G_k
    :=
    G_{\alpha_k}(y^k;\bar y^{k+1})
    =
    \frac{1}{\alpha_k}(y^k-\bar y^{k+1}).
\]
Choose \(y_*^k\in\Pi_{\mathcal S}(y^k)\).  Since
\(r(y_*^k)=0\), \(\|H\|_2^2=L\), and
\eqref{eq:selected_pg_error_bound} holds,
\[ 
    \Phi(y^k)
    =
    \frac12\|H(y^k-y_*^k)\|^2
    \leq
    \frac{L}{2}\operatorname{dist}^2(y^k,\mathcal S)
    \leq
    \frac{L\kappa_G^2}{2}\|G_k\|^2.
\]
Lemma~\ref{lem:pg_descent} and
\(\alpha_k\geq\underline\alpha\) give
\[
    \Phi(y^{k+1})
    \leq
    \Phi(y^k)-c_d\underline\alpha^2\|G_k\|^2
    \leq
    \theta\Phi(y^k).
\]
Induction proves \eqref{eq:linear_objective_rate}.  The residual error bound
\eqref{eq:abstract_residual_error_bound} then gives
\eqref{eq:linear_iterate_rate}.
\end{proof}

The theorem demonstrates linear convergence of the accepted safeguarded iterates and geometric decay of the distance to
$\mathcal S$. Together with
\(\Phi(\bar y^{k+1})\leq\Phi(y^k)\) and
\eqref{eq:abstract_residual_error_bound}, it also gives
\(
    \operatorname{dist}(\bar y^{k+1},\mathcal S)
    \leq
    \kappa_r\sqrt{2\Phi(y^0)}\,\theta^{k/2}.
\)
Under part~(ii) of Proposition~\ref{prop:problem_residual_error_bounds}, one
may take \(\kappa_r=\mu_C^{-1}\). Corollary~
\ref{cor:gave_lcp_residual_error_bounds} gives the corresponding choices
\(\kappa_r=\mu_{\rm G}^{-1}\) for the square GAVE and
\(\kappa_r=\mu_{\rm L}^{-1}\) for the P-matrix LCP.

\subsection{Iteration bounds for active-face identification and finite termination}
\label{subsec:quantitative_active_face_identification}

Linear convergence remains asymptotic and does not imply that an iterate ever
belongs to $\mathcal S$. Finite termination requires two further steps: faces
disjoint from $\mathcal S$ must be excluded, and the refinement must then be
invoked on a remaining face. We quantify these steps using the deterministic
face $F(y)$ and the finite collection $\mathfrak F$ from
Section~\ref{sec:preliminaries}. If \(\mathcal S\neq\varnothing\) and
Assumption~\ref{ass:bounded_initial_level_set} holds, then
\(\mathcal S\) is a compact subset of \(\mathcal L_0\). In this case, define
\[
    \mathfrak F_{\mathcal S}
    :=
    \{F\in\mathfrak F:F\cap\mathcal S\neq\varnothing\}.
\]
If \(\mathfrak F_{\mathcal S}\) is a singleton, denote its unique element
by \(F_{\mathcal S}\), so that
\(\mathfrak F_{\mathcal S}=\{F_{\mathcal S}\}\).
If \(\mathfrak F\setminus\mathfrak F_{\mathcal S}\neq\varnothing\), set
\(
    \gamma_{\mathcal S}
    :=
    \min_{F\in\mathfrak F\setminus\mathfrak F_{\mathcal S}}
    \operatorname{dist}(F,\mathcal S).
\)
Since every face is closed, compactness of \(\mathcal S\) and finiteness of
\(\mathfrak F\) imply \(\gamma_{\mathcal S}>0\). If
\(\mathfrak F_{\mathcal S}=\mathfrak F\),
\(\gamma_{\mathcal S}=+\infty\) by convention.

The following elementary lemma converts the positive separation margin into
a face-identification criterion.

\begin{lemma}
\label{lem:active_face_separation}
Suppose that \(\mathcal S\neq\varnothing\) and
Assumption~\ref{ass:bounded_initial_level_set} holds. Let \(y\in\mathcal C\).
If
\(
    \operatorname{dist}(y,\mathcal S)<\gamma_{\mathcal S},
\)
then \(F(y)\in\mathfrak F_{\mathcal S}\).
\end{lemma}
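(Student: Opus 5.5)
The argument is by contradiction, using only the definitions of $F(y)$, $\mathfrak F_{\mathcal S}$ and $\gamma_{\mathcal S}$.

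First, $y\in F(y)$ always holds. By construction $F(y)=F_{\chi(y)}$. If $\chi_i(y)=1$, then either $u_i>0$, which forces $v_i=0$ by complementarity, or $u_i=v_i=0$; in both cases $v_i=0$. If $\chi_i(y)=-1$, then $v_i>0$, so $u_i=0$. Hence $y$ satisfies every defining constraint of $F_{\chi(y)}$, and since $y\in\mathcal C$ we get $y\in F(y)$.

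Now suppose $F(y)\notin\mathfrak F_{\mathcal S}$. Then $\mathfrak F\setminus\mathfrak F_{\mathcal S}\neq\varnothing$, so $\gamma_{\mathcal S}$ is the finite minimum in its definition rather than $+\infty$, and $F(y)$ is one of the faces over which that minimum is taken. Because $y\in F(y)$,
\[
    \operatorname{dist}(y,\mathcal S)
    \geq
    \operatorname{dist}(F(y),\mathcal S)
    \geq
    \gamma_{\mathcal S}.
\]
This contradicts the hypothesis $\operatorname{dist}(y,\mathcal S)<\gamma_{\mathcal S}$, so $F(y)\in\mathfrak F_{\mathcal S}$.

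If $\mathfrak F_{\mathcal S}=\mathfrak F$, the conclusion is immediate, and the convention $\gamma_{\mathcal S}=+\infty$ is consistent with this case.

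There is no real obstacle here. The only points needing care are the membership $y\in F(y)$ at biactive indices, which is handled by the assignment $\chi_i=1$ (giving the constraint $v_i=0$, satisfied since $v_i=0$), and the observation that $\gamma_{\mathcal S}$ is finite whenever some face misses $\mathcal S$. Positivity of $\gamma_{\mathcal S}$, which is where Assumption~\ref{ass:bounded_initial_level_set} enters, is not needed for this implication.
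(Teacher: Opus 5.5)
Your proof is correct and follows the same contradiction argument as the paper: $y\in F(y)$ together with $F(y)\notin\mathfrak F_{\mathcal S}$ gives $\operatorname{dist}(y,\mathcal S)\geq\operatorname{dist}(F(y),\mathcal S)\geq\gamma_{\mathcal S}$. Your explicit check that $y\in F(y)$ (including at biactive indices) and your remark that positivity of $\gamma_{\mathcal S}$ is not needed for this implication are accurate details that the paper leaves implicit.
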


\begin{proof}
If \(F(y)\notin\mathfrak F_{\mathcal S}\), then
\(y\in F(y)\) and
\[
    \operatorname{dist}(y,\mathcal S)
    \geq
    \operatorname{dist}(F(y),\mathcal S)
    \geq
    \gamma_{\mathcal S},
\]
which is a contradiction.
\end{proof}

The main point is now to combine this fixed geometric margin with the
iteration-dependent distance estimate. This gives one index after which no
face disjoint from $\mathcal S$ can contain an iterate or a projected-gradient
point.

\begin{theorem}
\label{thm:quantitative_active_face_identification}
Suppose that the assumptions of
Theorem~\ref{thm:error_bound_linear_convergence} hold, and set
\(C_0:=\kappa_r\sqrt{2\Phi(y^0)}\). If
\(\gamma_{\mathcal S}<+\infty\), let \(K_{\rm id}\) be the smallest
nonnegative integer satisfying
\(C_0\theta^{K_{\rm id}/2}<\gamma_{\mathcal S}\); otherwise, set
\(K_{\rm id}=0\). Then, for every
\(k\geq K_{\rm id}\),
\begin{equation}
\label{eq:iteration_bound}
    F(y^k),F(\bar y^{k+1})\in\mathfrak F_{\mathcal S}.
\end{equation}
If \(\gamma_{\mathcal S}<+\infty\), then
\begin{equation}
\label{eq:identification_iteration_bound}
    K_{\rm id}
    \leq
    1+
    \left\lceil
    \frac{
        2\log\left(
        \max\left\{1,2C_0/\gamma_{\mathcal S}\right\}
        \right)
    }{
        \log(1/\theta)
    }
    \right\rceil.
\end{equation}
If \(\mathfrak F_{\mathcal S}\) is a singleton, then
\begin{equation}
\label{eq:permanent_face_identification}
    F(y^k)
    =
    F(\bar y^{k+1})
    =
    F_{\mathcal S},
    \qquad
    \forall k\geq K_{\rm id}.
\end{equation}
Moreover, there exists
\(\vartheta_{\mathcal S}\in\{-1,1\}^n\) such that
\begin{equation}
\label{eq:permanent_activity_vector_identification}
    \vartheta(y^k)
    =
    \vartheta(\bar y^{k+1})
    =
    \vartheta_{\mathcal S},
    \qquad
    \forall k\geq K_{\rm id}.
\end{equation}
\end{theorem}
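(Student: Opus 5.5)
The plan is to feed the geometric decay of Theorem~\ref{thm:error_bound_linear_convergence} into the separation criterion of Lemma~\ref{lem:active_face_separation}. Two distance estimates are needed. For the accepted iterates, \eqref{eq:linear_iterate_rate} gives $\operatorname{dist}(y^k,\mathcal S)\leq C_0\theta^{k/2}$. For the projected-gradient points, I would use \eqref{eq:basic_descent} to get $\Phi(\bar y^{k+1})\leq\Phi(y^k)\leq\theta^k\Phi(y^0)$. Since $\bar y^{k+1}\in\mathcal C$ and $\Phi(\bar y^{k+1})\leq\Phi(y^0)$, we have $\bar y^{k+1}\in\mathcal L_0$. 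The residual error bound \eqref{eq:abstract_residual_error_bound} then yields $\operatorname{dist}(\bar y^{k+1},\mathcal S)\leq\kappa_r\sqrt{2\Phi(\bar y^{k+1})}\leq C_0\theta^{k/2}$, as already remarked after Theorem~\ref{thm:error_bound_linear_convergence}.

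\emph{Identification \eqref{eq:iteration_bound}.} If $\gamma_{\mathcal S}=+\infty$, then $\mathfrak F_{\mathcal S}=\mathfrak F$ and there is nothing to prove. Otherwise, for $k\geq K_{\rm id}$ we have $C_0\theta^{k/2}\leq C_0\theta^{K_{\rm id}/2}<\gamma_{\mathcal S}$. Lemma~\ref{lem:active_face_separation}, applied to $y^k$ and to $\bar y^{k+1}$, then gives \eqref{eq:iteration_bound}.

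\emph{Bound \eqref{eq:identification_iteration_bound}.} If $C_0<\gamma_{\mathcal S}$, then $K_{\rm id}=0$ and the bound is trivial. Otherwise set $\bar K:=\lceil 2\log(2C_0/\gamma_{\mathcal S})/\log(1/\theta)\rceil$. Then $\theta^{\bar K/2}\leq\gamma_{\mathcal S}/(2C_0)$, so $C_0\theta^{\bar K/2}\leq\gamma_{\mathcal S}/2<\gamma_{\mathcal S}$. Minimality of $K_{\rm id}$ gives $K_{\rm id}\leq\bar K$, which is within the stated bound with one unit of slack. The $\max\{1,\cdot\}$ covers the case where the logarithm would be negative.

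\emph{Singleton case.} When $\mathfrak F_{\mathcal S}=\{F_{\mathcal S}\}$, \eqref{eq:permanent_face_identification} follows directly from \eqref{eq:iteration_bound}. The step that needs actual care is \eqref{eq:permanent_activity_vector_identification}, because it asserts that $\vartheta\in\{-1,1\}^n$, i.e.\ that there are no biactive indices for $k\geq K_{\rm id}$. The statement as written does not repeat the singleton hypothesis; I read it as holding under that hypothesis, since otherwise the claim fails in general.

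The key observation is that a point $y\in\mathcal C$ lies in every face $F_s$ with $s_i=1$ on $\mathcal I_{+0}(y)$, $s_i=-1$ on $\mathcal I_{0+}(y)$, and $s_i$ arbitrary on $\mathcal I_{00}(y)$. Suppose $y\in\{y^k,\bar y^{k+1}\}$ with $k\geq K_{\rm id}$ had a biactive index $i$. Take $s'$ equal to $\chi(y)$ except $s'_i=-1$. Then $y\in F_{s'}$, so $\operatorname{dist}(F_{s'},\mathcal S)\leq\operatorname{dist}(y,\mathcal S)<\gamma_{\mathcal S}$, and hence $F_{s'}\in\mathfrak F_{\mathcal S}$. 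But $F_{s'}\neq F_{\chi(y)}$, so $\mathfrak F_{\mathcal S}$ would contain two distinct faces, contradicting the singleton assumption.

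Thus $y$ is strictly complementary, and $\vartheta(y)=\chi(y)$. Let $\vartheta_{\mathcal S}$ be the sign vector with $F_{\mathcal S}=F_{\vartheta_{\mathcal S}}$; distinct sign vectors give distinct faces. Then $F(y)=F_{\mathcal S}$ forces $\chi(y)=\vartheta_{\mathcal S}$, which proves \eqref{eq:permanent_activity_vector_identification}.
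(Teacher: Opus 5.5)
Your proposal is correct and follows essentially the same route as the paper. You bound both $\operatorname{dist}(y^k,\mathcal S)$ and $\operatorname{dist}(\bar y^{k+1},\mathcal S)$ by $C_0\theta^{k/2}$ via $\Phi(\bar y^{k+1})\le\Phi(y^k)$ and the residual error bound, invoke Lemma~\ref{lem:active_face_separation}, and rule out biactive indices in the singleton case by flipping the sign at a biactive coordinate; your ``two distinct faces in $\mathfrak F_{\mathcal S}$'' contradiction is just the contrapositive of the paper's ``$F'\notin\mathfrak F_{\mathcal S}$, hence $\operatorname{dist}\ge\gamma_{\mathcal S}$,'' and, like the paper, you correctly read \eqref{eq:permanent_activity_vector_identification} as holding under the singleton hypothesis. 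You also go slightly beyond the paper's proof by actually verifying \eqref{eq:identification_iteration_bound}, which the paper states but does not derive, and your computation shows that the additive $1$ there is slack.
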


\begin{proof}
If \(\gamma_{\mathcal S}=+\infty\), then
\(\mathfrak F=\mathfrak F_{\mathcal S}\), and the result follows from
\(K_{\rm id}=0\).

Suppose that \(\gamma_{\mathcal S}<+\infty\). Since
$\Phi(\bar y^{k+1})\leq\Phi(y^k)$,
\eqref{eq:linear_objective_rate} and
\eqref{eq:abstract_residual_error_bound} imply
\[
    \max\left\{
    \operatorname{dist}(y^k,\mathcal S),
    \operatorname{dist}(\bar y^{k+1},\mathcal S)
    \right\}
    \leq C_0\theta^{k/2}.
\]
This inequality and the definition of
\(K_{\rm id}\) yield, for every \(k\geq K_{\rm id}\), 
\begin{equation}
\label{eq:post_identification_distance_envelope}
\max\left\{
    \operatorname{dist}(y^k,\mathcal S),
    \operatorname{dist}(\bar y^{k+1},\mathcal S)
\right\}
\leq
C_0\theta^{k/2}
\leq
C_0\theta^{K_{\rm id}/2}
<
\gamma_{\mathcal S}.
\end{equation}
Let \(v\in\{y^k,\bar y^{k+1}\}\).  By Lemma \ref{lem:active_face_separation}, 
\eqref{eq:iteration_bound} holds.
If \(\mathfrak F_{\mathcal S}\) is a singleton, the inclusion
reduces to \eqref{eq:permanent_face_identification}.
In this situation,
let \(v\in\{y^k,\bar y^{k+1}\}\) for some \(k\geq K_{\rm id}\).  Suppose
\(v\) is biactive at an index \(i\), write
\(F(v)=F_s=F_{\mathcal S}\) with an indicating vector $s$. Define another indicator \(s'\in\{-1,1\}^n\) by
\(s'_i=-s_i\) and \(s'_j=s_j\) for \(j\neq i\).  Then
\(F':=F_{s'}\) is distinct from \(F_{\mathcal S}\), while
\(v\in F'\) because \(u_i=v_i=0\). Since
\(\mathfrak F_{\mathcal S}=\{F_{\mathcal S}\}\), it follows that
\(F'\notin\mathfrak F_{\mathcal S}\), and hence
\[
    \operatorname{dist}(v,\mathcal S)
    \geq
    \operatorname{dist}(F',\mathcal S)
    \geq
    \gamma_{\mathcal S},
\]
contradicting \eqref{eq:post_identification_distance_envelope}.  Hence both
\(y^k\) and \(\bar y^{k+1}\) are strictly complementary.  On strictly
complementary points, \(\vartheta(v)=\chi(v)\), and
\(F(v)=F_{\mathcal S}\) uniquely determines this common vector. This proves
\eqref{eq:permanent_activity_vector_identification}.
\end{proof}

\begin{remark}
\label{rem:identification_margin}
The index \(K_{\rm id}\) bounds the number of projected-gradient iterations
required to exclude every face disjoint from \(\mathcal S\). This bound
deteriorates as \(\gamma_{\mathcal S}\downarrow0\), consistently with the
behavior near biactive points. If a solution is biactive at some coordinate,
it belongs to both incident faces, and hence
\(\mathfrak F_{\mathcal S}\) cannot be a singleton. Nevertheless,
\eqref{eq:iteration_bound} remains valid because it only excludes faces
disjoint from \(\mathcal S\). More precisely,
\(\mathfrak F_{\mathcal S}=\{F_{\mathcal S}\}\) holds if and only if
every solution is strictly complementary and all solutions induce the same
active face \(F_{\mathcal S}\).
\end{remark}

Once all faces disjoint from $\mathcal S$ have been excluded, either trigger
in \eqref{eq:refinement_trigger} can invoke the refinement on a face
intersecting $\mathcal S$. 
The following corollary considers this to show the finite termination.
 
\begin{corollary}
\label{cor:finite_termination_complexity}
Suppose that the assumptions of
Theorem~\ref{thm:quantitative_active_face_identification} hold. Assume that an
invoked refinement returns a solution whenever the selected face intersects
\(\mathcal S\), in the sense that
\begin{equation}
\label{eq:selected_face_refinement_solution}
    \mathcal T_k
    \ \text{and}\
    F(\bar y^{k+1})\in\mathfrak F_{\mathcal S}
    \quad\Longrightarrow\quad
    \widehat y^{k+1}\ \text{is well-defined and}\
    \widehat y^{k+1}\in\mathcal S,
    \qquad k\geq0.
\end{equation}
\begin{enumerate}
    \item[(i)] Suppose that the residual trigger is used with \(\tau>0\).
    Using the normalization \(s_c\) from
    \eqref{eq:relative_residual_scaling}, let \(K_\tau\) be the smallest
    nonnegative integer satisfying
    \begin{equation}
    \label{eq:residual_trigger_index}
        \frac{\sqrt{2\Phi(y^0)}}{s_c}
        \theta^{K_\tau/2}
        \leq\tau.
    \end{equation}
    Then Algorithm \ref{alg:main} reaches \(\mathcal S\) no later than
    \begin{equation}
    \label{eq:residual_finite_termination_bound}
        K^{(\rho)}
        :=
        \max\{K_{\rm id},K_\tau\}+1,
    \end{equation}
    where \(K_{\rm id}\) is defined in
    Theorem~\ref{thm:quantitative_active_face_identification}, and
    \[
        K_\tau
        \leq
        \left\lceil
        \frac{
            2\log\left(
            \max\left\{
            1,\sqrt{2\Phi(y^0)}/(s_c\tau)
            \right\}
            \right)
        }{
            \log(1/\theta)
        }
        \right\rceil.
    \]

    \item[(ii)] If
    \(\mathfrak F_{\mathcal S}\) is a singleton, then Algorithm \ref{alg:main} reaches
    \(\mathcal S\) no later than
    \begin{equation}
    \label{eq:stability_finite_termination_bound}
        K^{(\rm stab)}
        :=
        K_{\rm id}+q.
    \end{equation}
\end{enumerate}
Here \(q\geq1\) is the activity-vector stability threshold appearing in
\eqref{eq:refinement_trigger}.
\end{corollary}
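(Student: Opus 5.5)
The argument is the same for both parts. We find an iteration index $k$ at which the trigger $\mathcal T_k$ holds and $F(\bar y^{k+1})\in\mathfrak F_{\mathcal S}$. Then \eqref{eq:selected_face_refinement_solution} gives $\widehat y^{k+1}\in\mathcal S$. Since $\widehat y^{k+1}\in\mathcal S\subseteq\mathcal C$ and $\Phi(\widehat y^{k+1})=0\leq\Phi(\bar y^{k+1})$, the safeguard \eqref{eq:refinement_safeguard} accepts it, so $y^{k+1}=\widehat y^{k+1}\in\mathcal S$. "Reaches $\mathcal S$ no later than $K$" will mean $y^{K}\in\mathcal S$ for some such $k\leq K-1$. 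Face membership is supplied by Theorem~\ref{thm:quantitative_active_face_identification}: for every $k\geq K_{\rm id}$ we have $F(\bar y^{k+1})\in\mathfrak F_{\mathcal S}$. So it remains to show the trigger fires at some $k$ with $K_{\rm id}\le k\le K-1$.

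For part (i), take $k:=\max\{K_{\rm id},K_\tau\}$. Since $\Phi(\bar y^{k+1})\leq\Phi(y^k)$ (established in the proof of Lemma~\ref{lem:pg_descent}) and \eqref{eq:linear_objective_rate} holds,
\[
\rho(\bar y^{k+1})=\frac{\sqrt{2\Phi(\bar y^{k+1})}}{s_c}\leq\frac{\sqrt{2\Phi(y^0)}}{s_c}\,\theta^{k/2}\leq\frac{\sqrt{2\Phi(y^0)}}{s_c}\,\theta^{K_\tau/2}\leq\tau,
\]
using $\theta<1$ and $k\ge K_\tau$. Hence $\mathcal T_k$ holds, and $y^{k+1}\in\mathcal S$ with $k+1=K^{(\rho)}$. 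If the algorithm already reached $\mathcal S$ earlier, the claim "no later than" is immediate.

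The bound on $K_\tau$ follows directly. If $\sqrt{2\Phi(y^0)}\le s_c\tau$, then $K_\tau=0$. Otherwise, any integer $K\geq 2\log(\sqrt{2\Phi(y^0)}/(s_c\tau))/\log(1/\theta)$ satisfies \eqref{eq:residual_trigger_index}, so the ceiling bounds the smallest one.

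For part (ii), use \eqref{eq:permanent_activity_vector_identification}: for all $k\geq K_{\rm id}$, $\vartheta(y^k)=\vartheta(\bar y^{k+1})=\vartheta_{\mathcal S}$. Thus $\bar\vartheta_{k+1}=\vartheta_k$ for every $k\ge K_{\rm id}$, and by \eqref{eq:activity_vector_stability_counter} the counter increments at each such step. Hence $\nu_{k+1}\geq k-K_{\rm id}+1$, and $\nu_{k+1}\ge q$ holds at $k=K_{\rm id}+q-1$. At that $k$ the trigger holds and $F(\bar y^{k+1})=F_{\mathcal S}\in\mathfrak F_{\mathcal S}$, so $y^{K_{\rm id}+q}\in\mathcal S$.

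The main obstacle is bookkeeping rather than analysis. First, the counter compares $\bar\vartheta_{k+1}$ with $\vartheta_k$ at the accepted iterate $y^k$. Both are pinned to $\vartheta_{\mathcal S}$ only for $k\geq K_{\rm id}$, so the count must start there and not earlier; this is why the bound is exactly $K_{\rm id}+q$. Second, we must check that once an iterate lies in $\mathcal S$ it stays there, so that "reaches" is unambiguous. This holds because $\nabla\Phi=0$ on $\mathcal S$, which makes $\bar y^{k+1}=y^k$ under the projection rule, and any accepted refinement has $\Phi=0$.
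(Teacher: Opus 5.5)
Your proposal is correct and follows the paper's proof essentially step for step: in part (i) you take $k=\max\{K_{\rm id},K_\tau\}$, use $\Phi(\bar y^{k+1})\le\Phi(y^k)$ together with the linear rate to fire the residual trigger, and then combine face identification, the refinement hypothesis and the safeguard; in part (ii) you use permanent activity-vector identification so that the counter reaches $q$ at $k=K_{\rm id}+q-1$. Your explicit derivation of the bound on $K_\tau$ and your check that $\mathcal S$ is invariant under the iteration are correct additions that the paper leaves implicit.
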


\begin{proof}
Set \(k_\rho:=\max\{K_{\rm id},K_\tau\}\). By the definition of $\mathcal T_k$ in \eqref{eq:refinement_trigger}, along with
Lemma \ref{lem:pg_descent}, \eqref{eq:linear_objective_rate}, and
\eqref{eq:residual_trigger_index}, we have
\[
    \rho(\bar y^{k_\rho+1})
    =
    \frac{\sqrt{2\Phi(\bar y^{k_\rho+1})}}{s_c}
    \leq
    \frac{\sqrt{2\Phi(y^{k_\rho})}}{s_c}
    \leq
    \frac{\sqrt{2\Phi(y^0)}}{s_c}\theta^{k_\rho/2}
    \leq
    \frac{\sqrt{2\Phi(y^0)}}{s_c}\theta^{K_\tau/2}
    \leq\tau.
\]
Applying Theorem~\ref{thm:quantitative_active_face_identification} yields
\(F(\bar y^{k_\rho+1})\in\mathfrak F_{\mathcal S}\).
Since \(\mathcal T_{k_\rho}\) holds,
\eqref{eq:selected_face_refinement_solution} gives
\(\widehat y^{k_\rho+1}\in\mathcal S\).  Hence
\(\Phi(\widehat y^{k_\rho+1})=0
\leq\Phi(\bar y^{k_\rho+1})\), and 
\eqref{eq:refinement_safeguard} gives
\(y^{k_\rho+1}=\widehat y^{k_\rho+1}\in\mathcal S\).
Thus Algorithm \ref{alg:main} reaches \(\mathcal S\) by iteration
\(k_\rho+1=K^{(\rho)}\).

For part~(ii), let
\(\mathfrak F_{\mathcal S}=\{F_{\mathcal S}\}\).
Theorem~\ref{thm:quantitative_active_face_identification} shows that
\(y^k\) and \(\bar y^{k+1}\) are strictly complementary for every
\(k\geq K_{\rm id}\). Consequently,
\(\vartheta(v)=\chi(v)\) for
\(v\in\{y^k,\bar y^{k+1}\}\). For every
\(j=0,\ldots,q-1\),
\eqref{eq:permanent_activity_vector_identification} gives
\begin{equation}
\label{eq:post_identification_activity_match}
    \bar\vartheta_{K_{\rm id}+j+1}
    =\vartheta_{K_{\rm id}+j},
    \qquad j=0,\ldots,q-1.
\end{equation}
This means that the consecutive iterations are in the same face. By the definition of $\mathcal T_k$,
\(\nu_{K_{\rm id}+j+1}
=\nu_{K_{\rm id}+j}+1\) for \(j=0,\ldots,q-1\).
Since \(\nu_{K_{\rm id}}\geq0\), we have
\(\nu_{K_{\rm id}+q}\geq q\).
Thus \(\mathcal T_{K_{\rm id}+q-1}\) holds, and the refinement generated
at iteration \(k=K_{\rm id}+q-1\) is invoked. Moreover,
\eqref{eq:iteration_bound} gives
\(F(\bar y^{K_{\rm id}+q})\in\mathfrak F_{\mathcal S}\).  Therefore,
\(\widehat y^{K_{\rm id}+q}\in\mathcal S\) as we assume the refinement returns a solution.  Hence
\(\widehat y^{K_{\rm id}+q}\in\mathcal C\) and
\(\Phi(\widehat y^{K_{\rm id}+q})=0
\leq\Phi(\bar y^{K_{\rm id}+q})\).  The safeguard
\eqref{eq:refinement_safeguard} therefore gives
\(y^{K_{\rm id}+q}=\widehat y^{K_{\rm id}+q}\in\mathcal S\).
This proves
\(K^{(\rm stab)}=K_{\rm id}+q\).
\end{proof}

\begin{remark}
\label{rem:finite_termination_context}
\begin{itemize}
    \item[(i)]
    To verify condition \eqref{eq:selected_face_refinement_solution} in
    Corollary~\ref{cor:finite_termination_complexity}, consider any
    \(F_s\in\mathfrak F_{\mathcal S}\).  Since
    \(F_s\cap\mathcal S\neq\varnothing\), the system
    \eqref{eq:selected_face_refinement_system} admits a solution
    \(\widetilde\xi\) satisfying \(D_s\widetilde\xi\geq0\).  If \(m>n\), full
    column rank of \(C_{\Theta_s}\) makes this solution unique; for a GAVE, this
    follows from \(\mu_C>0\).  If \(m=n\), the corresponding sufficient condition
    is nonsingularity, which follows from \(\mu_{\rm G}>0\) for a GAVE and from
    \(\mu_{\rm L}>0\) for a P-matrix LCP.  If \(m<n\), the
    refinement must explicitly select a solution satisfying
    \(D_s\widetilde\xi\geq0\).  In all cases, the selected solution is converted
    into the corresponding point of \(F_s\) as in
    \eqref{eq:selected_face_refinement_point}.  
    For part~(ii) of Corollary~\ref{cor:finite_termination_complexity}, 
    the condition that \(\mathfrak F_{\mathcal S}\) 
    being a singleton is equivalent to every solution
    being strictly complementary and all solutions inducing the same face.
    Part~(i), in contrast, does not require strict complementarity.

    \item[(ii)]
    The closest related result is the component-identification principle of
    \cite{alcantara2025global}, but it does not provide an explicit
    identification index. Our result provides an explicit
    face-identification bound and, when combined with exact selected-face
    refinement, explicit finite-termination bounds.
    Newton-type finite-termination results rely on different, algorithm-specific
    conditions.  
    Fischer and Kanzow obtain local one-step termination from
    nonsingularity of the relevant generalized Jacobians under conditions such
    as P-matrix, R-regularity, or b-regularity, while allowing degenerate
    indices~\cite{fischer1996finite}. Their one-step result is local, and does not
    provide the number of iterations required to enter that neighborhood.
    Sun and Huang establish finite termination
    at a maximally complementary solution for sufficient-matrix LCPs
    \cite{sun2006smoothing}, but provide no explicit bound on the 
    termination index.
    The conjugate-gradient method of \cite{li2008conjugate} has the explicit bound
    \(
        \lvert J(x^*)\rvert\leq n
    \)
    on the number of conjugate-gradient steps, but it requires
    the symmetric M-matrix structure, which is a subset of our P-matrix.
\end{itemize}
\end{remark}

\section{Numerical Experiments}\label{sec:numerics}

In this section, we evaluate the proposed method, denoted by PGDN, on GAVEs and LCPs. Since
the two model classes use different variables and residuals, comparisons are
made within each class.

\subsection{Implementation details}

The global convergence framework uses the sufficient condition
$\alpha_k<1/L$, and the linear-rate results additionally assume that the
stepsizes belong to a finite set. Our projected-gradient step uses alternating Barzilai--Borwein trial
stepsizes \cite{barzilai1988two}, globalized by a projected Armijo line
search. The trial value is truncated 
to $[\alpha_{\min},\alpha_{\max}]$ and then reduced by the factor $\beta$
until the projected-gradient trial point satisfies the Armijo condition
\[
    \Phi(\bar y^{k+1})
    \leq
    \Phi(y^k)
    -\frac{\sigma}{\alpha_k}
    \|\bar y^{k+1}-y^k\|^2.
\]
Thus, the implementation does not compute the Lipschitz constant. However, as the objective function $\Phi(y)$ is $L-$smooth, the BB stepsize and the following stepsize produced by line search automatically share the theoretical upper bound $1/L$. Of course, other choices of stepsize are also possible. The default values are
$\alpha_0=1$, $\alpha_{\min}=10^{-12}$,
$\alpha_{\max}=10^{12}$, $\beta=0.5$, and $\sigma=10^{-4}$, unless stated
otherwise.

The refinement is triggered when either the normalized equation residual
is at most \texttt{newtonTol} or the vector $\vartheta(y)$ has been
unchanged for \texttt{signStableIts} consecutive iterations. The defaults
are \texttt{newtonTol}$=10^{-3}$, \texttt{signStableIts}$=5$, and at most
\texttt{maxNewton}$=3$ inner refinements. Damped candidates are tested in
the order $\gamma\in\{1,0.5,0.25,0.1\}$ and retained only if the normalized
residual is at most \texttt{newtonEta}$=0.8$ times the best current
residual. The final refined point is used only when it improves the
projected-gradient trial point, which realizes the safeguard
\eqref{eq:refinement_safeguard}.

Both implementations specialize \eqref{eq:selected_face_refinement_system}. For a
GAVE iterate $x$, define
\[
    d_i(x):=
    \begin{cases}
        1,&x_i\geq0,\\
       -1,&x_i<0,
    \end{cases}
    \qquad
    D(x):=\operatorname{diag}(d(x)).
\]
The diagonal selector gives the system
\(\bigl(A-BD(x)\bigr)\widetilde x=b\).
A direct linear solver is used for square systems and a minimum-norm
least-squares solution for rectangular systems. Each damped point has the
form $x+\gamma(\widetilde x-x)$ and is tested using the true GAVE residual.
After acceptance, the complementarity variables are recovered from its
positive and negative parts.

For the direct LCP solver, let
$\mathcal I_w=\{i:w_i>0\}$ and
$\mathcal I_z=\{1,\ldots,n\}\setminus\mathcal I_w$. Hence, if
$z_i=w_i=0$, then $i\in\mathcal I_z$. Accordingly, the refinement fixes
$z_{\mathcal I_w}=0$ and $w_{\mathcal I_z}=0$ and solves
\(
    \begin{bmatrix}-M_{:,\mathcal I_z}&I_{:,\mathcal I_w}\end{bmatrix}
    \begin{bmatrix}
        \widetilde z_{\mathcal I_z}\\
        \widetilde w_{\mathcal I_w}
    \end{bmatrix}
    =-b.
\)
The resulting system is square. A candidate is discarded if an active
component is above the prescribed feasibility tolerance; the remaining
damped candidates are tested using
$\|w-Mz+b\|/\max\{1,\|b\|\}$.

Both solvers stop when the normalized equation residual reaches
\texttt{tolRes}. They also stop when the infinity norm of the
projected-gradient displacement is at most \texttt{tolStep} and no
refinement was accepted; in this case the termination flag records whether
the residual tolerance was met. Consequently, the practical solver may stop
at a prescribed tolerance before the selected-face linear system satisfies
the exact-solution condition
\eqref{eq:selected_face_refinement_solution}.

\subsection{Experimental settings}

All methods were implemented in MATLAB R2024b and run on a 64-bit workstation
with an Intel Core i9-12900 CPU at 2.40 GHz and 64 GB of RAM. CPU times were
measured in this common environment. Because the methods have different
per-iteration costs, iteration counts are compared only within the same
model class.

For GAVE instances, we report the relative equation residual and, when a
reference solution is available, the relative solution error:
\[
 \mathrm{relRes}:=\frac{\|Ax-B|x|-b\|_2}{\max\{1,\|b\|_2\}}, \qquad
 \mathrm{distX}:=\frac{\|x-x^\ast\|_2}{\max\{1,\|x^\ast\|_2\}}.
\]
For direct LCP instances, we report CPU time, the natural residual, and the
feasibility-and-complementarity residual defined below. PGDN is shown in red
in all figures.

\subsection{Ablation study}

We isolate the effects of the BB stepsize and algebraic refinement by
comparing four variants:
\begin{itemize}
    \item \textbf{PGD.} Projected gradient descent with a fixed stepsize and without algebraic refinement.
    \item \textbf{PGD-BB.} Projected gradient descent equipped with the Barzilai--Borwein (BB) stepsize.
    \item \textbf{PGD-Newton.} Projected gradient descent with the safeguarded algebraic refinement and a fixed stepsize.
    \item \textbf{PGDN.} The full proposed method.
\end{itemize}

All variants use the same projected line search and differ only in the two
components indicated above. For the fixed-step variants, we use
\(
    \alpha_{\rm fix}
    :=\frac{0.99}{2(\sigma_{\max}(A)+\|B\|_2)^2}.
\)
We set the iteration limit to $4,000$ and use
\(
\texttt{tolRes}=10^{-12}\;{\rm and}\;\texttt{tolStep}=10^{-10}.
\) 

We generate synthetic GAVEs $Ax-B|x|=b$. The matrices $A$ and $B$ have
linearly spaced prescribed singular values; their singular-vector factors
are obtained by QR factorizations of Gaussian matrices. The baseline
parameters are
\begin{equation}\label{eq:sem_param}
\sigma_{\min}(A)=2,\qquad \kappa(A)=2,\qquad \|B\|_2=1,\qquad \kappa(B)=10.
\end{equation}
For each instance, we sample $x^\ast$ and set
\(
b = Ax^\ast - B|x^\ast|,
\) 
so that the instance is consistent. All variants use the same random initial
point in each trial. We test
$m\in\{512,1024,2048,4096\}$, and $n\in\{64,128,256,512\}$,
with $n\leq m$. For each pair $(m,n)$, ten independent trials are performed.

\begin{figure}[htbp]
\centering
\includegraphics[width= \textwidth]{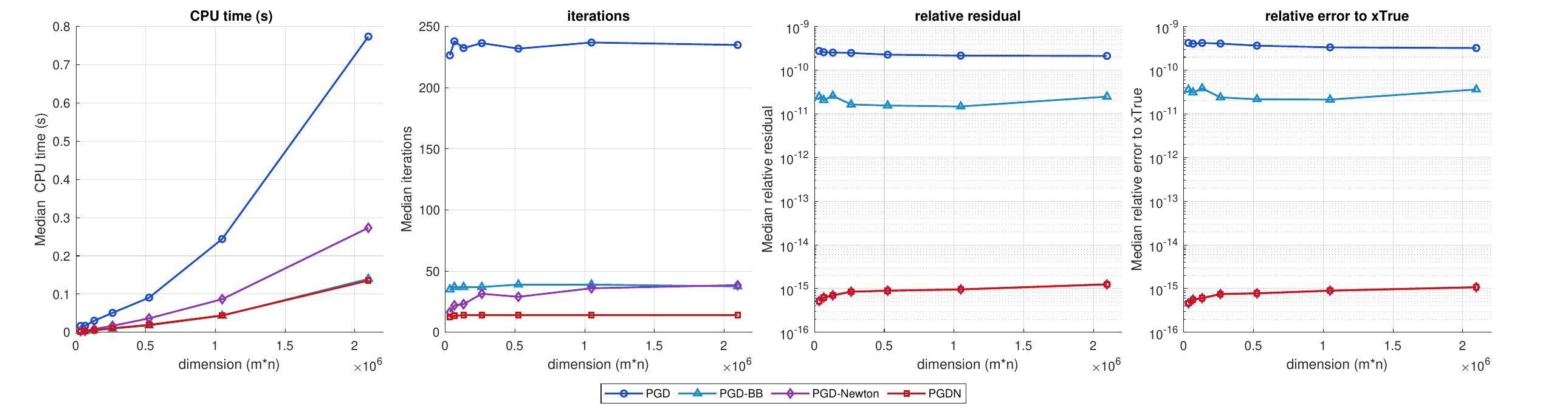}
\vspace{-.5cm}
\caption{Ablation results for the fixed-stepsize, BB, and safeguarded
refinement components. Each curve reports the median over independent trials
for the tested matrix dimensions.}
\label{fig:ablation}
\end{figure}

Figure~\ref{fig:ablation} shows the expected tradeoff. The theoretically safe
fixed stepsize makes PGD stable but substantially slower than its BB variant.
The refinement reduces both fixed-step and BB iterations and drives the final
residual and solution error close to machine precision. PGDN uses a modest
amount of additional linear algebra relative to PGD-BB, but it requires the
fewest outer iterations overall and is faster than both fixed-step variants.
Thus the BB rule accelerates the projected-gradient iterations, while the algebraic refinement
provides high final accuracy.

\subsection{Sensitivity analysis}

We next vary the condition numbers, the relative strength of the absolute-value
term, and the dimension. We set $\texttt{tolRes}=10^{-12}$,
$\texttt{tolStep}=10^{-10}$, and $x^0=0$, and report medians over five
independent trials.

For square instances with $n\in\{128,256,512\}$, we vary one parameter at a
time from the baseline \eqref{eq:sem_param} $\kappa(A),\kappa(B)\in\{2,5,10,20,50,100\}$ 
and $\|B\|_2/\sigma_{\min}(A)\in\{0.1,0.25,0.5,0.75,1.0\}.$
Figures~\ref{fig:sens_cpu_factor} and \ref{fig:sens_res_factor} report median
CPU time and relative residual, respectively.

\begin{figure}[htbp]
\centering
\includegraphics[width= .85\textwidth]{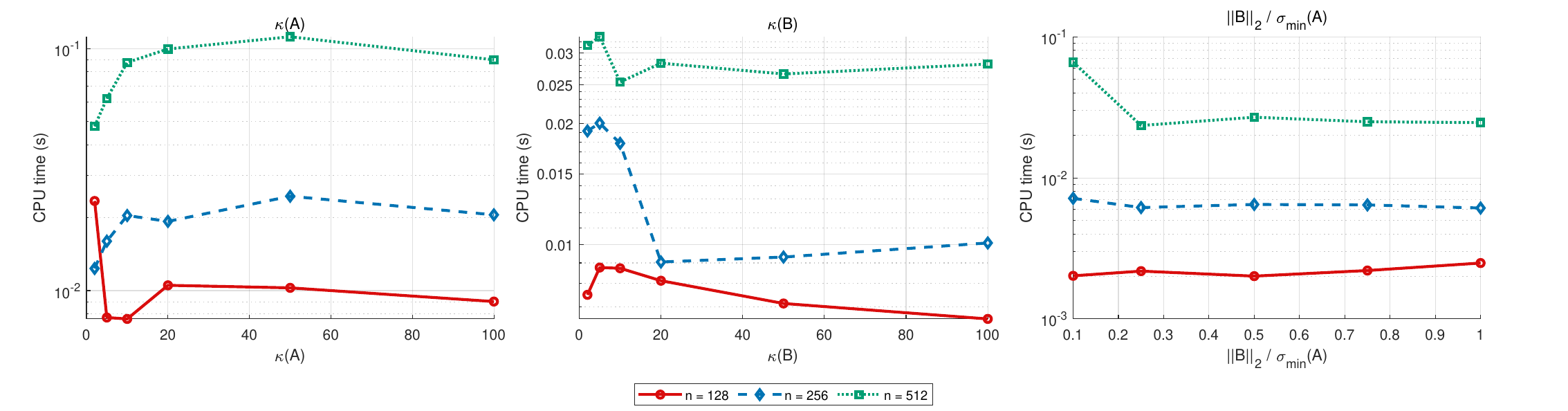}
\caption{Median CPU time of PGDN under variations in the condition numbers,
the ratio $\|B\|_2/\sigma_{\min}(A)$, and the problem dimension.}
\label{fig:sens_cpu_factor}
\end{figure}

\begin{figure}[htbp]
\centering
\includegraphics[width= .85\textwidth]{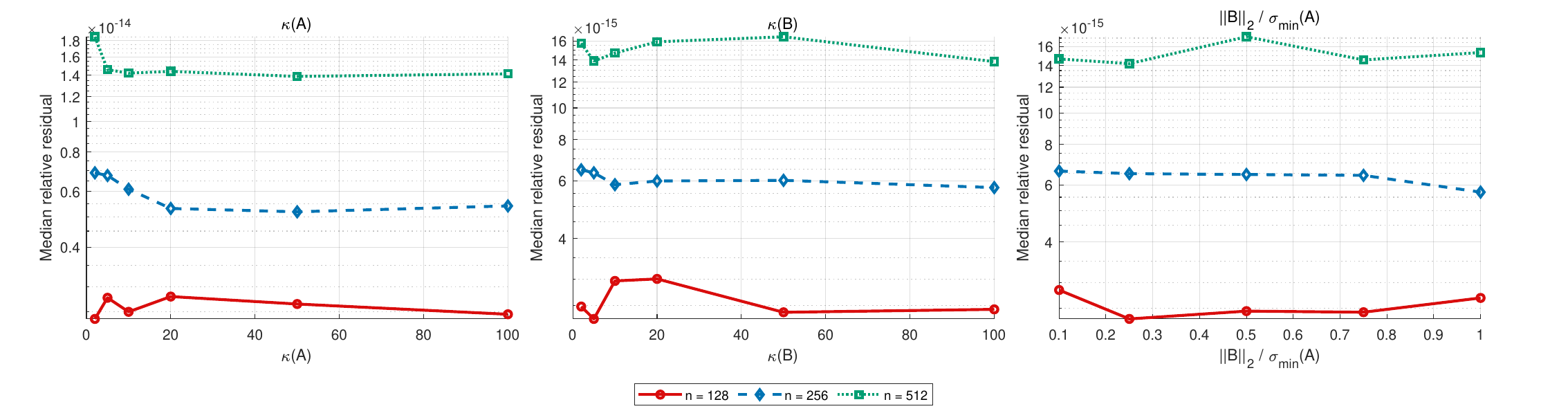}
\caption{Median relative residual of PGDN under variations in the condition
numbers, the ratio $\|B\|_2/\sigma_{\min}(A)$, and the problem dimension.}
\label{fig:sens_res_factor}
\end{figure}

Across the tested ranges, Figure~\ref{fig:sens_cpu_factor} shows moderate
variation in median CPU time, while Figure~\ref{fig:sens_res_factor} shows
that the final residual remains at a comparable level. The tested instances
therefore indicate limited sensitivity to these three factors.
 
\subsection{Comparisons with existing GAVE methods}

We compare PGDN with RABK~\cite{xie2025randomized},
PIM~\cite{rohn2014iterative}, MAP~\cite{alcantara2023method},
GNM~\cite{mangasarian2009generalized,hu2011generalized}, and
TAVE~\cite{abdallah2018solving}. Unless otherwise stated, all methods start
from $x^0=0$, use a maximum of 4000 iterations, and target a relative residual
of $10^{-12}$.

\paragraph{Square controlled-spectrum systems.}
Following \cite[Section~6.2]{xie2025randomized}, we set
$\sigma_{\min}(A)=2$ and $\|B\|_2=1$ and use
\[
 (\kappa(A),\kappa(B))
 \in\{(2,1),(3,10),(10,10),(30,100)\}.
\]
We test $m=n\in\{64,128,256,512,1024\}$, with three trials per size and
spectral pair.

\begin{figure}[p]
\centering
\begin{subfigure}{.49\textwidth}
\includegraphics[width=\textwidth]{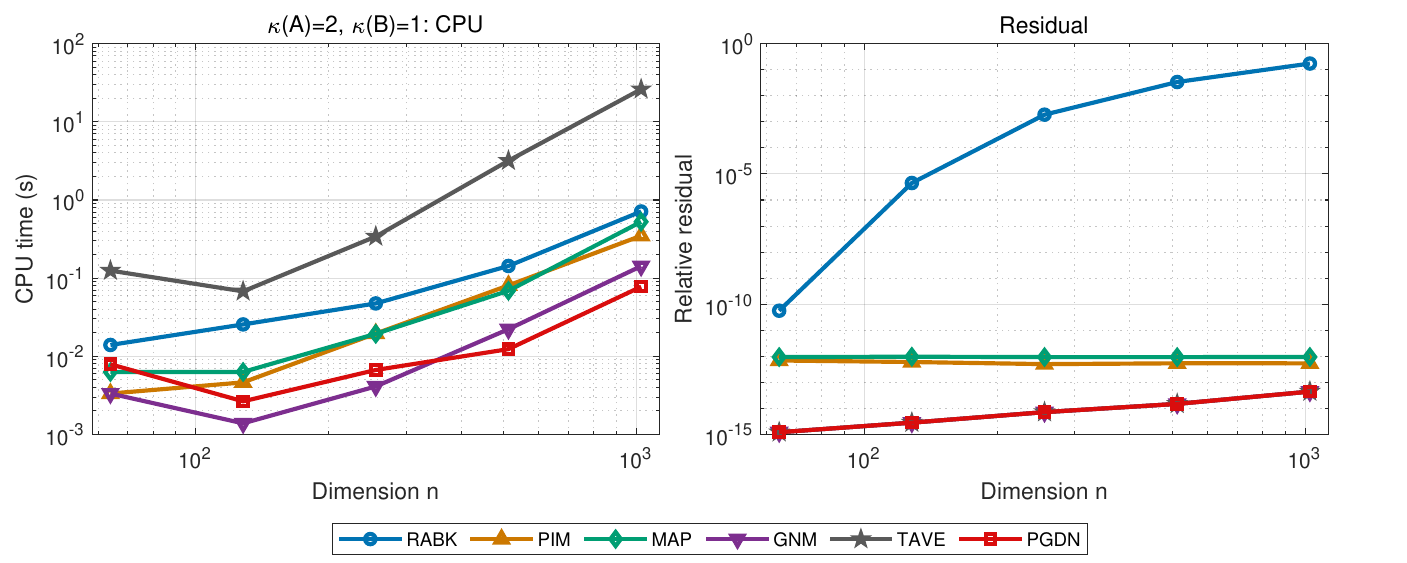}
\caption{$\kappa(A)=2$, $\kappa(B)=1$.}
\end{subfigure}
\begin{subfigure}{.49\textwidth}
\includegraphics[width=\textwidth]{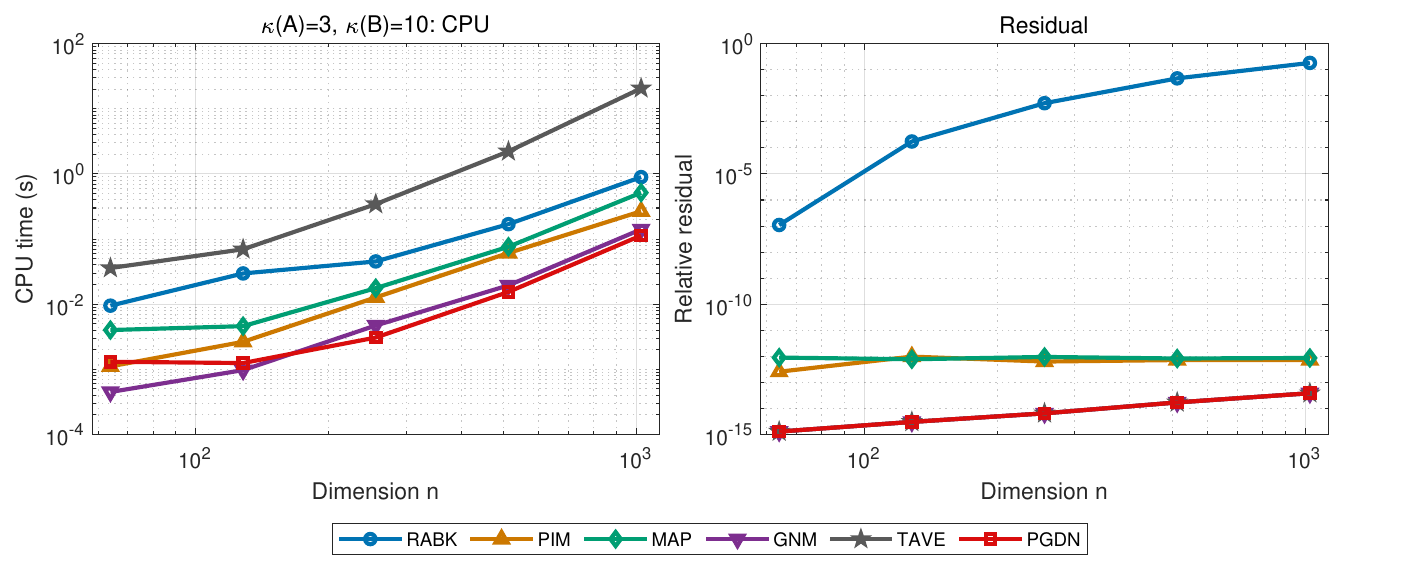}
\caption{$\kappa(A)=3$, $\kappa(B)=10$.}
\end{subfigure}
\begin{subfigure}{.49\textwidth}
\includegraphics[width=\textwidth]{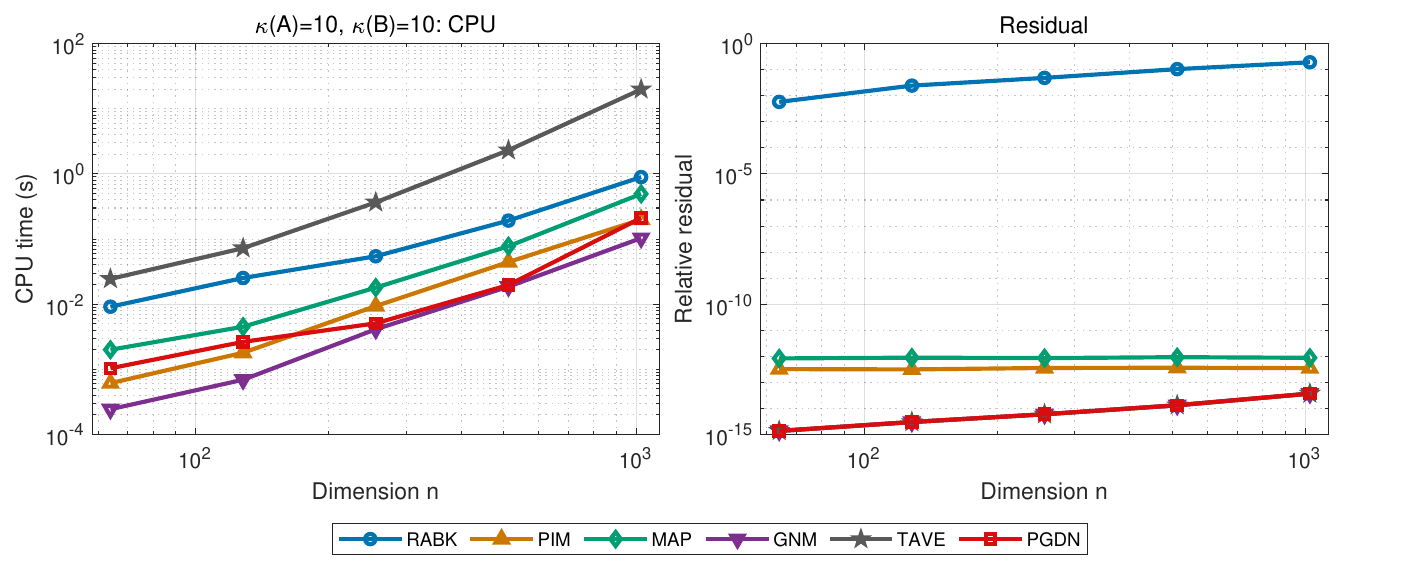}
\caption{$\kappa(A)=10$, $\kappa(B)=10$.}
\end{subfigure}
\begin{subfigure}{.49\textwidth}
\includegraphics[width=\textwidth]{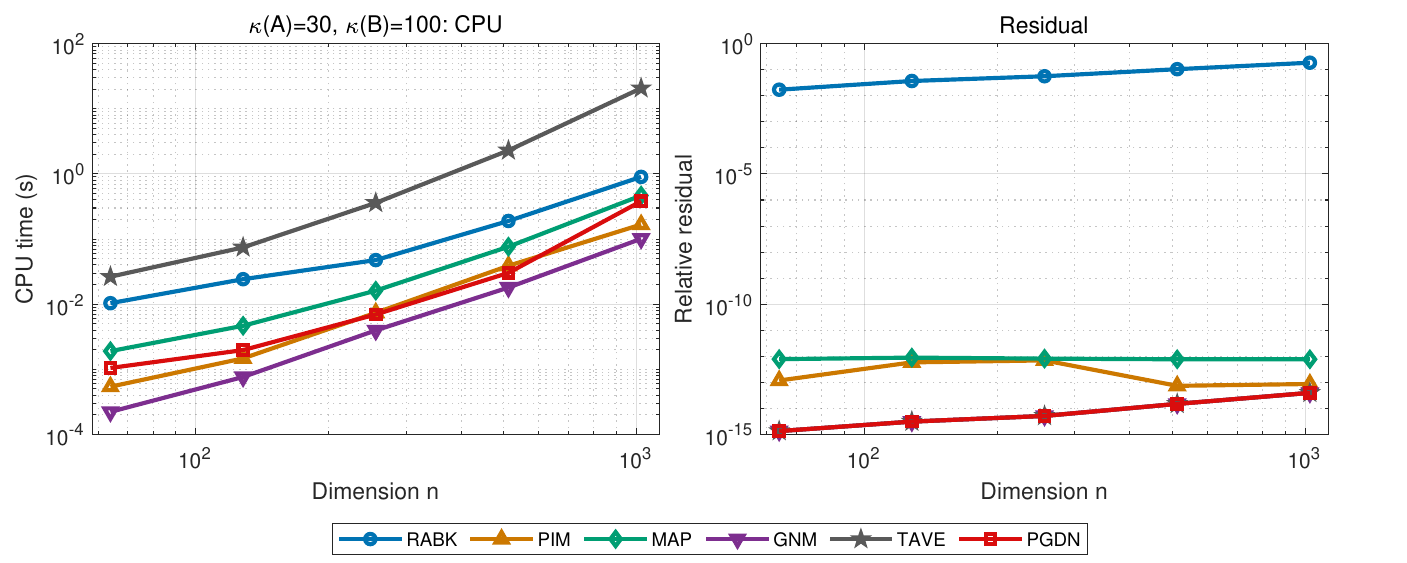}
\caption{$\kappa(A)=30$, $\kappa(B)=100$.}
\end{subfigure}
\caption{Square controlled-spectrum GAVE comparisons.}
\label{fig:compare-square}
\end{figure}

GNM and PGDN consistently attain residuals near floating-point precision.
PIM and MAP generally stop near the requested tolerance. RABK often reaches
4000 iterations before meeting the target, and its residual deteriorates as
dimension or conditioning increases. TAVE is accurate but becomes much more
expensive at $n=1024$. GNM or PIM is faster than PGDN in several settings, so
the results support high accuracy and competitive performance in selected
settings, but not uniform superiority in this case.

\paragraph{Tall controlled-spectrum systems.}
The rectangular experiment fixes $n=64$ and uses
\(
 m\in\{64,128,256,512,1024,2048\},
\)
with five trials per spectral pair. The comparison contains
RABK, MAP, and PGDN because their available implementations directly support
tall matrices. GNM requires a square system. PIM is defined through an
inverse solve with $A$; using MATLAB's rectangular backslash would replace
that step by a least-squares solve and would no longer implement the stated
PIM iteration.

The available SLA implementation~\cite{mangasarian2007absolute} constructs
and solves a dense LP at every iteration and has no enforceable time limit; it
already required more than one minute at $(m,n)=(128,64)$. The available TAVE
implementation likewise performs an LP initialization before its internal
time check. SLA and TAVE are therefore excluded rather than being assigned
misleading nominal timeouts.

\begin{figure}[p]
\centering
\begin{subfigure}{.49\textwidth}
\includegraphics[width=\textwidth]{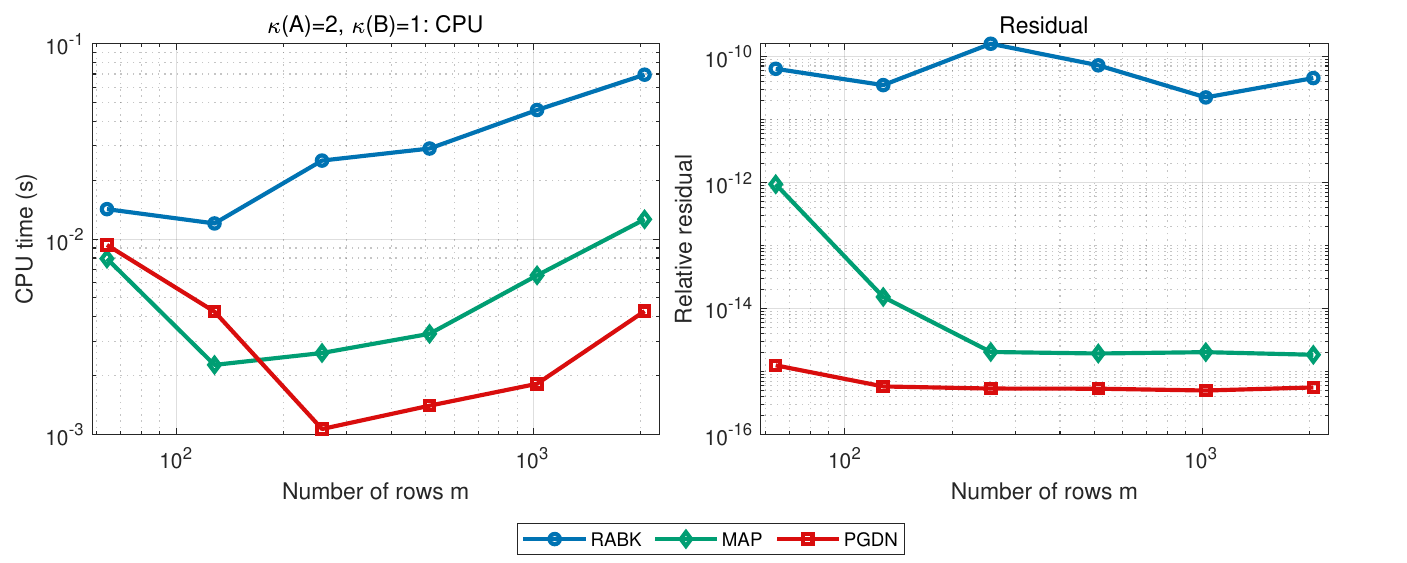}
\caption{$\kappa(A)=2$, $\kappa(B)=1$.}
\end{subfigure}
\begin{subfigure}{.49\textwidth}
\includegraphics[width=\textwidth]{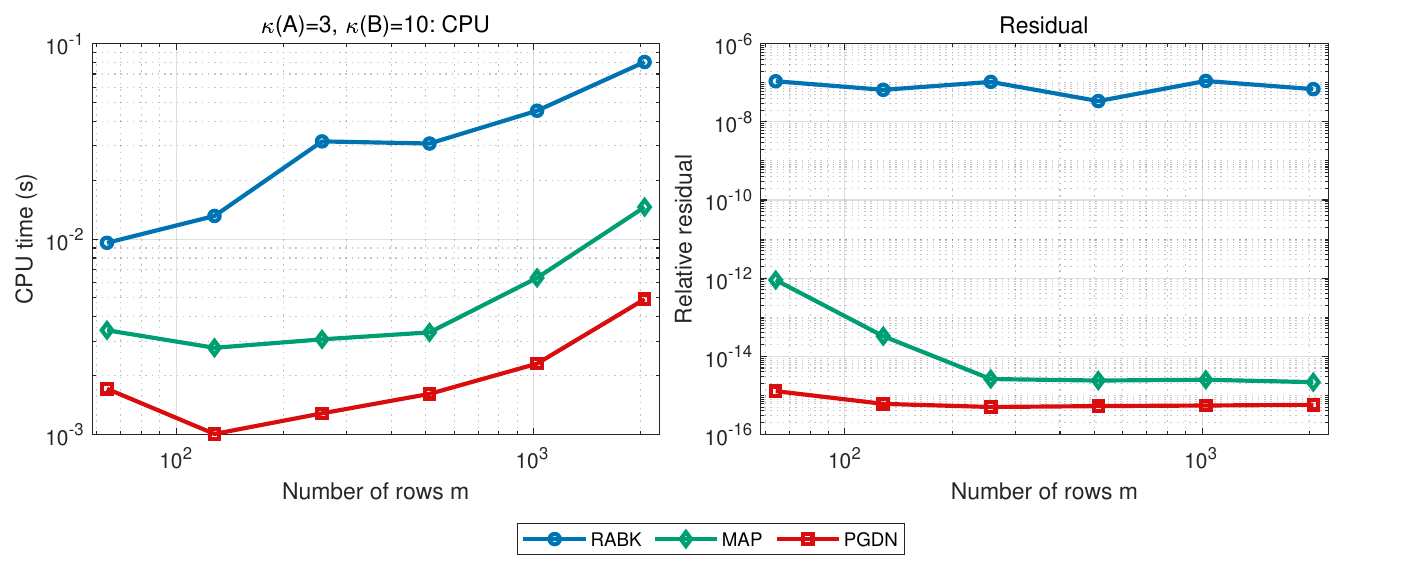}
\caption{$\kappa(A)=3$, $\kappa(B)=10$.}
\end{subfigure}
\begin{subfigure}{.49\textwidth}
\includegraphics[width=\textwidth]{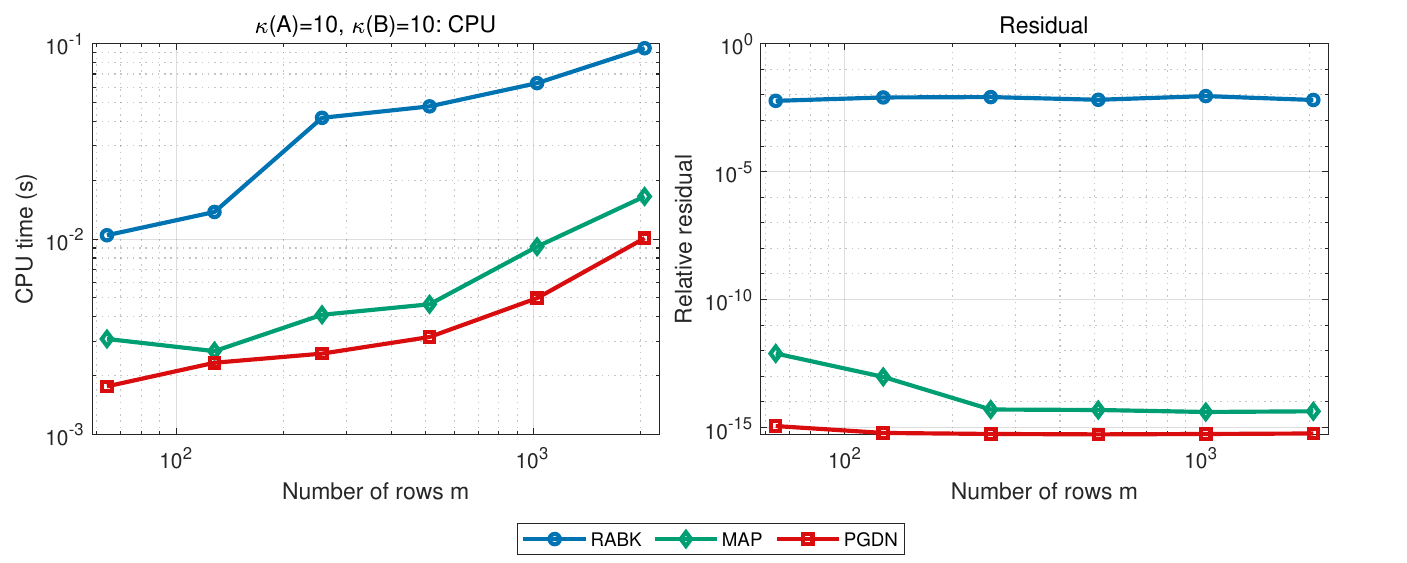}
\caption{$\kappa(A)=10$, $\kappa(B)=10$.}
\end{subfigure}
\begin{subfigure}{.49\textwidth}
\includegraphics[width=\textwidth]{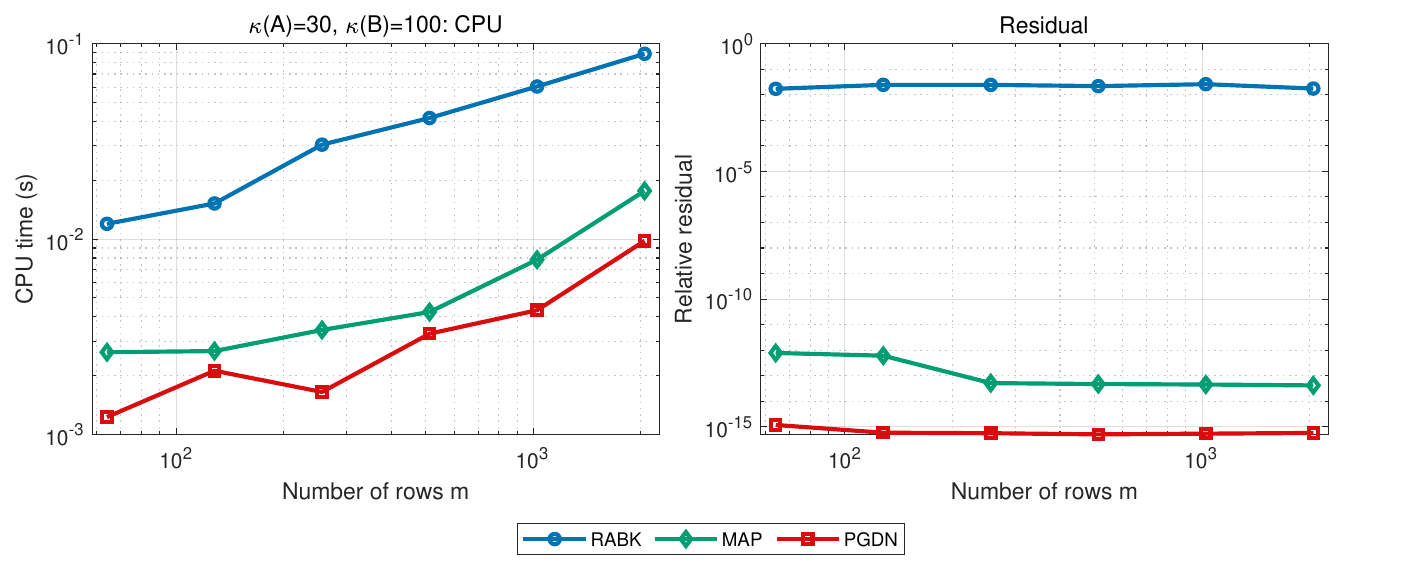}
\caption{$\kappa(A)=30$, $\kappa(B)=100$.}
\end{subfigure}
\caption{Tall controlled-spectrum GAVE comparisons with fixed $n=64$.}
\label{fig:compare-nonsquare}
\end{figure}

PGDN attains median residuals between approximately $4\times10^{-16}$ and
$10^{-15}$. MAP is also accurate, typically between $10^{-15}$ and
$10^{-13}$. RABK reaches its iteration limit and becomes less accurate for
larger or more ill-conditioned instances. At
$(\kappa(A),\kappa(B))=(30,100)$ and $(m,n)=(2048,64)$, the median CPU times
are 0.0098, 0.0176, and 0.0888 seconds for PGDN, MAP, and RABK, respectively;
their median residuals are $5.70\times10^{-16}$, $4.15\times10^{-14}$, and
$1.76\times10^{-2}$.

\paragraph{Large-scale sparse AVE systems.}
Following the sparse generation strategy in
\cite[Section~3]{bello2016global}, we use MATLAB's \texttt{sprand} with
prescribed singular values to generate $A\in\mathbb R^{n\times n}$ with requested
density $0.003$. We sample $x^\star$ and $x^0$ componentwise from
$U(-100,100)$ and set
\(
 b=Ax^\star-|x^\star|.
\)
The singular values are sampled from $[4,160]$, with both endpoints inserted,
so $\sigma_{\min}(A)=4$, $\kappa_2(A)=40$, and
$\|A^{-1}\|_2=1/4<1/3$. We use
$n\in\{1000,2500,5000,10000\}$ and three fixed-seed trials per dimension.

The comparison includes RABK~\cite{xie2025randomized},
PIM~\cite{rohn2014iterative},
Exact-SSN~\cite{mangasarian2009generalized}, Inexact-SSN~\cite{bello2016global},
and PGDN. 
Exact-SSN is the GNM iteration specialized to $B=I$. PIM reuses a sparse LU
factorization, Exact-SSN uses sparse direct solvers, and Inexact-SSN uses
warm-started LSQR with the residual-relative tolerance in
\cite{bello2016global}. The success criterion is
$\|Ax-|x|-b\|_2\le10^{-8}$. MAP and TAVE are omitted because their available
implementations do not preserve the intended large-scale sparse computational
model.

\begin{figure}[htbp]
  \centering
  \includegraphics[width=.7\textwidth]{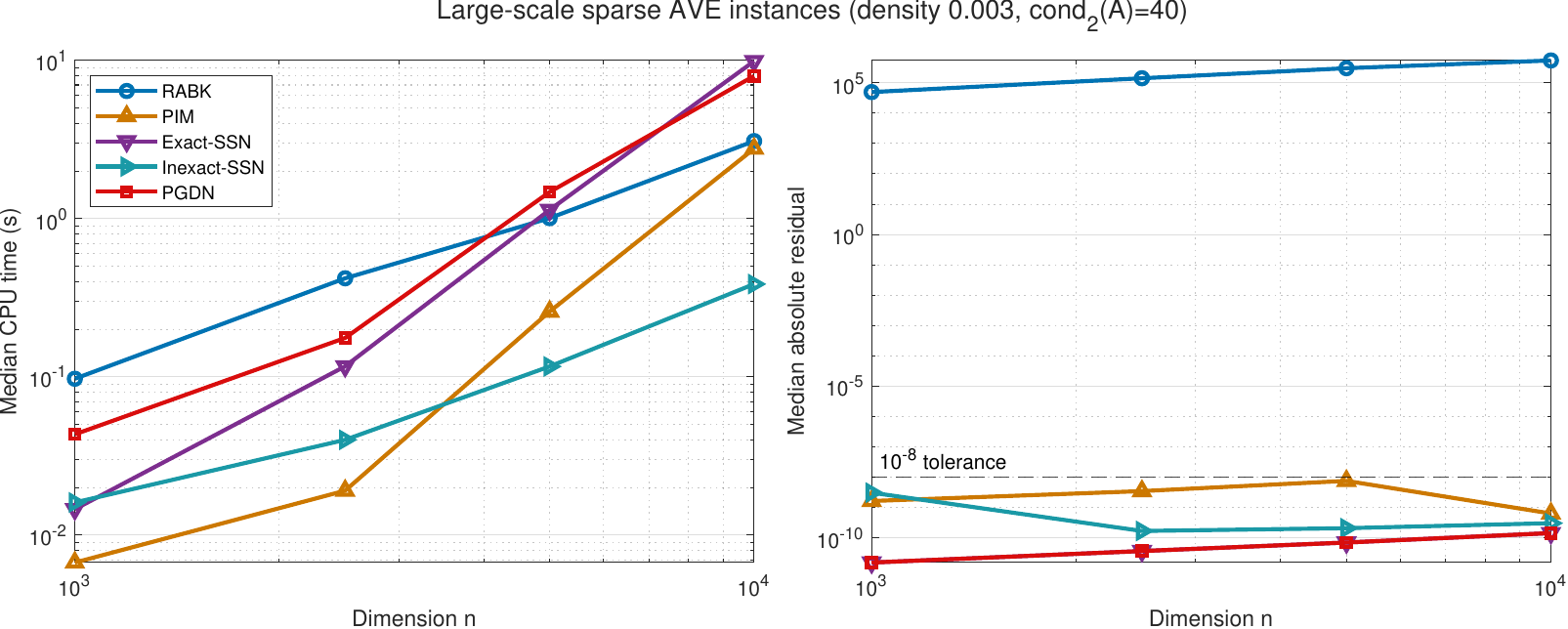}
  \caption{Median CPU time and absolute residual on large-scale sparse AVEs.}
  \label{fig:comparison-large-sparse-ave}
\end{figure}

PIM, Exact-SSN, Inexact-SSN, and PGDN solve every instance. RABK reaches its
4000-update limit on every instance and does not meet the tolerance. At
$n=10000$, Inexact-SSN is fastest, followed by PIM; PGDN and Exact-SSN are
more expensive because their accepted steps involve sparse direct
factorizations. PGDN nevertheless retains a median residual near
$10^{-10}$ in the absolute scale and near floating-point precision in the
relative scale. These results support accuracy and robustness for PGDN on
this controlled sparse family, but not superiority over specialized sparse
iterations.

\subsection{Comparisons with existing LCP methods}

We next solve three LCP benchmarks directly in the form
\[
  z\geq 0,\qquad w=Mz-b\geq 0,\qquad z^{\top}w=0.
\]
These tests assess the projection and
active-set refinement without converting the LCP to a GAVE.

Following \cite[Section~7.2]{alcantara2025global}, we use the following three LCP problems:
\begin{itemize}
\item \textbf{LCP1} \cite[Example 2]{qi2000newlook}: $M$ is tridiagonal with $M_{ii}=4$ for all $i$ and $M_{ij}=-1$ when $|i-j|=1$, while the benchmark right-hand side is the all-ones vector.
\item \textbf{LCP2} \cite[Example 7.1]{kanzow1996some}: $M$ is upper triangular with $M_{ii}=1$ for all $i$ and $M_{ij}=2$ for all $i<j$, and the benchmark right-hand side is again the all-ones vector.
\item \textbf{LCP3} \cite[Example 7.3]{kanzow1996some}: the benchmark right-hand side is sampled componentwise from the uniform distribution on $(-500,500)$, and
\(
  M=A_1^{\top}A_1+A_2+\operatorname{diag}(\eta),
\)
where the entries of $A_1$ and $A_2$ are independently sampled from the uniform distribution on $(-5,5)$, $A_2$ is skew-symmetric, and each entry of $\eta$ is independently sampled from $(0,0.3)$.
\end{itemize}

\begin{figure}[htbp]
  \centering
  \begin{subfigure}{\textwidth}
    \centering
    \includegraphics[width=.85\textwidth]{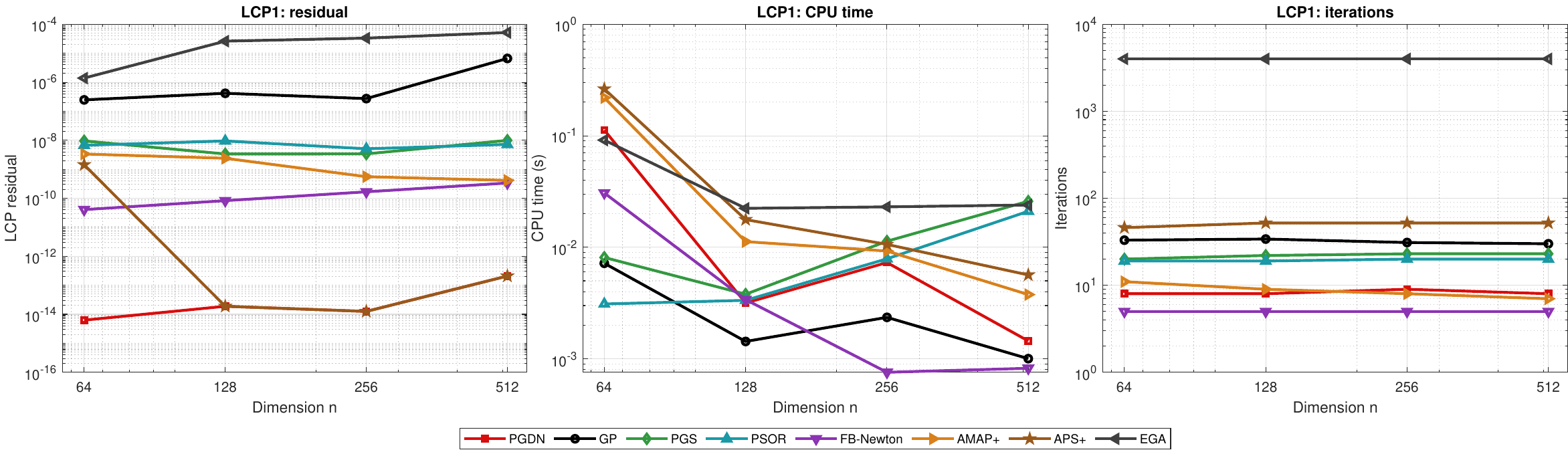}
    \caption{LCP1.}
    \label{fig:direct_lcp1}
  \end{subfigure}\par\smallskip
  \begin{subfigure}{\textwidth}
    \centering
    \includegraphics[width=.85\textwidth]{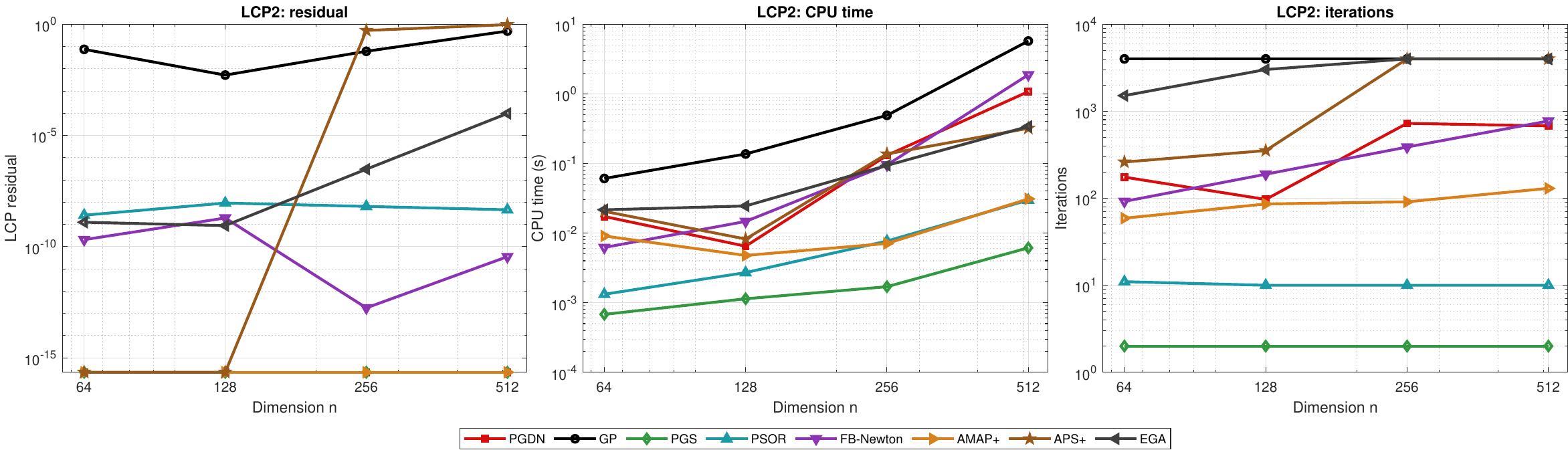}
    \caption{LCP2.}
    \label{fig:direct_lcp2}
  \end{subfigure}\par\smallskip
  \begin{subfigure}{\textwidth}
    \centering
    \includegraphics[width=.85\textwidth]{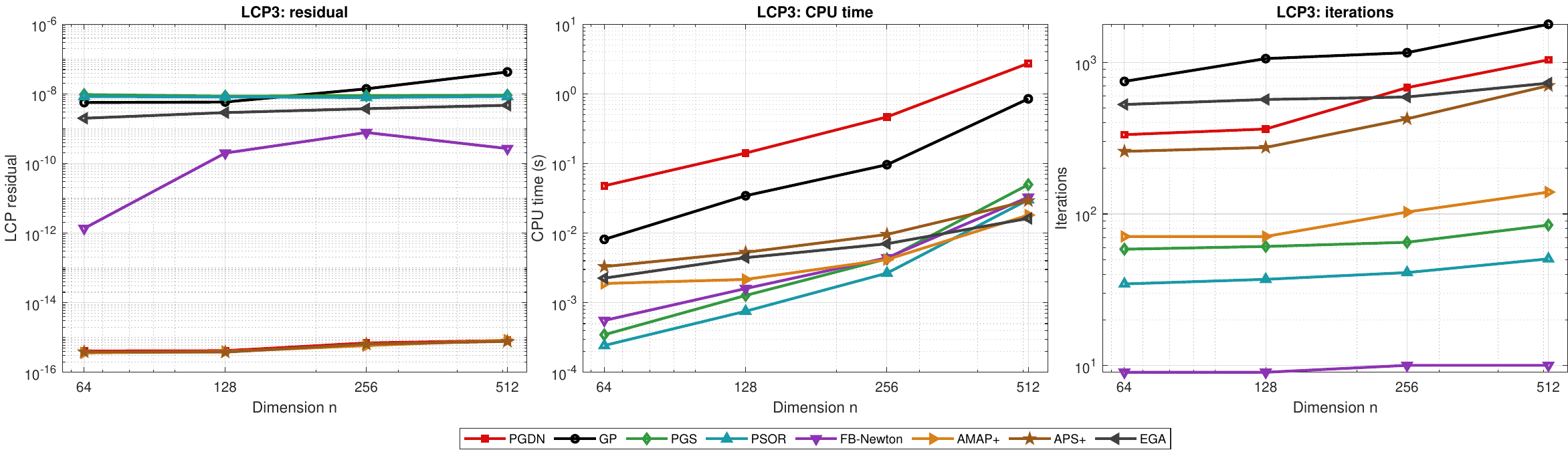}
    \caption{LCP3.}
    \label{fig:direct_lcp3}
  \end{subfigure}
  \caption{Direct solution of the three LCP benchmarks.}
\end{figure}

We compare PGDN with projected Gauss--Seidel (PGS) and projected SOR (PSOR)
\cite{cryer1971solution,cottle1992linear}, a smoothed Fischer--Burmeister
Newton method \cite{kanzow1999jacobian}, and GP
\cite{beck2011linearly}. The latter applies projected gradient with the
backtracking test to our complementarity-constrained
least-squares formulation, without active-set refinement. We also include
AMAP+, APS+, and EGA from \cite{alcantara2025global}, using the authors'
implementation in \texttt{lcp\_safp\_solver-main}. For the affine matrix
$[M,-I]$, AMAP+ uses the metric $Q=(MM^{\top}+I)^{-1}$, whereas APS+ uses
$Q=I$; both employ extrapolation and component identification.

The generated data are initially divided by $s:=\|M\|_1/\sqrt n$. For LCP2,
both normalized quantities are subsequently multiplied by $s$, thereby
recovering the original unscaled benchmark. This positive common scaling
leaves the solution set in $z$ unchanged, while the complementary slack is
scaled by the same factor. We use
\(n\in\{64,128,256,512\}\). LCP1 and LCP2 are deterministic, whereas the
reported LCP3 values are medians over ten independent instances. The stopping
residuals for every method are evaluated with $w:=Mz-b$ recomputed from the
returned $z$. The stopping parameters are set to
\(\texttt{maxIter}=4000\), \(\texttt{tolRes}=10^{-8}\), and
\(\texttt{tolStep}=10^{-10}\). We report CPU time, the natural residual
\[
  \operatorname{res}_{\rm nat}
  :=
  \frac{\|\min\{z,w\}\|_2}{\max\{1,\|b\|_2\}},
\]
and the LCP residual
\[
  \operatorname{res}_{\rm LCP}:=\max\Bigg\{ 
  \frac{\|\min\{z,0\}\|_2}{\max\{1,\|z\|_2\}},
  \frac{\|\min\{w,0\}\|_2}{\max\{1,\|w\|_2\}}, \frac{|z^{\top}w|}{\max\{1,\|z\|_2\|w\|_2\}}
  \Bigg\}. 
\]
The results are shown in Figures~\ref{fig:direct_lcp1}--\ref{fig:direct_lcp3}.

The results depend strongly on the benchmark structure. On LCP1, PGDN needs
only $8$--$9$ outer iterations and attains LCP residuals between
$6.3\times10^{-15}$ and $2.1\times10^{-13}$. FB-Newton converges in five
iterations, while PGS, PSOR, AMAP+, and APS+ also satisfy the prescribed LCP
residual tolerance. In contrast, GP stops at the affine-residual tolerance
but not at the stricter LCP criterion, and EGA reaches the iteration limit.

The triangular LCP2 strongly favors coordinate methods: PGS obtains zero
reported residual in two sweeps, and PSOR takes only $10$--$11$ sweeps. PGDN
and AMAP+ also solve every tested instance, with zero reported residual. The
performance of the two projection metrics differs markedly on this problem:
APS+ solves the cases $n=64$ and $128$, but reaches the iteration limit for
$n=256$ and $512$, whereas AMAP+ remains successful. FB-Newton solves all
four instances but its iteration count grows from $92$ to $773$. GP fails
to reach the requested accuracy within $4000$ iterations, and EGA does so for
the two larger dimensions.

On the random LCP3 instances, PGDN, AMAP+, and APS+ have a $100\%$ success
rate and median LCP residuals near machine precision at every dimension.
AMAP+ and APS+ are substantially faster than PGDN here: at $n=512$, their
median CPU times are $0.023$ and $0.043$ seconds, compared with $3.50$ seconds
for PGDN. PGS, PSOR, FB-Newton, and EGA also solve all instances, although at
larger median residuals ranging from $1.4\times10^{-12}$ to
$9.5\times10^{-9}$. GP reaches
the tolerance on $90\%$, $90\%$, $40\%$, and $10\%$ of the instances for
$n=64,128,256$, and $512$, respectively. Thus, the
active-set refinement in PGDN is effective in recovering highly accurate
solutions, but its linear algebra is not competitive with the specialized
projection and coordinate methods on these random LCPs.

\section{Concluding Remarks}\label{sec:conclusion}
We proposed a safeguarded projected-gradient framework for GAVEs and LCPs
that combines a common first-order iteration with a linear-system refinement
on a selected polyhedral face. We established conditions under which every projected
stationary point is a global solution, together with global convergence
results. Under error-bound conditions, we further proved linear convergence
to the solution set and derived explicit bounds for active-face identification.
Under \eqref{eq:selected_face_refinement_solution}, these bounds yield finite
termination at a zero-residual solution. Permanent activity-vector
identification follows when
exactly one face intersects the solution set. The numerical
experiments demonstrate high final accuracy across the tested problems,
although computational performance depends on the problem structure and the
specialized comparison methods. Future work includes inconsistent GAVEs and
extensions to nonlinear complementarity-constrained problems.

\bibliographystyle{plain}
\bibliography{mybibfile}

\end{document}